\newtheorem{Th}{Theorem}
\newtheorem{Prop}[Th]{Proposition}
\newtheorem{Lm}[Th]{Lemma}
\newtheorem{Co}[Th]{Corollary}
\theoremstyle{definition}
\newtheorem{Def}[Th]{Definition}
\newtheorem{Rem}{Remark}
\newtheorem{Ex}{Example}
\newcommand{\aut}{\mathrm{Aut}}
\newcommand{\reg}{\mathrm{reg}}
\newcommand{\triv}{\mathrm{triv}}
\newcommand{\autf}{\mathrm{Aut}_\mathrm{fin}}
\newcommand{\supp}{\mathrm{supp}}
\newcommand{\tr}{\mathrm{tr}}
\newcommand{\st}{\mathrm{St}^\mathrm{p}}
\newcommand{\stab}{\mathrm{St}}
\newcommand{\Sub}{\mathrm{Sub}}
\newcommand{\lin}{\mathrm{Span}}
\newcommand{\fix}{\mathrm{Fix}}
\newcommand{\rist}{\mathrm{rist}}
\newcommand{\dd}{\mathrm{d}}
\newcommand{\id}{\mathrm{Id}}
\newcommand{\eg}{\text{e.g.\,}}
\newcommand{\ie}{\text{i.e.\;}}
\newcommand{\p}{\mathrm{p}}
\newcommand{\irs}{\mathrm{IRS}}
\newcommand{\eirs}{\mathrm{EIRS}}
\newcommand{\ch}{\mathrm{Char}}
\newcommand{\ich}{\mathrm{IChar}}
\begin{document}
\title{Characters  and  IRS's  on  branch  groups and embeddings  into  hyperfinite factor}
\author{ {\bf Artem Dudko}  \\
                    %Intsitute of Mathematics Polish Academy of Sciences
                    IMPAN, Warsaw, Poland  \\
          adudko@impan.pl \\
         {\bf Rostislav Grigorchuk} \\        Texas A\&M University, College Station, TX, USA  \\      grigorch@math.tamu.edu }

\maketitle
\begin{abstract}
Using the  construction from  \cite{BencsToth17} of  invariant random subgroups  on weakly  branch  groups acting on regular rooted trees we  produce  uncountably  many  indecomposable  characters on  these  groups.
In fact,  we  study  three  types of  characters  coming  from the  action  of a weakly  branch  group  on  a  regular  tree, paying  attention  to their similarities  and  differences. We use obtained results to show that each countable amenable branch  group has uncountably  many pairwise not quasi-equivalent embeddings into Murray-von Neumann hyperfinite factor.
For  the  canonical character associated with a self-similar group and studied in \cite{Grig11} as a self-similar trace we  provide  a  number  of  examples  when  it  is  explicitly  computed.
\end{abstract}

\section{Introduction.}
The  goal  of this  article  is  twofold. First, to continue  the study  of  indecomposable  characters on  groups  acting  non-freely  initiated in \cite{DG15 Diag}. Second, to attract  attention  of mathematical  community  to  the invariant random subgroups, perfectly  non-free  actions,  factor representations  associated  to them,  corresponding  Murray-von Neumann-Krieger  factors,  and  embeddings into hyperfinite II$_1$ factor. We  deal  mostly  with  groups  of  branch  type  (branch  and  weakly  branch  groups) and  consider  only  the  case  of  actions  on  a  $d$-regular  tree  $T_d,  d\geq 2$.

Study  of  dynamics  of  groups acting  on  rooted trees  was  initiated  in  \cite{GNS00} and  splits  into  two  parts depending on the category in which we work:  topological  dynamical  systems  $(G, \partial  T_d)$ or  metric dynamical  systems  $(G, \partial  T_d, \mu)$.  Here $ \partial  T_d$  is a  boundary  of  the  tree  supplied  by  the natural topology  making  it  homeomorphic  to a  Cantor  set. The measure  $\mu$  is the uniform  Bernoulli  measure  on $\partial T_d$. A group  acting  by  automorphisms  on  a  tree at  the  same  time  acts  by  homeomorphisms (in fact by isometries  for a  suitable  ultrametric) on $ \partial  T_d$ preserving  the  probability measure  $\mu$. This  action  could  be  (essentially)  free or be "very  far" from a  free action. For  instance, the latter might mean that  different  points  of  the  boundary have different  stabilizers, thus   satisfying   A.Vershik's  definition  of extreme non-freeness.  Stronger  versions  of  non-freeness,  called  absolute  and  perfect   non-freeness,  were introduced  in \cite{DG15 Diag}  and  will  play an important  role  here.

A  character  $\chi$  on a  group  is a non-negatively  definite  constant  on  conjugacy  classes  and  normalized  by  $\xi(e)=1$  function, where $e\in G$ is the identity element.   Such  functions  constitute  a simplex,  extreme  points  of  which are called  indecomposable characters.  Characters  play an  important  role  for  representation  theory since the Gelfand-Naimark-Segal (GNS, for short) construction  allows  to  associate  with each  character  a unitary  representation.    At  the  same  time,  indecomposable  characters are in a  natural  bijection  with  the  classes of  quasi-equivalence of  finite type factor representations. Observe that quasi-equivalence is a  much  weaker relation  than unitary equivalence \cite{Dix69}.

One of the first and the most studied infinite groups concerning characters is the infinite symmetric group $S(\infty)$ (\ie the group of finitary bijections of the set of positive integers). Its indecomposable characters were described by Thoma in \cite{Thoma}. Later, new methods were developed, and new proofs of Thoma's result were found. In particular, Kerov and Vershik developed asymptotic theory of characters  of the symmetric group \cite{VershikKerov:1981}, \cite{VK81}, and Okunkov and Olshanski developed semigroup approach \cite{Ok99},  \cite{Olshanski:1989} to study representations of infinite symmetric group and other similar groups. This lead to a discovery of remarkable relations between characters and representations of $S(\infty)$ and various branches of math, including Combinatorics, Ergodic Theory, Probability Theory, Random Matrices, and Operator Algebras.

The  class  of  branch  groups  was  introduced  in  \cite{Gr00}  and  attracted  attention  by its connection  to just-infinite  groups  (i.e  infinite  groups  whose  every  proper  quotient is finite)  and  also  by  numerous  examples  of  groups  with  unusual  properties  like  to  be infinite finitely  generated
or  to  have  intermediate  growth  (the  group  $\mathcal G$   from  \cite{Gr00}  for  instance  is  such  group). A  larger  class  of  groups,  the  weakly branch  groups,  important  representative  of  which  is  the  Basilica  group  $\mathcal B$  introduced in  \cite{GrigZuk97},  also  attracted  a  lot  of  attention  and  is an  intermediate  class  between  branch  groups   and classes of micro-supported
groups  invented by  Rubin \cite{Rubin} (without using  this  name).  The  latter  case  includes  ample  groups  introduced  by  Krieger  \cite{Krieger:1980}, particular  case  of  which  are  topological full groups associated  with  minimal  Cantor  systems studied   by Giordano, Putnam, and Skau \cite{giordano_putnam_skau:1995}, \cite{giordano_putnam_skau:1999}, and  other  researchers.

A special feature  for  groups from the mentioned  classes  is that  for  any  open  subset  $X$ of  the  space  on  which  group  acts  there  is   a  non identity  element  $g$  of the  group  whose support  is  inside  $X$.  Thus,  with  each such  $X$  one  can  associate  a  nontrivial  subgroup $G_X$ consisting  of  elements  with  support  in  $X$,  which  in the  theory  of  branch  groups  is customary  to  call a rigid  stabilizer of  $X$.

Using  the absolute  nonfreenes  of the action  of  branch  groups  on the  boundary  of  the  tree  and  diagonal  actions the  authors  in  \cite{DG15 Diag} constructed  a  countable  family  of  indecomposable  characters.  In  this  article  we  construct  $2^{\aleph_0}$  of  such  characters.   We  use the  idea  explored  by Bencs and T\'{o}th \cite{BencsToth17} consisting  in  taking  a closure $\overline{G}$ of  a  group  $G$  acting  on  a  rooted  tree  $T_d$  in the  group  $\aut(T_d)$ of  all  automorphisms  (the  latter  is a  totally  disconnected compact  group, \ie a profinite group)  and  consider   $\overline{G}$-orbits  of closed  subsets of $\partial T_d$  together  with  the  probability  measure  on it  induced  by a  normalized  Haar  measure  on $\overline{G}$.  This  allowed  them  to build uncountably  many ergodic Invariant  Random Subgroups (IRS's)  on any   countable  weakly  branch group $G$.
We  explore  the  same idea in  combination  with non-freenes  arguments  and use  of  two  type  of  stabilizers  of  sets:  set stabilizer and pointwise  stabilizer. In fact, our  constructions  lead  to  three  types  of  characters  and  we  discuss  the  similarities  and  differences  between  them.

The obtained results  (combined  with  the  results  from  \cite{Connes:1976} and \cite{DG14})  have interesting  consequences related to the Murray-von Neumann hyperfinite factor $\mathfrak R.$%  and $C^{\ast}$-algebras  generated  by Koopman,  groupoid and  quasiregular  representations.
The  question   which  countable  groups  embeds  into  the  unitary  group $U(\mathfrak R)$ of  $\mathfrak R$  attracted  attention of  researchers.  It  is  known due  to the result  of A.Connes \cite{Connes:1976},  that  amenable  groups  embed.  Also  residually  amenable  groups  embed  and  any group that embeds has to be hyperlinear in the sense of R\u adulescu  \cite{Radulescu:2000} (see more on this in \cite{Pestov:2008}).
On  the  other  hand  any non amenable group with only 2 characters cannot embed. This includes  $PSL_n(\mathbb Z),n  \geq  3$   by Bekka \cite{Bekka:2007}, $SL_2(\mathbb Q)$ by Peterson-Thom \cite{PetersonThom:2016}, and any non amenable group from the commutators of Higman-Thompson groups by Dudko-Medynets \cite{DM14 Thompson}. Now  when  embedding  is  possible  a natural  question  could  be  how  many  different  embeddings  exists?

The  group  $\aut(\mathfrak R)$  of  automorphisms  of  $\mathfrak R$ preserves  $U(\mathfrak R)$  and  naturally  acts on  the  set  of  its  subgroups.  We  say  that  two  embeddings $\alpha$ and  $\beta$   of  $G$  are different  if  the  corresponding  orbits  are  different.   Restriction  of the canonical  trace  on  $\mathfrak R$ on  an embedding of $G$  gives  a  character on  $G$.  Hence,  if  $\alpha$ and  $\beta$ lead to  the  different  characters,  the  groups $\alpha(G)$ and $\beta(G)$ belong  to  different orbits.

 For instance,  taking  any  amenable  branch  group $G$ using  our  results we  get $2^{\aleph_0}$ embeddings  of  $G$  into   the  unitary  group $U( \mathfrak R)$ of  $\mathfrak R$. These embeddings are associated to characters constructed in Theorem \ref{ThMain}, part 2.
 Notice that the class of amenable weakly branch groups include \eg the group of intermediate growth $\mathcal G$ constructed by the second author,  the  Basilica $\mathcal B$ and Hanoi  Towers  Group  $\mathcal{H}^{(3)}$. Using  the  fact  the   action  of  $G$  on  the  $\bar{G}$-orbit  of a  closed subset  $C \subset  \partial T_d$ that  belongs  to  the  class  $\mathfrak C_2$ from Theorem \ref{ThMain} is  perfectly  non-free,  Proposition  \ref{PropEqIffIrred}  borrowed  from  \cite{DG14},  and  the result  of  A.Connes \cite{Connes:1976} we  get  that the  groupoid  representation $\pi_C$  associated  with  this  action  generates a  hyperfinite  factor $\mathfrak R$  and  different  sets  from   $\mathfrak C_2$  lead  to    embeddings that  belong  to  different  orbits  of  action  of  $\aut(\mathfrak R)$ on subgroups  of  $U(\mathfrak  R)$. More  on this  at  the  Section \ref{SecEmbeddings}.

The paper is organized as follows. In Section \ref{SecPreliminaries} we give necessary preliminaries on the objects involved in the paper, including groups acting on rooted trees, invariant random subgroups, characters, non-free actions, and factor representations. In Subsection \ref{SubsecMain} we present the main results of the paper. In Sections \ref{SecPsiC}-\ref{SubsecInjectivity} we give proofs of the results on characters on branch and weakly branch groups associated to invariant random subgroups. 
Section  \ref{SecCharVal} contains  computations  of  values  of  self-similar  trace  on a  number  of  branch  and  weakly  branch  groups.  In Section \ref{SecEmbeddings}  we show that  any branch group admits $2^{\aleph_0}$ different embeddings  into   the  unitary  group $U( \mathfrak R)$ of  the Murray-von Neumann hyperfinite II$_1$ factor $\mathfrak R$. 

\subsection*{Acknowledgement}
The first author acknowledges the funding by Long-term program of support of the Ukrainian research teams at the Polish Academy of Sciences carried out in collaboration with the U.S. National Academy of Sciences with the financial support of external partners. The first author was partially supported by National Science
Centre, Poland, Grant OPUS21 "Holomorphic dynamics, fractals, thermodynamic formalism",
2021/41/B/ST1/00461. The second author acknowledges the support by the Deutsche Forschungsgemeinschaft (DFG, German Research
   Foundation) -- SFB\,-TRR 358/1 2023 -- 491392403,  Humboldt  Foundation  and  University  of  Bielefeld.
   Also, both authors are grateful to the University of Geneva for their  support (Swiss NSF grant 200020-200400).
   
   The authors are very thankful to Srivatsav Kunnawalkam Elayavalli and  Florin R\u adulescu for important remarks and useful comments.

\section{Preliminaries.}\label{SecPreliminaries}
\subsection{Branch and weakly branch groups.}\label{SubsecWBgroups} Let us recall the notions of a regular rooted tree and a weakly branch group. We refer the reader to \cite{Grig11}, \cite{GNS00} and  \cite{Nekr} for the details. Throughout the paper we assume that $d\in\mathbb N$ and $d\geqslant 2$. Let $\mathcal F$ be a finite alphabet of $d$ letters. The vertices of a $d$-regular rooted tree $T_d$ can be identified with the finite words over $\mathcal F$ such that the empty word is the root of the tree. A vertex $v$ is connected to a vertex $w$ of $T_d$ if and only if $v=wa$ or $w=va$ for some letter $a\in\mathcal F$. The words of length $n\in\mathbb N\cup\{0\}$ constitute the $n$th level $V_n$ of the vertex set of $T_d$. Thus, for every $n\geqslant 1$ every vertex $v$ from $V_n$ is connected to one vertex from $V_{n-1}$ and $d$ vertices from $V_{n+1}$.

By definition, the boundary of $T_d$ is the set $\partial T_d$ of all infinite (from the right) words over $\mathcal F$. We equip $\partial T_d$ with the metric given by $\dd(x,y)=d^{-l(x,y)}$ for any $x,y\in\partial T_d$ where $l=l(x,y)$ is the maximal number such that $x_i=y_i$ for $0\leqslant i\leqslant l$. The corresponding topology coincides with the product topology and makes $\partial T_d$  a Cantor set. The group of automorphisms of $T_d$ is denoted by $\aut(T_d)$. Since the root of $T_d$ is the only vertex of degree $d$, each element $g\in\aut(T_d)$ preserves the levels $V_n$ of $T_d$ and therefore defines a continuous transformation on $\partial T_d$.

Given a vertex $v\in V_n$ denote by $T_v$ the subtree of $T_d$ consisting of all vertices of the form $vu$ (here $u$ is any finite word) and the edges joining them. Thus, for every $v$ the tree $T_v$ is naturally isomorphic to $T_d$. The space $\partial T_d$ is equipped with the unique $\aut(T_d)$-invariant Borel probability measure $\mu$. We have $\mu(\partial T_v)=d^{-n}$ for every $n\geqslant 0$ and every $v\in V_n$.

For a group $G<\aut(T_d)$ the rigid stabilizer of a vertex $v\in T_d$ is the subgroup $\rist_G(v)<G$ consisting of all elements $g\in G$ acting trivially outside of $T_v$. The rigid stabilizer of the level $n\geqslant 0$ of $T_d$ is the subgroup generated by $\rist_G(v),v\in V_n$, and is equal to an inner direct product
$$\rist_G(n)=\prod\limits_{v \in V_n}\rist_G(v),$$ since the subgroups $\rist_G(v)$ and $\rist_G(w)$ commute for any $v\neq w, v,w\in V_n$.
\begin{Def}\label{DefBranch} Let  $G<\aut(T_d)$ be an infinite group acting transitively on $V_n$ for each $n\geqslant 0$. Then $G$ is called \emph{branch} if $\rist_G(n)$ has finite index in $G$ for every $n\geqslant 0$.  $G$ is called \emph{weakly branch} if $\rist_G(v)$ is nontrivial for every vertex $v$ of $T_d$.
\end{Def}\noindent Notice that every branch group is weakly branch. For any weakly branch group $G$  $\rist_G(v)$ is conjugate to $\rist_G(w)$ for every $v,w\in V_n,n\in\mathbb N,$ and $\rist_G(v)$ is non-trivial for all $v\in V$. The latter implies that $\rist_G(v)$ is infinite for all $v\in V$.

For any $n\in\mathbb N\cup\{0\}$ and any vertex $v\in V_n$ denote by $S_v$ the finite symmetric group of all permutations of the set $V_{n+1}\cap T_v$ of vertices from $V_{n+1}$ connected by an edge to $v$. The group $S_v$ can be naturally viewed as a subgroup of $\aut(T_d)$. An important example of a countable weakly branch group is the group $\aut_{fin}(T_d)$ of all finitary automorphisms of $T_d$ generated by all subgroups $S_v,v\in T_d$.

\subsection{Invariant random subgroups.}\label{SubsecIRS}
 For a countable group $G$ denote by $\Sub(G)$ the space of all subgroups of $G$ endowed with the product topology from $\{0,1\}^G$. Every element $g\in G$ acts on $\Sub(G)$ by conjugation: $g(H)=gHg^{-1}$ for $H\in \Sub(G)$.
\begin{Def}\label{DefIRS} An \emph{invariant random subgroup} (IRS for short) of a countable group $G$ is a $G$-invariant Borel probability measure on $\Sub(G)$.
\end{Def}
\noindent
Invariant random subgroup is a natural generalization of a normal subgroup. Indeed, for any normal subgroup $H\lhd G$ of a group $G$ the delta-measure $\delta_H$ supported at $H$ is an IRS.
Invariant random subgroups arise naturally from probabity measure preserving actions. Namely, let $G$ act on a Lebesgue probability space $(X,\mu)$ by measure-preserving transformations. Consider that map
$$\stab:X\to\Sub(G),\;\;x\to\stab(x)=\{g\in G:gx=x\}.$$ The push-forward measure $\stab_*\mu$ is an IRS of $G$.

In \cite{BencsToth17}, F. Bencs and L. T\'oth constructed for every countable weakly branch group $G$ a continuum of IRS's on $G$. We briefly describe their construction. Equip $\aut(T_d)$ with the topology generated by the sets $$U_n(g)=\{h\in\aut(T_d):h(v)=g(v)\;\;\text{for any}\;\;v\in V_n\},$$ where $n\in\mathbb N,g\in\aut(T_d)$. It is not hard to see that with this topology $\aut(T_d)$ is compact and totally disconnected. Hence, $\aut(T_d)$ is a profinite group \cite{Gr00}. Given a group $G<\aut(T)$ its closure $\overline{G}<\aut(T)$ is a compact topological subgroup, therefore, $\overline{G}$ admits a unique Haar probability measure $\lambda=\lambda_{\overline G}$. Moreover, the action of $G$ on $(\overline{C},\lambda)$ by left multiplications is ergodic.

 Further, the Hausdorff distance between two subsets $C_1,C_2\partial T_d$ is given by:
  \begin{equation}\label{EqHausdorffDistance} \dd_H(C_1,C_2)=\max\left\{\sup_{x\in C_1}\dd(x,C_2),\;\sup_{y\in C_2}\dd(y,C_1)\right\}.
  \end{equation} We equip the family $\mathcal C$ of all closed subsets of $\partial T_d$ with the Hausdorff metric and the corresponding  topology.The group $\overline{G}$ acts on $\mathcal C$ by translations $B\to g(B)$ for $B\in\mathcal C,g\in\overline{G}$. This action $\overline G\times \mathcal C\to\mathcal C$ is continuous in both coordinates (see the proof of Lemma 2.3 in \cite{BencsToth17}). For $C\in\mathcal C$ denote its $\overline{G}$-orbit in $\mathcal C$ by
$[C]$. Equip $[C]$ with the push-forward $\lambda_{[C]}$ of the measure $\lambda$ via the map $\overline G\to [C],g\to gC$. The measure $\lambda_{[C]}$ is ergodic with respect to the action of $G$ as a push-forward of an ergodic measure.
\begin{Rem} If $C=\partial T_v$ for some vertex $v\in V_n,n\in\mathbb N$, then the action of $G$ on $([C],\lambda_{[C]})$ is isomorphic to the action of $G$ on the finite set $Gv\subset V_n$ equipped with the uniform probability measure. If $C=\{x\}$ for some point $x\in\partial T$, then the action of $G$ on $([C],\lambda_{[C]})$ is isomorphic to the action of $G$ on $\overline G x\subset \partial T$ equipped with the $G$-invariant probability measure. In general, the dynamical system $(G,[C],\lambda_{[C]})$ has a more complicated structure.
\end{Rem}

For a set $B\subset\partial T_d$ denote by $$\st(B)=\{g\in G:gx=x\;\;\text{for all}\;\;x\in B\}$$ its pointwise stabilizer in $G$. Consider the map \begin{equation}\label{EqPsi'}\st:\mathcal C\to\Sub(G),\;B\to\st(B)\;\;\text{for}\;\;B\in\mathcal C.\end{equation} Notice that Lemma 3.4 from \cite{BencsToth17} implies that for a weakly branch group the map $\st$ is an injection. Recall that $\mathcal C$ is equipped with the Hausdorff metric and the corresponding topology, and $\Sub(G)$ is equipped with the product topology from $\{0,1\}^G$.
\begin{Lm}\label{LmStpBorel} For any group $G$ acting on a $d$-regular rooted tree $T_d$ the map $\st:\mathcal C\to\Sub(G)$ is Borel.
\end{Lm}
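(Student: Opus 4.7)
The plan is to reduce Borel measurability of $\st$ to Borel measurability of each coordinate function. Since $G$ is countable and the product topology on $\Sub(G)\subset\{0,1\}^G$ is generated by the cylinder subbase $\{H:g\in H\}$ and its complement, it suffices to show that for every fixed $g\in G$, the set
$$A_g=\st^{-1}\bigl(\{H\in\Sub(G):g\in H\}\bigr)=\{C\in\mathcal C:gx=x\text{ for all }x\in C\}$$
is a Borel subset of $\mathcal C$ (equipped with the Hausdorff metric topology).

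Next, I would identify $A_g$ as the collection of closed sets contained in the fixed-point set of $g$. Concretely, set $\fix(g)=\{x\in\partial T_d:gx=x\}$. Because $g$ acts by homeomorphisms on the compact space $\partial T_d$, the set $\fix(g)$ is closed, and clearly
$$A_g=\{C\in\mathcal C:C\subseteq\fix(g)\}.$$

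The key step is the observation that, for any closed set $F\subseteq\partial T_d$, the collection $\{C\in\mathcal C:C\subseteq F\}$ is closed in the Hausdorff topology on $\mathcal C$. Indeed, if $C_n\to C$ in Hausdorff metric and each $C_n\subseteq F$, then any $x\in C$ is the limit of some sequence $x_n\in C_n\subseteq F$, and closedness of $F$ forces $x\in F$. Applying this with $F=\fix(g)$ shows that $A_g$ is closed, hence Borel, in $\mathcal C$. Since this holds for every $g\in G$ and $G$ is countable, the map $\st$ is Borel.

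I do not expect any real obstacle here; the argument is a short exercise in hyperspace topology. The only point to be mildly careful about is that $\mathcal C$ is taken with the Hausdorff metric (so one must not confuse it with, say, the Fell or Vietoris topology on a locally compact noncompact space), but since $\partial T_d$ is compact all the reasonable hyperspace topologies coincide and $\mathcal C$ is itself compact metric. One should also note the trivial case $C=\emptyset$, for which $\st(\emptyset)=G$ and $\emptyset\in A_g$ for all $g$, consistent with the formula above.
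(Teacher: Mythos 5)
Your proof is correct, but it takes a different route from the paper's. The paper approximates $\st$ by the compositions $\st\circ p_n$, where $p_n(C)$ is the union of the level-$n$ cylinders meeting $C$: each $p_n$ is locally constant, hence $\st\circ p_n$ is continuous, and the paper concludes by asserting that $\st\circ p_n\to\st$ pointwise, so that $\st$ is a pointwise limit of Borel maps. You instead check measurability coordinatewise: for each $g\in G$ the preimage of the subbasic cylinder $\{H:g\in H\}$ is $\{C\in\mathcal C:C\subseteq\fix(g)\}$, which is closed in the Hausdorff topology, and countability of $G$ finishes the job. Your argument is shorter and, notably, avoids a genuinely delicate point in the paper's approach: since $p_n(C)\supseteq C$, one has $\st(p_n(C))\subseteq\st(C)$, and an element $g$ that fixes $C$ pointwise but acts nontrivially on every neighbourhood of $C$ (e.g.\ $C$ a single fixed point of $g$ at which $g$ is not locally trivial) lies in $\st(C)$ but in no $\st(p_n(C))$, so the claimed pointwise convergence $\st\circ p_n\to\st$ can fail. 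What the paper's approach would buy, if repaired (for instance by observing that each coordinate $\mathbbm 1_{\st(\cdot)}(g)$ is the decreasing limit over $n$ of the upper semicontinuous functions $C\mapsto\mathbbm 1_{\{g|_{p_n(C)\cap\fix(g)^c}=\varnothing\}}$ — essentially your closedness observation in disguise), is an explicit finite-resolution approximation scheme; your direct hyperspace argument is the cleaner proof of the lemma as stated.
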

\begin{proof} For $n\in\mathbb N$ introduce the map $$p_n:\mathcal C\to\mathcal C,\;\;p_n(C)=\bigcup\limits_{v\in V_n,\partial T_v\cap C\neq\varnothing}\partial T_v.$$ One has $p_n(C_1)=p_n(C_2)$ whenever $\dd_H(C_1,C_2)<d^{-n}$ (see \eqref{EqHausdorffDistance}). Therefore, the maps $p_n$ are continuous in the Hausdorff topology. In addition, the sequence $\{p_n\}_{n\in\mathbb N}$  converge pointwise to the identity map on $\mathcal C$ when $n\to\infty$. The image $p_n(\mathcal C)$ is finite, and therefore the restriction $\st|_{p_n(\mathcal C)}$ and the composition $\st\circ\p_n$ are continuous maps. Since $\st\circ p_n$ converge pointwise to $\st$, the map $\st:\mathcal C\to\mathcal \Sub(G)$ is Borel.
\end{proof}
\begin{Co} The push-forward measure \begin{equation}\label{EqMupC}\mu^{\mathrm p}_{[C]}=\st_*\lambda_{[C]}\end{equation} is an IRS of $G$.\end{Co}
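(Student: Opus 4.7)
The plan is to verify the two defining properties of an IRS: that $\mu^{\mathrm p}_{[C]}$ is a Borel probability measure on $\Sub(G)$, and that it is invariant under the conjugation action of $G$ on $\Sub(G)$.

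For the first property, the measurability is immediate from the preceding lemma: since $\st:\mathcal C\to\Sub(G)$ is Borel and $\lambda_{[C]}$ is a Borel probability measure on $[C]\subset\mathcal C$, the push-forward $\st_*\lambda_{[C]}$ is a Borel probability measure on $\Sub(G)$. So the content of the corollary is really the $G$-invariance.

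For invariance, I would first record the key equivariance identity
\[
\st(gB)=g\,\st(B)\,g^{-1}\qquad\text{for every }g\in G\text{ and every }B\in\mathcal C,
\]
which is a one-line verification: $h$ fixes every point of $gB$ pointwise iff $g^{-1}hg$ fixes every point of $B$ pointwise. Thus the map $\st$ intertwines the translation action of $G$ on $\mathcal C$ with the conjugation action of $G$ on $\Sub(G)$. Next I would note that $\lambda_{[C]}$ is itself $G$-invariant under translations: the Haar measure $\lambda$ on the profinite group $\overline{G}$ is left-invariant, hence invariant under left multiplication by every $g\in G\subset\overline{G}$, and the map $\overline{G}\to[C]$, $h\mapsto hC$, used to define $\lambda_{[C]}$, is $G$-equivariant for left translation. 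Combining these two facts, for any Borel set $A\subset\Sub(G)$ and any $g\in G$,
\[
(\st_*\lambda_{[C]})(gAg^{-1})=\lambda_{[C]}\bigl(\st^{-1}(gAg^{-1})\bigr)=\lambda_{[C]}\bigl(g\cdot\st^{-1}(A)\bigr)=\lambda_{[C]}\bigl(\st^{-1}(A)\bigr)=(\st_*\lambda_{[C]})(A),
\]
which is precisely the $G$-invariance required in Definition \ref{DefIRS}.

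There is no real obstacle here; the only technical point worth double-checking is that one is entitled to use $G$-invariance (rather than only $\overline{G}$-invariance) of $\lambda_{[C]}$, but this is automatic since $G$ sits inside $\overline{G}$ and Haar measure is left-invariant under the whole group. Thus the corollary reduces to the measurability statement of Lemma \ref{LmStpBorel} together with the trivial equivariance of the pointwise stabilizer map.
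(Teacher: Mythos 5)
Your proof is correct and follows exactly the route the paper intends: the corollary is stated without proof as an immediate consequence of Lemma \ref{LmStpBorel}, with the Borel measurability supplied by that lemma and the $G$-invariance following from the equivariance $\st(gB)=g\,\st(B)\,g^{-1}$ together with the translation-invariance of $\lambda_{[C]}$ inherited from the Haar measure on $\overline{G}$. Your write-up simply makes explicit the details the authors left to the reader.
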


 Observe also that for a closed subset $C\subset \partial T$ one can associate to the action of $G$ on $([C],\lambda_{[C]})$ another IRS denoted by $\mu_{[C]}$ using the stabilizer map
\begin{equation}\label{EqMuC}\stab:[C]\to \Sub(G),\;\;B\to \stab(B)=\{g\in G:gB=B\},\;\;\mu_{[C]}=\stab_*\lambda_{[C]}.\end{equation}
The IRS's $\mu_{[C]}$ and $\mu_{[C]}^\p$ are ergodic with respect to the action of $G$ since they are push-forwards of an ergodic measure.
In general, $\mu_{[C]}$ and $\mu^{\mathrm p}_{[C]}$ don't need to coincide. For example, if $C\subset\partial T_v$ for some $v$ then $\stab_G(C)\supset\rist_G(v)$, but $\st_G(C)\cap \rist_G(v)=\{e\}$ (here $e$ is the identity element). In this case $\mu_{[C]}\neq\mu^{\mathrm{p}}_{[C]}$. For simplicity, any closed set which is not open we call \emph{clonopen}.
 \begin{Th}[Bencs-T\'oth, \cite{BencsToth17}, Section 3.2]\label{ThBT} For any weakly branch group $G$ and any clonopen subset $C$ of $\partial T_d$ the IRS $\mu^{\mathrm p}_{[C]}$ is ergodic and continuous. Moreover, if $C_1,C_2$ are two clonopen subsets of $\partial T_d$ and  $[C_1]\neq[C_2]$ then $\mu^{\mathrm p}_{[C_1]}$ and $\mu^{\mathrm p}_{[C_2]}$ are distinct.
\end{Th}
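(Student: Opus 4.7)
I would split the proof into three parts matching the three assertions: ergodicity, continuity (atomlessness), and distinctness of the $\mu^{\mathrm p}_{[C]}$.

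Ergodicity is the cheapest step. Note that the map $\st:\mathcal C\to\Sub(G)$ is Borel (Lemma \ref{LmStpBorel}) and $G$-equivariant, since $\st(gB)=g\,\st(B)\,g^{-1}$ for $g\in G, B\in\mathcal C$. The measure $\mu^{\mathrm p}_{[C]}=\st_*\lambda_{[C]}$ is therefore the push-forward of an ergodic $G$-invariant measure under a $G$-equivariant Borel map, so it is ergodic for the conjugation action of $G$ on $\Sub(G)$.

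For continuity I would invoke the injectivity of $\st$ on $\mathcal C$ for weakly branch $G$ (Lemma 3.4 of \cite{BencsToth17}). Injectivity implies that $\mu^{\mathrm p}_{[C]}$ has an atom iff $\lambda_{[C]}$ has an atom on $[C]$. Since $\lambda_{[C]}$ is $\overline{G}$-invariant and $[C]$ is a single $\overline{G}$-orbit, any atom would have a mass equal to the mass of every other point of $[C]$, forcing $[C]$ to be finite; equivalently the setwise stabilizer $\stab_{\overline G}(C)$ would be a closed subgroup of finite index, hence an open subgroup of $\overline G$, and would therefore contain the level stabilizer $U_n=\{h\in\overline G:h|_{V_n}=\id\}$ for some $n$. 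I would now use clonopenness to derive a contradiction. Since $C$ is not open, there exists $x\in C$ that is not an interior point of $C$, and hence for every sufficiently large $n$ the vertex $v\in V_n$ with $x\in\partial T_v$ satisfies $\varnothing\neq C\cap\partial T_v\neq\partial T_v$. Because $G$ is weakly branch, $\rist_G(v)$ is an infinite group acting on $\partial T_v$, and its closure in $\aut(T_v)$ sits inside $U_n$. Choosing an element $h\in\overline{\rist_G(v)}\subset U_n$ that moves some point of $C\cap\partial T_v$ to a point of $\partial T_v\setminus C$ (possible because the orbit of any point of $\partial T_v$ under this infinite closed subgroup of $\aut(T_v)$ cannot be contained in a fixed proper closed subset stabilized setwise) one obtains $hC\neq C$, contradicting $U_n\subset\stab_{\overline G}(C)$.

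Distinctness follows cleanly from the injectivity of $\st$. If $[C_1]\neq[C_2]$, then $[C_1]\cap[C_2]=\varnothing$, and by injectivity $\st([C_1])\cap\st([C_2])=\varnothing$. By the Lusin--Souslin theorem, $\st([C_i])$ is Borel as the injective Borel image of the Borel set $[C_i]\subset\mathcal C$. Since $\mu^{\mathrm p}_{[C_i]}$ has full mass on $\st([C_i])$, the two measures are mutually singular, hence distinct.

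The principal obstacle I anticipate is the contradiction step in the continuity argument: one must show that the weakly branch hypothesis is strong enough to guarantee that $\overline{\rist_G(v)}$ acts on $\partial T_v$ with no proper closed invariant subset that could coincide with the local trace $C\cap\partial T_v$. A clean formulation would be a lemma to the effect that, for weakly branch $G$, for every vertex $v$ and every clonopen $D\subsetneq\partial T_v$ with $D\neq\varnothing$, the orbit of $D$ under $\overline{\rist_G(v)}$ is infinite; this is essentially the content of the theorem applied one level down, so some care is needed to avoid a circular argument (the cleanest route is probably to prove it directly from the richness of $\overline{\rist_G(v)}$ near a non-interior point of $D$).
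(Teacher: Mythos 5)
First, a remark on scope: the paper does not prove this statement at all --- it is imported verbatim from \cite{BencsToth17} (Section 3.2), so there is no internal proof to compare against. Judged on its own merits, your proposal is sound in two of its three parts. The ergodicity argument (push-forward of an ergodic measure under the $G$-equivariant Borel map $\st$) is exactly the observation the paper itself makes after \eqref{EqMuC}, and the distinctness argument (disjoint orbits $[C_1],[C_2]$, injectivity of $\st$ from Lemma 3.4 of \cite{BencsToth17}, Lusin--Souslin to get Borel images, mutual singularity) is clean and correct. Your reduction of continuity to the statement that $[C]$ is infinite, via homogeneity of $\lambda_{[C]}$ and the fact that a closed finite-index subgroup of the profinite group $\overline G$ is open and hence contains $\overline G\cap U_n$ for some $n$, is also correct.

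The gap is in the final step of the continuity argument, and it is not merely the unproved lemma you flag --- the lemma you propose is \emph{false} for weakly branch groups, and the paper itself contains the counterexample. You want to find $h\in\overline{\rist_G(v)}$ moving $C\cap\partial T_v$, on the grounds that an infinite closed subgroup of $\aut(T_v)$ cannot stabilize a proper nonempty closed subset setwise. But consider the group $G$ of Proposition \ref{PropExample} (generated by the elements \eqref{EqGen}) and the set $C=\{\{a_j\}:a_{2i}=0\}$: for $v\in V_1$ the rigid stabilizer $\rist_G(v)$ is generated by transpositions $\sigma_w$ at even levels, each of which permutes an odd (unconstrained) coordinate, so $\rist_G(v)$ --- and hence its closure, since the setwise stabilizer of a closed set is closed --- fixes $C$ setwise, even though $C\cap\partial T_v$ is a proper, nonempty, non-open closed subset of $\partial T_v$. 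This is precisely the failure of condition $3b)$ that the paper records there; your lemma is exactly condition $3b)$, which holds for branch groups (Proposition \ref{PropBranch=>3b}) but, as the paper stresses, cannot be extended to weakly branch ones. Note that the theorem is still true in that example: the orbit $[C]$ is a continuum because of the elements $h_l\in\stab_G(n)\setminus\rist_G(n)$, which do move $C$. So any correct proof of continuity must exploit the full level stabilizer $\stab_G(n)=G\cap U_n$ (or, as in \cite{BencsToth17}, the injectivity of the pointwise-stabilizer map combined with a different count of the orbit), not rigid stabilizers of single vertices; as written, your argument establishes continuity only under the branch hypothesis.
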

\noindent As a result, Bencs and T\'oth obtained a continuum of ergodic IRS's for every weakly branch group.

\subsection{Characters associated to IRS.}\label{SubsecCharactersIRS}

In this paper we study relations between IRS's and characters on weakly branch groups.
\begin{Def}\label{DefChar}
A {\it character} on a group $G$ is a function $\chi:G\rightarrow
\mathbb{C}$ satisfying the following properties:
\begin{itemize}
\item[(1)] $\chi(g_1g_2)=\chi(g_2g_1)$ for any $g_1,g_2\in G$;
\item[(2)] the matrix
$\left\{\chi\left(g_ig_j^{-1}\right)\right\}_{i,j=1}^n$ is
positive semi-definite for any integer $n\geq 1$ and any elements $g_1,\ldots,g_n\in G$;
\item[(3)] $\chi(e)=1$, where $e$ is the  identity element of $G$.
\end{itemize} Extreme points in the simplex of characters are called \emph{indecomposable characters}. Equivalently, a character $\chi$ is indecomposable if and only if it
cannot be represented in the form $\chi=\alpha
\chi_1+(1-\alpha)\chi_2$, where $0<\alpha<1$ and $\chi_1,\chi_2$ are
distinct characters.
\end{Def}
\noindent The simplest examples of characters on any countable group $G$ are the \emph{trivial character} and the \emph{regular character} given by \begin{equation*}\chi_{\mathrm{triv}}(g)=1\;\;\text{for all}\;\;g\in G,\;\;\chi_{\mathrm{reg}}(g)=\delta_{e,g}=\left\{\begin{array}{ll}1,&\text{if}\;\;g=e,\\
0,&\text{otherwise}.
\end{array}\right.
\end{equation*}
The trivial character is always indecomposable. The regular character is indecomposable if and only if $G$ has the \emph{infinite conjugacy classes} property (ICC, for short), as shown in \cite{MurrayNeumann:1944}, Lemma 5.3.4. We notice that weakly branch groups are ICC  (see \cite{Grig11}, Theorem 9.17). Thus, the regular character on any weakly branch group is indecomposable. 

It is known that for any measure preserving action of a group $G$ on a probability space $(X,\mu)$ the function $\chi(g)=\mu(\fix(g))$ is a character, where $\fix(g)=\{x\in X:gx=x\}$ (see \eg \cite{V11}). In particular, for any IRS $\mu$ of a countable group $G$ the function
\begin{equation}\label{EqChiphi}\chi_\mu(g)=\mu(\{H\in\Sub(G):gHg^{-1}=H\}),g\in G\end{equation} is a character.
Notice that there is another natural character associated to an IRS $\mu$ of $G$. Introduce the following function:
\begin{equation}\label{EqChi'phi}\psi_\mu(g)=\mu(\{H\in\Sub(G):g\in H\}),g\in G.\end{equation}
The following statement is a folklore fact.
\begin{Lm}\label{LmChi'phiIsChar} For any IRS $\mu$ of a group $G$ the function $\psi_\mu$ defined by \eqref{EqChi'phi} is a character on $G$.
\end{Lm}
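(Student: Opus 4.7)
The plan is to verify the three axioms in Definition \ref{DefChar} directly from the definition of $\psi_\mu$, writing $\psi_\mu$ as an integral over $\Sub(G)$ of elementary positive definite functions coming from coset representations.

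Normalization is immediate: every subgroup contains the identity, so $\{H\in\Sub(G):e\in H\}=\Sub(G)$ and $\psi_\mu(e)=\mu(\Sub(G))=1$. For the class function property, I would use the $G$-invariance of $\mu$. For any $g_1,g_2\in G$, the conjugation map $H\mapsto g_1Hg_1^{-1}$ is a $\mu$-preserving bijection of $\Sub(G)$, and it sends the set $\{H:g_2g_1\in H\}$ onto the set $\{K:g_1(g_2g_1)g_1^{-1}\in K\}=\{K:g_1g_2\in K\}$. Applying $\mu$ to both sides yields $\psi_\mu(g_2g_1)=\psi_\mu(g_1g_2)$.

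The main (though still routine) step is positive semi-definiteness, and here the idea is to observe that $\psi_\mu$ is literally an average of the characteristic functions of subgroups. For each $H\in\Sub(G)$, set $\phi_H(g)=\mathbbm{1}_{H}(g)$. Then $\phi_H$ is positive definite: realize the quasi-regular representation $\pi_H$ of $G$ on $\ell^2(G/H)$ and let $\xi_H=\delta_{H}\in\ell^2(G/H)$ be the indicator of the trivial coset, so that
\begin{equation*}
\langle \pi_H(g)\xi_H,\xi_H\rangle=\mathbbm{1}_{gH=H}=\mathbbm{1}_{g\in H}=\phi_H(g).
\end{equation*}
Consequently, for any $g_1,\dots,g_n\in G$ and any $c_1,\dots,c_n\in\mathbb{C}$, the quantity $\sum_{i,j}c_i\overline{c_j}\,\phi_H(g_ig_j^{-1})=\|\sum_ic_i\pi_H(g_i)\xi_H\|^2$ is nonnegative.

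Finally, I would interchange sum and integral: since $\psi_\mu(g)=\int_{\Sub(G)}\phi_H(g)\,d\mu(H)$ by Fubini (or just by the definition of the push-forward from the evaluation map), we obtain
\begin{equation*}
\sum_{i,j}c_i\overline{c_j}\,\psi_\mu(g_ig_j^{-1})=\int_{\Sub(G)}\Bigl(\sum_{i,j}c_i\overline{c_j}\,\phi_H(g_ig_j^{-1})\Bigr)d\mu(H)\geqslant 0,
\end{equation*}
so the Gram matrix is positive semi-definite. The one point requiring a sentence of care is measurability of $H\mapsto\phi_H(g)=\mathbbm{1}_{\{H:g\in H\}}$, which is built into the definition of the product topology on $\Sub(G)\subset\{0,1\}^G$, so the integral makes sense and no obstacle arises there. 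I do not anticipate any genuine difficulty; this is really just the observation that the map $H\mapsto\phi_H$ embeds $\Sub(G)$ into the space of positive definite functions and $\psi_\mu$ is its barycenter with respect to $\mu$.
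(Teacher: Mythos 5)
Your proposal is correct and follows essentially the same route as the paper: both write $\psi_\mu(g)=\int_{\Sub(G)}\mathbbm{1}_H(g)\,\dd\mu(H)$, establish that each indicator $\mathbbm{1}_H$ is positive definite, and conclude by integrating. The only (cosmetic) difference is in that middle step, where you realize $\mathbbm{1}_H$ as the matrix coefficient $\langle\pi_H(g)\delta_H,\delta_H\rangle$ of the quasi-regular representation on $\ell^2(G/H)$, while the paper reorders $g_1,\dots,g_n$ by cosets of $H$ and observes that the Gram matrix becomes block-diagonal with all-ones blocks — two phrasings of the same computation.
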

\begin{proof} The function $\psi_\mu$ is  central ($\psi_\mu(hgh^{-1})=\psi_\mu(g)$ for all $h,g\in G$), since $\mu$ is invariant under conjugation. One has:
\begin{equation*}\psi_\mu(g)=\int\limits_{\Sub(G)}\mathbbm{1}_H(g)\dd\mu(H)
\end{equation*} for all $g\in G$, where $\mathbbm{1}_H(g)$ is the characteristic function of $H$. Let $H<G$ be any subgroup. For any $g_1,\ldots,g_n\in G, n\in \mathbb N$ the matrix $\{\mathbbm{1}_H(g_ig_j^{-1})\}_{i,j=1}^n$ consists of zeros (whenever $g_i,g_j$ in different cosets of $G$ modulo $H$) and ones (whenever $g_i,g_j$ are in the same coset). Reorder $\{g_1,\ldots,g_n\}$ in such a way that elements of the same coset form sequences of consecutive elements. Then $\{\mathbbm{1}_H(g_ig_j^{-1})\}_{i,j=1}^n$ is a block-diagonal matrix with blocks consisting entirely of ones. Clearly, it is positive definite. Hence, for every subgroup $H<G$ the function $\mathbbm{1}_H(g)$ is positive definite. As integral of positive definite functions, $\psi_\mu$ is also positive definite. Thus, $\psi_\mu$ is a character.
\end{proof}
 In general, $\chi_\mu$ and $\psi_\mu$ do not need to coincide.
 \begin{Ex}\label{ExDistChars0} Recall that for $H\in\Sub(G)$ the symbol $\delta_H$ stands for the delta measure supported at the point $H$. By definition, $\chi_{\delta_{G}}(g)=\psi_{\delta_{G}}(g)\equiv 1$ is the trivial character on $G$. However, $\psi_{\delta_{\{e\}}}(g)=\delta_{e,g}$ is the regular character on $G$, while $\chi_{\delta_{\{e\}}}(g)\equiv 1$ is the trivial one.
  \end{Ex}
  \begin{Ex}\label{ExDistChars1} Let $T=T_2$ be a binary regular rooted tree. Let $v_1,v_2$ be the vertices of the first level. Set $C=\partial T_{v_1}$. Let $G$ be a weakly branch group acting on $T_2$ and let $a\in G$ be such that $av_1=v_2$. The IRS $\mu=\mu_{[C]}$ is concentrated at one point which is the stabilizer $\stab_G(1)=\{h\in G:h(v_1)=v_1\}$ of the first level of $\Gamma$. One has
$$\chi_\mu(a)=1,\;\;\psi_\mu(a)=0.$$
\begin{Rem}\label{DefCharsNonnegative} By definition, $\chi_\mu(g)\geqslant 0$ and $\psi_\mu(g)\geqslant 0$ for every IRS $\mu$ of a group $G$ and any $g\in G$.
\end{Rem}
Let $\irs(G)$ ($\eirs(G)$) stand for the set of invariant (ergodic invariant) random subgroups of $G$. Let $\ch(G)$ ($\ich(G)$) stand for the set of characters (indecomposable characters) on $G$.
One of the focuses of the present paper is on the properties of the two maps  $\mathcal X,\Psi:\eirs(G)\to\ch(G)$ given by
\begin{equation}\label{EqMeasuresToCharacters}
\mathcal X:\mu\to \chi_\mu,\;\;\text{and}\;\;\Psi:\mu\to\psi_\mu,\;\;\mu\in\eirs(G),
\end{equation} if $\mu\neq \delta_{G}$, where $\delta_G$ is the atomic measure supported at the point $G\in\Sub(G)$. We set $\mathcal X(\delta_G)=\Psi(\delta_G)=\chi_\reg$ (the regular character on $G$) to avoid a trivial reason for non-injectivity of $\mathcal X$ (see Example \ref{ExDistChars0}).
We focus on weakly branch groups $G$ acting on $d$-regular rooted trees $T_d$, $d\geqslant 2$.

 Some natural questions about these maps for a particular group $G$ are
\begin{itemize}\item[(i)] Do $\mathcal X$ and $\Psi$ coincide?
\item[(ii)] Is $\mathcal X$ (or $\Psi$) injective?
\item[(iii)] Is it true that $\mathcal X(\eirs(G))$ (or $\Psi(\eirs(G))$) is a subset of $\ich(G)$?
\item[(iv)] Is it true that $\mathcal X(\eirs(G))$ (or $\Psi(\eirs(G))$) contains $\ich(G)$?
\end{itemize}

For some groups $G$, for which the description of all indecomposable characters and of all invariant random subgroups is known, the answers to the above questions (or at least some of them) are also know. An example is provided by simple approximately finite groups admitting finitely many ergodic invariant measures on the boundary of the associated Bratteli diagram. The results of \cite{DM13 AF} and \cite{DM17 IRS} imply that for these groups the answers to questions (i)-(iv) are positive, \ie $\mathcal X=\Psi$ is a bijection from $\eirs(G)$ to $\ich(G)$. We will see that this is far from being true for the case of weakly branch groups in general.

For the infinite symmetric group $S(\infty)$ the indecomposable characters were first described by Thoma \cite{Thoma}. Later new proofs of the Thoma's result were obtained, new effective methods developed, and remarkable connections with other areas were found by Kerov-Vershik \cite{VK81}, \cite{VershikKerov:1981}, Okounkov \cite{Ok99}, Olshanski \cite{Olshanski:1989}, and others. Ergodic invariant random subgroups on $S(\infty)$ were described by Vershik in \cite{V12}. Both, indecomposable characters on $S(\infty)$ and EIRSs of $S(\infty)$ are parameterized by two non-increasing sequences of non-negative numbers $\alpha=\{\alpha_i\}_{i\in\mathbb N},\;\beta=\{\beta_i\}_{i\in\mathbb N}$ such that $\sum \alpha_i+\sum\beta_j\leqslant 1$. However, the maps $\mathcal X$ and $\Psi$ do not completely respect the parametrization. Namely, if $\mu_{\alpha,\beta}$ is the EIRS and $\chi_{\alpha,\beta}$ is the indecomposable character corresponding to the sequences $\alpha,\beta$, then one has:
\begin{equation*} \mathcal X(\mu_{\alpha,\beta})=\chi_{\alpha\cup\beta,0^\infty},\;\;
\Psi(\mu_{\alpha,\beta})=\tfrac{1}{2}(\chi_{\alpha,\beta}+\chi_{\alpha\cup\beta,0^\infty}).
\end{equation*}
Here $0^\infty$ is the sequence of zeros and $\alpha\cup\beta$ is the sequence obtained by merging $\alpha$ and $\beta$ non-increasingly. Thus, $\mathcal X(\eirs(S(\infty)))$ contains only indecomposable characters $\chi_{\alpha,\beta}$ with $\beta=0^\infty$ and the preimage $\mathcal X^{-1}(\chi_{\alpha,0^\infty})$ is countably infinite for every sequence $\alpha$ containing an infinite number of nonzero elements. The character $\Psi(\mu_{\alpha,\beta})$ is indecomposable if and only if $\beta=0^\infty$.

There are also examples of groups for which there are only two indecomposable characters (the trivial and the regular one) and only two EIRS ($\delta_{\{e\}}$ and $\delta_G$). These groups include, for instance, Chevalier groups over infinite discrete
fields \cite{Ovchinnikov:1971}, \cite{Ovchinnikov:1971'}, commutators of Higman-Thompson groups \cite{DM14 Thompson}, and groups of rational points in certain algebraic groups over a number field \cite{Bekka},\cite{Bekka-Francini}.  For them the properties (i)-(iv) are obviously satisfied.

We observe that, in general, non-simplicity of a group $G$ is an obstruction to a positive answer to (iv). Recall that $\mathbb R_+=[0,+\infty)$.
\begin{Lm}\label{LmNonSimpleGroups}
Leg $G$ be a countable group containing a normal subgroup $H\neq G$ such that $G/H$ is virtually abelian. Then there exists an indecomposable character $\chi$ on $G$ such that $\chi(g)\notin \mathbb R_+$ for some $g\in G$. In particular, $\chi\notin \mathcal X(\eirs(G))\cup \Psi(\eirs(G))$.
\end{Lm}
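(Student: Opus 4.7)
The plan is to produce $\chi$ as the pullback under the quotient map $q:G\to K:=G/H$ of the normalized character of a carefully chosen finite-dimensional irreducible unitary representation of $K$. Since $K$ is virtually abelian and nontrivial, it is maximally almost periodic: taking a finite-index normal abelian subgroup $A\lhd K$ (as the core of any finite-index abelian subgroup), if $A=\{e\}$ then $K$ itself is a nontrivial finite group whose regular representation contains a nontrivial irreducible summand, and if $A\ne\{e\}$ a nontrivial character $\eta\in\hat A$ (provided by Pontryagin duality) induces a finite-dimensional nontrivial representation $\mathrm{Ind}_A^K\eta$ of $K$. Decomposing into irreducibles yields a nontrivial finite-dimensional irreducible unitary representation $\rho:K\to U(n)$.

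The crucial step is to show that $\chi_\rho(k):=\tr(\rho(k))/n$ takes a value outside $\mathbb R_+$ for some $k\in K$. Passing to the compact subgroup $L:=\overline{\rho(K)}\subset U(n)$, the tautological inclusion $L\hookrightarrow U(n)$ is a continuous nontrivial irreducible unitary representation of $L$. By Schur orthogonality for compact groups, $\int_L \tr(l)\,\dd l=0$, so if $\tr$ were $\mathbb R_+$-valued on $L$ then continuity would force $\tr\equiv 0$, contradicting $\tr(e)=n$. Hence $\tr(l_0)\notin\mathbb R_+$ for some $l_0\in L$, and since $\rho(K)$ is dense in $L$ while $\mathbb R_+$ is closed in $\mathbb C$, there exists $k\in K$ with $\chi_\rho(k)\notin\mathbb R_+$.

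It remains to verify that $\chi:=\chi_\rho\circ q$ is indecomposable on $G$. Given a convex decomposition $\chi=\alpha\chi_1+(1-\alpha)\chi_2$ into characters of $G$, the identity $\chi|_H\equiv 1$ combined with $|\chi_i(h)|\le \chi_i(e)=1$ forces $\chi_1|_H=\chi_2|_H\equiv 1$; the Cauchy--Schwarz-type inequality $|\varphi(gh)-\varphi(g)|^2\le 2(1-\mathrm{Re}\,\varphi(h))$ for characters then implies that each $\chi_i$ descends to a character on $K$, so the decomposition descends to one of $\chi_\rho$. But $\chi_\rho$, being the normalized character of an irreducible finite-dimensional representation, is indecomposable, forcing $\chi_1=\chi_2=\chi$. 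The ``in particular'' assertion is then immediate from Remark \ref{DefCharsNonnegative}, which guarantees $\chi_\mu(g),\psi_\mu(g)\in\mathbb R_+$ for every IRS $\mu$. The main obstacle in this outline is the middle step, which we overcome by compactifying via $L$ to make Schur orthogonality available for the \emph{a priori} non-compact group $K$.
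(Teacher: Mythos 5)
Your argument is correct, and it takes a genuinely different route from the paper's. The paper splits into two cases: if the quotient $U=G/H$ (or a further finite quotient of it) is a nontrivial finite group, it uses the identity $\sum_{\chi\in\ich(U)}\frac{(\dim\chi)^2}{|U|}\chi=\chi_\reg$ evaluated at $g\neq e$ -- since the trivial character contributes $+1$ and the sum is $0$, some indecomposable $\chi$ must take a value outside $\mathbb R_+$; if $U$ is abelian, it exhibits an explicit one-dimensional unitary character that is nontrivial on a cyclic subgroup. You instead build a single nontrivial finite-dimensional irreducible unitary representation $\rho$ of $K=G/H$ by inducing a nontrivial character from a finite-index normal abelian subgroup, and then locate a non-$\mathbb R_+$ value of $\tr\rho/n$ by passing to the compact closure $L=\overline{\rho(K)}$ and using $\int_L\tr\,\dd l=0$ together with continuity and density. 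This is in essence the continuous-group counterpart of the paper's finite averaging argument (both are orthogonality relations against the trivial representation), but your version is uniform across the two cases. You also gain something the paper leaves implicit: a careful verification, via the inequality $|\varphi(gh)-\varphi(g)|^2\leqslant 2(1-\mathrm{Re}\,\varphi(h))$, that characters trivial on $H$ form a face of the character simplex of $G$, so that the pullback of an indecomposable character of $G/H$ is indecomposable on $G$ -- a point the paper's one-line reduction to the quotient does not spell out. The price is a slightly heavier toolkit (induced representations, Haar measure and Schur orthogonality on the compact closure) where the paper gets by with finite group character theory and explicit exponentials.
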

\begin{proof} Given a normal subgroup $H\lhd G$ and a character $\chi_0$ on $G/H$ the formula $\chi(g)=\chi_0(gH),g\in G$, defines a character on $G$. Thus, it is sufficient to find a character $\chi_0$ on a virtually abelien group $U=G/H$ such that $\chi_0(u)\notin \mathbb R_+$ for some $u\in U$. Moreover, without loss of generality we may assume that $U$ is either a non-trivial abelian or a non-trivial finite group.

If $U$ is abelian, then $U$ contains either a copy $Z$ of either $\mathbb Z$ or a $\mathbb Z_m=\mathbb Z/m\mathbb Z$, $m\geqslant 2$. In the first case, let $r\in \mathbb R\setminus \mathbb N$, and  $\chi_0(z+u)=\exp(2\pi irz)$, $z\in Z\simeq \mathbb Z$, $u\in U/Z$. In the second case, let $r\in\mathbb Z\setminus m\mathbb Z$ and $\chi_0(z+u)=\exp(2\pi irz/m)$, $z\in Z\simeq \mathbb Z_m$, $u\in U/Z$.

If $U$ is finite-dimensional then $$\sum\limits_{\chi\in \ich(U)}c_\chi\chi=\chi_\reg,$$ where $c_\chi=(\dim\chi)^2/|U|>0$. For any $g\in U,g\neq e$, $\chi_\reg(g)=0$ and $\chi_\triv(g)=1$, where $\chi_\triv\in \ich(U)$ is the trivial character. Thus, it is not possible that $\chi(g)\geqslant 0$ for all $\chi\in \ich(U)$, \ie there exists $\chi_0\in\ich(U)$ such that $\chi_0(g)\notin \mathbb R_+$.
\end{proof}

\subsection{Characters on weakly branch groups.}\label{SubsecWBCharacters}
Now, let us restrict our attention to the characters associated to the IRS of the form $\mu_{[C]}$ and $\mu^{\mathrm p}_{[C]}$ (see \eqref{EqMupC} and \eqref{EqMuC}). To simplify the notations, given a weakly branch group $G$ acting on a $d$-regular tree $T_d$ for a closed subsect $C\subset \partial T_d$ we denote:
\begin{equation}\label{EqCharIRSnotations} \chi_C=\chi_{\mu_{[C]}},\;\chi_C^{\mathrm p}=\chi_{\mu_{[C]}^\mathrm{p}},\;
\psi_C=\psi_{\mu_{[C]}},\;\psi_C^{\mathrm p}=\psi_{\mu_{[C]}^\mathrm{p}}
\end{equation} (see formulas \eqref{EqMupC},\;\eqref{EqMuC},\;\eqref{EqChiphi},\;\eqref{EqChi'phi}).
As Example \ref{ExDistChars1} showed, $\chi_C$ and $\psi_C$ do not need to coincide. Let us show that $\chi_C^{\mathrm p}$ and $\psi_C^{\mathrm p}$ do not need to coincide.
\end{Ex}
\begin{Ex}\label{ExDistChars2} Let $T,C,\Gamma$ be as in Example \ref{ExDistChars1} and $b\in\rist_\Gamma(v_1)\setminus\{e\}$. The IRS $\mu^\p_{[C]}$ is concentrated at two points $\rist_\Gamma(v_1)$ and $\rist_\Gamma(v_2)$. One has:
$$\chi_C^\p(b)=1,\;\;\psi_C^\p(b)=\tfrac{1}{2}.$$
\end{Ex}
To summarize, given a subgroup $G<\aut(T_d)$ for any closed subset $C$ of $\partial T_d$ we can associate two IRS's $\mu_{[C]}$ and $\mu^\p_{[C]}$ of $G$. In turn, to each of these IRS's one can associate two characters $\chi$ and $\psi$ on $G$. Thus, we obtain four possible characters on $G$ associated to each closed subset $C\subset\partial T_d$:
\begin{equation*}\chi_C,\;\chi_C^\p,
\;\psi_C,\;\psi_C^\p.
\end{equation*}
Below we provide formulas for these characters in terms of the measure $\lambda_{[C]}$ and show that $\chi^\p_C=\psi_C$ if $G$ is weakly branch.
\begin{Lm}\label{LmChiCLambda} Let $G<\aut(T_d)$ be a weakly branch group, where $d\in\mathbb N,d\geqslant 2$. Then for any clonopen subset $C\subset\partial T_d$ and any $g\in G$ one has:
\begin{align}\label{EqCharacterFormulas}\begin{split} \chi_C(g)=
\lambda_{[C]}(\{B\in [C]:\stab(B)=\stab(gB)\})\geqslant\\
\psi_C(g)=\chi^\p_C(g)=\lambda_{[C]}(\{B\in[C]:gB=B\})\geqslant \\
\psi^\p_C(g)=\lambda_{[C]}(\{B\in[C]:g|_B=\id\}.\end{split}
\end{align}
\end{Lm}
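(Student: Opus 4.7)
The plan is to unwind the definitions of the four characters in the order they appear, using the identities $g\,\stab(B)\,g^{-1}=\stab(gB)$ and $g\,\st(B)\,g^{-1}=\st(gB)$ to convert conjugation conditions on subgroups into translation conditions on closed sets, and then to appeal to Lemma 3.4 of \cite{BencsToth17} (injectivity of $\st$) to identify $\chi_C^{\mathrm p}$ with $\psi_C$.

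First, I would handle the three ``direct'' formulas. By the definition \eqref{EqChiphi} of $\chi_\mu$ and the construction \eqref{EqMuC} of $\mu_{[C]}$ as the push-forward of $\lambda_{[C]}$ under $\stab$,
\[
\chi_C(g)=\mu_{[C]}(\{H:gHg^{-1}=H\})=\lambda_{[C]}\bigl(\{B\in[C]:g\,\stab(B)\,g^{-1}=\stab(B)\}\bigr).
\]
Since $g\,\stab(B)\,g^{-1}=\stab(gB)$, this becomes the first line of \eqref{EqCharacterFormulas}. The same push-forward argument applied to $\psi_\mu$ from \eqref{EqChi'phi} gives immediately
\[
\psi_C(g)=\lambda_{[C]}(\{B:g\in\stab(B)\})=\lambda_{[C]}(\{B:gB=B\}),
\]
and applied to $\mu_{[C]}^\p=\st_*\lambda_{[C]}$ it gives
\[
\psi_C^\p(g)=\lambda_{[C]}(\{B:g\in\st(B)\})=\lambda_{[C]}(\{B:g|_B=\id\}),
\]
which is the fourth line.

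Second, I would establish the equality $\chi_C^\p(g)=\psi_C(g)$, which is the only place the weakly branch assumption is actually used. Pushing $\chi_{\mu}$ forward through $\st$ and using $g\,\st(B)\,g^{-1}=\st(gB)$ yields
\[
\chi_C^\p(g)=\lambda_{[C]}\bigl(\{B\in[C]:\st(gB)=\st(B)\}\bigr).
\]
Obviously $\{B:gB=B\}\subset\{B:\st(gB)=\st(B)\}$. For the reverse inclusion I invoke the injectivity of the map $\st:\mathcal C\to\Sub(G)$ for weakly branch groups (Lemma 3.4 of \cite{BencsToth17}, already quoted in the paper): $\st(gB)=\st(B)$ forces $gB=B$. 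Hence the two sets agree, giving $\chi_C^\p=\psi_C$.

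Finally, the two inequalities in \eqref{EqCharacterFormulas} are set-inclusion statements between the relevant subsets of $[C]$: if $gB=B$ then trivially $\stab(gB)=\stab(B)$, so $\{B:gB=B\}\subset\{B:\stab(gB)=\stab(B)\}$, yielding $\chi_C(g)\geq\psi_C(g)$; and if $g|_B=\id$ then $gB=B$, yielding $\psi_C(g)=\chi_C^\p(g)\geq\psi_C^\p(g)$. There is no genuine obstacle; the only subtlety is the injectivity step, which is exactly where weakly branch is needed, so I would be careful to state and cite it explicitly rather than treating it as obvious.
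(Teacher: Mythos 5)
Your proposal is correct and follows essentially the same route as the paper: unwind the push-forward definitions, use $g\,\stab(B)\,g^{-1}=\stab(gB)$ and $g\,\st(B)\,g^{-1}=\st(gB)$, and reduce the identity $\psi_C=\chi_C^\p$ to the implication $\st(gB)=\st(B)\Rightarrow gB=B$. The only difference is that you dispatch that implication by citing the injectivity of $\st$ from Lemma 3.4 of \cite{BencsToth17} (which the paper itself quotes earlier), whereas the paper reproves it inline by producing, for $x\in gB\setminus B$, an element $h$ supported in a cylinder disjoint from $B$ with $hx\neq x$; the underlying weakly-branch argument is the same.
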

\begin{proof} Let $g\in G$. By \eqref{EqChiphi} one has:
\begin{equation*}\chi_C(g)=\mu_{[C]}(\{H<G:gHg^{-1}=H\}).\end{equation*} The IRS $\mu_{[C]}$ is concentrated on subgroups of the form $\stab(B)$, $B\in[C]$. Let $H=\stab(B)$, $B\in[C]$. Then $gHg^{-1}=\stab(gB)$.  Using \eqref{EqMuC} we obtain
%\begin{equation*}\chi_C(g)=\mu_{[C]}(\{\stab(B):B\in\fix_{[C]}(g)\})
%=\lambda_{[C]}(\{B\in[C]:gB=B\}).\end{equation*}
the first line of \eqref{EqCharacterFormulas}.

Similarly
\begin{align*}
\psi_C(g)=\mu_{[C]}(\{H<G:g\in H\})=\lambda_{[C]}(\{B\in[C]:gB=B\}),\\
\chi^\p_C(g)=\mu^\p_{[C]}(\{H<G:gHg^{-1}=H\})=\\ \lambda_{[C]}(\{B\in[C]:\st(B)=\st(gB)\}).
\end{align*}
Now, let $B$ be a closed subset of $\partial T$ and $g\in G$ such that $B\neq gB$. Let $x\in gB\setminus B$.  There exists a vertex $v$ of $T$ such that $x\in\partial T_v$ and $\partial T_v\cap B=\varnothing$. Since $G$ is weakly branch one can find $h\in G$ such that $\supp(h)\subset\partial T_v$ and $hx\neq x$. This shows that $h\in \st(B)\setminus \st(gB)$ and so $\st(B)\neq \st(gB)$. Thus, $\st(G)=\st(gB)$ if and only if $B=gB$. This finishes the proof of the second line of \eqref{EqCharacterFormulas}.

Finally, using \eqref{EqPsi'} and \eqref{EqChiphi} we obtain:
\begin{align*}
\psi^\p_C(g)=\mu^\p_{[C]}(\{H<G:g\in H\})=\lambda_{[C]}(\{B\in[C]:g|_B=\id\}.
\end{align*} The inequalities between the lines of \eqref{EqCharacterFormulas} are straightforward.
\end{proof}
\noindent Thus, for a weakly branch group and a clonopen set $C\in\mathcal C$ we have $\psi_C=\chi_C^\p$. However, since we are interested in both maps $\mathcal X$ and $\Psi$, we will use either notation $\chi_C^\p$ or $\psi_C$ depending on whether we consider this character as image of $\Psi$ or of $\mathcal X$.

\begin{Rem}\label{RemVershikFormula} Observe that $\chi_C$ and $\psi_C$ are characters of the form $g\to\mu(\fix(g))$ for the actions of $G$ on $(\Sub(G),\mu_{[C]})$ and $([C],\lambda_{[C]})$ correspondingly, as follows from the definition of $\chi_C$ and formula \ref{EqCharacterFormulas}. In fact, $\psi_C^\p$ is also of this form for the action of $G$ on $(\overline{G}/\overline{\st(C)},\gamma_{[C]})$ by shifts, where $\gamma_{[C]}$ is the push-forward measure of $\lambda$ under the projection $\overline G\to \overline{G}/\overline{\st(C)}$ and $\overline{\st(C)}$ is the closure of $\st(C)\subset G$ in $\overline G$. Indeed, $B\in [C]$ means $B=hC$ for some $h\in\overline G$. For $g\in G$ one has \begin{align*}\gamma_{[C]}(\fix_{\overline{G}/\overline{\st(C)}}(g))=
\gamma_{[C]}(\{h\overline{\st(C)}:h\in \overline G,gh\overline{\st(C)}=h\overline{\st(C)}\}\\
=\lambda(\{h\in\overline G:h^{-1}gh\in\overline{\st(C)}\})=\lambda(\{h\in\overline G:g\in\overline{\st(hC)}\})\\
=\lambda_{[C]}(\{B\in [C]:g|_B=\id\}).
\end{align*}
Introduce the following maps:
\begin{align*}p_1:\overline{G}/\overline{\st(C)}\to [C],\;p_1(g\overline{\st(C)})=gC,\;g\in \overline G,\\
p_2:[C]\to\Sub(G),\; p_2(B)=\stab(B),\;B\in [C].
\end{align*} The three dynamical systems involved are related via $G$-equivariant pair of maps:
\begin{equation}(\overline G/\st(C),\gamma_{[C]})\overset{p_1}{\rightarrow}([C],\lambda_{[C]})\overset{p_2}{\rightarrow}(\Sub(G),\mu_{[C]}).
\end{equation}
\end{Rem}

In general, the characters $\chi_C, \chi^\p_C$ and $\psi^\p_C$ do not need to coincide.
\begin{Ex}\label{ExDistinctCharacters} Let $d=3$. Encode the vertices of $T=T_3$ by finite sequences of letters from $\{a,b,c\}$. Set
\begin{equation*} C=\bigcup\limits_{n\geqslant 0} \partial T_{c^na}=\partial T_a\cup\partial T_{ca}\cup\partial T_{c^2a}\cup \ldots .
\end{equation*} Let $g\in\aut(T_3)$ be the transposition of $T_a$ and $T_b$. For a letter $x\in \{a,b,c\}$ let $h_x\in\aut(T_3)$ be the cyclic permutation of $T_{xa},\; T_{xb}$ and $T_{xc}$ in this order. Let $G$ be any weakly branch group containing $g,h_a,h_b$, and $h_c$ (\eg $G=\autf(T_3)$). Then
\begin{align*} \chi_C(g)=1/3,\;\;\chi^\p_C(g)=\psi^\p_C(g)=0,\\
 \chi_C(h_a)=\chi^\p_C(h_a)=2/3,\;\;\psi^\p_C(h_a)=1/3.
\end{align*} Thus, the characters $\chi_C,\;\chi^\p_C$, and $\psi^\p_C$ on $G$ are pairwise distinct.
\end{Ex}
\begin{Rem}\label{RemDistinctChars} In fact, for every weakly branch group $G$ acting on $T_d$, $d\geqslant 2$, and any $C\in\mathcal C$ with nonempty interior one has $\psi_C\neq \psi_C^\p$. Indeed, there exists a vertex $v$ of $T_d$ and  $g\neq\id$ with $\supp(g)\subset\partial T_v\subset C$. Since the set of elements $\{h\in\overline{G}:hv=v\}$ has positive measure with respect to the Haar measure on $\overline{G}$, we have:
\begin{equation*}\lambda_{[C]}(\{B\in [C]:\supp(g)\subset B\})>0.\end{equation*} Using Lemma \ref{LmChiCLambda} we obtain that $\psi_C(g)>\psi_C^\p(g)$.\end{Rem}

%%%%%%%%%%%%%%%%%%%%%%%%%%%%%%%%%%%%%%%%%%%%%%%%%%%%%%%%%%%%%%%%%%%%%%%%%%%%%%
\subsection{Non-free actions and groupoid construction.}\label{SubsecNFandGroupoid}
%%%%%%%%%%%%%%%%%%%%%%%%%%%%%%%%%%%%%%%%%%%%%%%%%%%%%%%%%%%%%%%%%%%%%%%%%%%%%%
In \cite{V11} Vershik originated the study of non-free actions by introducing and investigating the notions of extreme non-freeness and total non-freeness. In \cite{DG15 Diag} we introduced two other useful notions of non-freeness of group actions.
\begin{Def}\label{DefANF}
 An action of a countable group $G$ on a measure space $(X,\mu)$ is called \emph{absolutely non-free} if
for every measurable set $A$ and every $\epsilon>0$ there exists $g\in G$ such that
$\mu(\fix_X(g)\Delta A)<\epsilon$.
\end{Def}
\noindent For a measure-preserving action $\alpha$ of a group $G$ on a Lebesgue probability space $(X,\Sigma,\mu)$ and a measurable set $A\subset X$ introduce the subgroups $G_{\alpha,A}<G$ of elements acting essentially trivially outside $A$:\begin{equation}\label{EqGalphaA}G_{\alpha,A}=\{g\in G:\mu(\supp(g)\setminus A)=0\}.\end{equation} We will omit the index $\alpha$ in cases when the action is clear from the context.
\begin{Def}\label{DefPNF} Let a countable group $G$ act on a Lebesgue probability space $(X,\Sigma,\mu)$ by measure-preserving transformations, where $\Sigma$ is the collection of all measurable subsets of $X$. We will say that this action is \emph{perfectly non-free} if there exists a countable collection $\mathcal A$ of measurable subsets of $X$ such that $\mathcal A$ together with the sets of zero measure generates $\Sigma$ and for each $A\in\mathcal A$ the $G_A$-orbit $\{gx:g\in G_A\}\subset X$ is infinite for $\mu$-almost all $x\in A$.
\end{Def}
\noindent The authors showed in \cite{DG15 Diag} that absolute non-freeness implies perfect non-freeness which in turn implies total non-freeness.
Notice that by Rokhlin's Theorem on Basis from \cite{Rokh49} any family $\mathcal F$ of measurable subsets of a Lebesgue probability space $(X,\Sigma,\mu)$ which separates the points of $X$ generates $\Sigma$.

Now we recall briefly the groupoid construction associated to a measure-preserving action of a group $G$ on a Lebesgue probability space $(X,\mu)$ (see \eg \cite{DG15 Diag} for details). Let $\mathcal R$ be the orbit equivalence relation on $X$ considered as a subset of $X\times X$ equipped with the action of $G$ on the left coordinate by $g(x,y)=(gx,y)$. Notice that one can consider the action on the right coordinate as well: $(x,y)\to (x,gy)$.
Let $\nu=\mu\times\{\text{counting measure on orbits}\}$ be the (infinite) $G\times G$-invariant measure on $\mathcal R$ which restricts to the diagonal $\{(x,x):x\in X\}\subset\mathcal R$ as $\mu$.
 Introduce the representations $\pi$ and $\widetilde\pi$ in $L^2(\mathcal R,\nu)$ by:
  \begin{equation}\label{EqPiTildePi}
 (\pi(g)f)((x,y))=f(g^{-1}x,y),\;\;(\widetilde\pi(g)f)((x,y))=
 f(x,g^{-1}y).\end{equation}
  The representation $\pi$ is called (left) groupoid representation of $G$.
Let $\mathcal M_\pi$ be the von Neumann algebra generated by the operators of representation $\pi$. Denote by $\mathcal M_\mathcal R$ the Murray-von Neumann (or Krieger) algebra generated by $\mathcal M_\pi$ and the operators of multiplication by the functions of the form $f(x,y)=m(x),\;m(x)\in L^\infty(X,\mu)$. Similarly, the operators of the algebra $\mathcal M_{\widetilde\pi}$ and multiplications by the functions of the form $f(x,y)=\widetilde m(y),\;\widetilde m(y)\in L^\infty(X,\mu)$ generate a von Neumann algebra ${\mathcal M}_{\widetilde{\mathcal R}}\supset \mathcal M_{\widetilde\pi}$ isomorphic to $\mathcal M_{\mathcal R}$. Set $\xi(x,y)=\delta_{x,y}$. Let
$$\mathcal H=\overline{\lin\{\pi(g)\xi:g\in G\}}.$$ Observe that by definition the triple $(\pi|_\mathcal{H},\mathcal H,\xi)$ is isomorphic to the GNS-construction associated to $$\chi(g)=(\pi(g)\xi,\xi)=\mu(\fix(g)).$$
In particular, the character $\chi(g)=\mu(\fix(g))$ is indecomposable if and only if $\pi|_\mathcal{H}$ is a factor representation. Introduce a unitary representation $\rho$ of $G\times G$ in $L^2(\mathcal R,\nu)$ by \begin{equation}\label{EqRho}\rho(g_1,g_2)=\pi(g_1)\widetilde\pi(g_2),g_1,g_2\in G.\end{equation}
Let us formulate a useful result from \cite{DG14}, Proposition 13.
\begin{Prop}\label{PropEqIffIrred} The following assertions are equivalent:\\
$1)$  $\mathcal M_{\pi}=\mathcal M_{\mathcal{R}}$;\\
$2)$ $\rho$ is irreducible;\\
$3)$ the unit vector $\xi=\delta_{x,y}$ is cyclic in $L^2(\mathcal R,\nu)$ for $\mathcal{M}_{\pi}$ (equivalently, for $\mathcal{M}_{\widetilde\pi}$).
\end{Prop}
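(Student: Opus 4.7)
My plan is to prove the three-way equivalence via $(1)\Leftrightarrow(3)$ and $(2)\Leftrightarrow(3)$, using the Tomita--Takesaki machinery for the groupoid von Neumann algebra $\mathcal{M}_{\mathcal R}$. The starting ingredients are standard: $\xi = \delta_{x,y}$ is a cyclic and separating trace vector for $\mathcal{M}_{\mathcal R}$, with induced faithful normal trace $\tau(T) = \langle T\xi,\xi\rangle$, and the modular conjugation $(Jf)(x,y) = \overline{f(y,x)}$ satisfies $J\xi = \xi$, $J\mathcal{M}_{\mathcal R}J = \mathcal{M}_{\widetilde{\mathcal R}} = \mathcal{M}_{\mathcal R}'$, and, by direct check, $J\pi(g)J = \widetilde\pi(g)$, so $J\mathcal{M}_\pi J = \mathcal{M}_{\widetilde\pi}$. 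I will also use that, in the paper's ergodic setting, $\mathcal{M}_{\mathcal R}$ is a factor, i.e.\ $Z(\mathcal{M}_{\mathcal R}) = \mathbb{C}$.

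For $(1)\Leftrightarrow(3)$ the forward direction is immediate since $\mathcal{M}_\pi\xi \subset \mathcal{M}_{\mathcal R}\xi$ and the latter is dense. For $(3)\Rightarrow(1)$ I invoke the $\tau$-preserving conditional expectation $E:\mathcal{M}_{\mathcal R}\to \mathcal{M}_\pi$, guaranteed by finiteness of $\mathcal{M}_{\mathcal R}$ and characterized by $E(T)\xi = P_{\overline{\mathcal{M}_\pi\xi}}\,T\xi$. Cyclicity of $\xi$ for $\mathcal{M}_\pi$ makes this projection the identity, so $E(T)\xi = T\xi$ for every $T\in \mathcal{M}_{\mathcal R}$; the separating property of $\xi$ for $\mathcal{M}_{\mathcal R}$ then forces $E(T) = T$, i.e.\ $\mathcal{M}_\pi = \mathcal{M}_{\mathcal R}$.

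For $(2)\Rightarrow(3)$ the crux is the identity $\widetilde\pi(g)\xi = \pi(g^{-1})\xi$, which one verifies by computing both sides as the indicator of the graph $\{(x,y):y=gx\}\subset \mathcal R$. Consequently $\overline{\mathcal{M}_\pi\xi}$ is $\widetilde\pi(G)$-invariant, and hence $\mathcal{M}_{\widetilde\pi}$-invariant by the bicommutant theorem, so the orthogonal projection $P$ onto $\overline{\mathcal{M}_\pi\xi}$ lies in $\mathcal{M}_\pi'\cap \mathcal{M}_{\widetilde\pi}'$. Irreducibility of $\rho$ is exactly $(\mathcal{M}_\pi\vee \mathcal{M}_{\widetilde\pi})' = \mathbb{C}$, forcing $P$ to be scalar; since $P\xi=\xi\neq 0$ we get $P=I$, which is (3).

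To close the loop I show $(1)\Rightarrow(2)$: under $\mathcal{M}_\pi = \mathcal{M}_{\mathcal R}$ one has $\mathcal{M}_{\widetilde\pi} = J\mathcal{M}_\pi J = J\mathcal{M}_{\mathcal R}J = \mathcal{M}_{\mathcal R}'$, so $(\mathcal{M}_\pi\vee \mathcal{M}_{\widetilde\pi})' = \mathcal{M}_{\mathcal R}\cap \mathcal{M}_{\mathcal R}' = Z(\mathcal{M}_{\mathcal R}) = \mathbb{C}$ by factoriality. The main point to watch is precisely this last step: it is the only place where ergodicity of the $G$-action on $(X,\mu)$ is genuinely needed, and without it the equivalence $(1)\Leftrightarrow(2)$ can fail, because $(\mathcal{M}_\pi\vee\mathcal{M}_{\widetilde\pi})' = Z(\mathcal{M}_{\mathcal R})$ is then strictly bigger than $\mathbb{C}$.
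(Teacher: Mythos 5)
Your proposal is correct and follows essentially the same route as the paper: the commutation theorem for the groupoid algebra (which you phrase via the modular conjugation $J$, the paper via the symmetry $\mathcal M_{\widetilde{\mathcal R}}'=\mathcal M_{\mathcal R}$), the identity $\widetilde\pi(g)\xi=\pi(g^{-1})\xi$ for the step from irreducibility to cyclicity, and the trace-preserving conditional expectation $\mathcal M_{\mathcal R}\to\mathcal M_\pi$ for $3)\Rightarrow 1)$. The only differences are organizational (you prove $1)\Leftrightarrow 3)$, $2)\Rightarrow 3)$, $1)\Rightarrow 2)$ instead of the paper's cycle $1)\Rightarrow 2)\Rightarrow 3)\Rightarrow 1)$), and your closing remark correctly identifies factoriality, hence ergodicity, as the one place where $1)\Rightarrow 2)$ genuinely uses it.
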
\noindent
The preprint \cite{DG14} is the first version of the paper \cite{DG15 Diag}. Proposition \ref{PropEqIffIrred} was deleted from \cite{DG15 Diag} since it was no longer used in the proofs of the main results. This proposition is useful itself and we will use it in the proof of Theorem \ref{ThEmbeddings}. For the readers convenience we provide a proof of Proposition \ref{PropEqIffIrred} in Section \ref{SecEmbeddings}.

 Let us formulate a few results from \cite{DG15 Diag} which we will use in the present paper.
\begin{Th}\label{ThWBANF} For any countable weakly branch group $G$ acting on a regular rooted tree $T_d$ the action of $G$ on $(\partial T_d,\mu_d)$ is absolutely non-free (and therefore, perfectly non-free).\end{Th}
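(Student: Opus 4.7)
The plan is to establish absolute non-freeness by approximating each finite union of cylinders by the fixed set of a product of rigid-stabilizer elements, obtaining the necessary small-fixed-set factors from a Burnside-type average over the profinite closure.

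By Rokhlin's Basis Theorem invoked in the text, the countable separating family of cylinders $\{\partial T_v\}_v$ generates the measure algebra of $(\partial T_d,\mu)$, so it is enough to approximate each clopen set $A_V=\bigsqcup_{v\in V}\partial T_v$ (with $V\subseteq V_n$, $n\geq 0$) to within $\epsilon$ in symmetric difference. Given $V$ and $\epsilon>0$, I would take $g=\prod_{w\in V_n\setminus V}g_w$ with each $g_w\in\rist_G(w)$. Since rigid stabilizers at distinct level-$n$ vertices commute and have pairwise disjoint supports, $g$ is a well-defined element of $G$ that fixes $A_V$ pointwise and satisfies $\fix(g)\triangle A_V=\bigsqcup_w(\fix(g_w)\cap\partial T_w)$. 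The theorem thus reduces to a Key Lemma: for every vertex $w$ of $T_d$ and every $\delta>0$ there exists $g_w\in\rist_G(w)$ with $\mu(\fix(g_w)\cap\partial T_w)<\delta\mu(\partial T_w)$; taking $\delta=\epsilon$ then yields $\mu(\fix(g)\triangle A_V)\leq\epsilon$.

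To prove the Key Lemma, I would pass to the profinite closure $\bar G\leq\aut(T_d)$ equipped with its normalized Haar measure $\lambda$ and work with the closed subgroup $\overline{\rist_G(w)}$. Combining level-transitivity of $G$ (hence of $\bar G$), the ancestor argument (any element of $\bar G$ mapping one descendant of $w$ to another must fix $w$), and the inclusion $\rist_G(w)\supseteq\prod_{u\in V_{|w|+k}\cap T_w}\rist_G(u)$ provided by the weakly branch hypothesis, one shows that the number of orbits of $\overline{\rist_G(w)}$ on the $d^k$ descendants of $w$ at level $|w|+k$ is $o(d^k)$. Burnside's lemma then gives $\mathbb E_\lambda |\fix(g)\cap V_{|w|+k}\cap T_w|=o(d^k)$, whence $\mathbb E_\lambda\mu(\fix(g)\cap\partial T_w)\to 0$ as $k\to\infty$. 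Hence some $\tilde g\in\overline{\rist_G(w)}$ has $\mu(\fix(\tilde g)\cap\partial T_w)$ arbitrarily small, and by density of $\rist_G(w)$ in $\overline{\rist_G(w)}$ (the action on any finite level being determined by the portrait to that depth) one picks $g_w\in\rist_G(w)$ agreeing with $\tilde g$ on $V_{|w|+k}$, inheriting the estimate.

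The main obstacle is establishing the orbit-count bound for $\overline{\rist_G(w)}$ on deep levels of $T_w$: for a branch group this is automatic, since finite-indexness of $\rist_G(w)$ in $G$ forces a uniformly bounded orbit count, but in the purely weakly branch case the bound demands a careful inductive argument that combines the recursive product decomposition of rigid stabilizers with $G$-transitivity at every deeper level. Once this estimate is in hand, the remaining steps reduce to routine Burnside-plus-density manipulations, and perfect non-freeness then follows from absolute non-freeness as recorded earlier in the text.
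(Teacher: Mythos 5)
You should first note that the paper itself contains no proof of this statement: Theorem \ref{ThWBANF} is imported verbatim from \cite{DG15 Diag} (it is listed among the results from that paper which are merely quoted), so your argument can only be compared with that source. The architecture of your proposal is sound: the reduction to clopen sets, the decomposition $g=\prod_{w\in V_n\setminus V}g_w$ with $\fix(g)\,\triangle\, A_V=\bigsqcup_{w}\bigl(\fix(g_w)\cap\partial T_w\bigr)$, the Haar--Burnside estimate $\mathbb E_\lambda\,\mu(\fix(g)\cap\partial T_w)\leqslant d^{-k}\mu(\partial T_w)\cdot O_k$ (where $O_k$ is the number of orbits of $\rist_G(w)$ on $V_{|w|+k}\cap T_w$), and the passage from $\overline{\rist_G(w)}$ back to $\rist_G(w)$ by matching actions on a finite level are all correct.

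The gap is the bound $O_k=o(d^k)$ itself, which you assert but do not prove, and which in the weakly branch case carries the entire weight of the theorem; everything else is routine. The ingredients you list do not obviously deliver it: the inclusion $\rist_G(w)\supseteq\prod_{u\in V_{|w|+k}\cap T_w}\rist_G(u)$ combined with conjugation and level-transitivity only shows that $O_k/d^k$ is non-increasing in $k$ and strictly less than $1$ for each $k$; nothing in that forces the limit to be zero, and the ``careful inductive argument'' you defer to is precisely where every naive induction stalls, because composing rigid-stabilizer elements supported at nested (rather than incomparable) vertices gives no control on fixed sets. What is actually needed is this: $O_k/d^k\to 0$ is equivalent to $\mu$-almost every point of $\partial T_w$ having an infinite $\rist_G(w)$-orbit, and a point with finite orbit must eventually lie, along its ray, in the set $\Phi_u$ of points fixed by \emph{every} element of $\rist_G(u)$; so it suffices to prove $\mu(\Phi_u)=0$ for every vertex $u$. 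That last fact requires an ergodicity input: the set $\bigcup_{u\in V_m}\Phi_u$ is $G$-invariant, hence of measure $0$ or $1$ since the level-transitive action on $(\partial T_d,\mu_d)$ is ergodic, and measure $1$ would give $\mu(\Phi_u)=\mu(\partial T_u)$, i.e.\ $\rist_G(u)=\{e\}$, contradicting weak branchness. Without an argument of this kind (or an equivalent one), your Key Lemma remains unestablished and the proof is incomplete at its one essential point.
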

\begin{Th}\label{ThIndecomp} Assume that the action of a countable group $G$ on a Lebesgue probability space $(X,\Sigma,\mu)$ is ergodic, measure-preserving and perfectly non-free. Let $\pi$ be the associated groupoid representation and $\mathcal M_{\mathcal R}$ be the associated Murray-von Neumann (or Krieger) algebra. Then $\mathcal M_\pi=\mathcal M_{\mathcal R}$ and the corresponding character $\chi(g)=\mu(\fix_X(g)),g\in G,$ is indecomposable.
\end{Th}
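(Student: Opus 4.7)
The proof has two parts. First, I will show $\mathcal M_\pi = \mathcal M_{\mathcal R}$ by proving that every multiplication operator by an element of $L^\infty(X,\mu)$ (acting on the first coordinate) lies in $\mathcal M_\pi$. Since the countable family $\mathcal A$ from Definition~\ref{DefPNF} generates $\Sigma$ modulo null sets, it suffices to show $M_{\mathbbm{1}_A} \in \mathcal M_\pi$ for every $A \in \mathcal A$. Once this is done, indecomposability of $\chi$ will follow by restricting to the cyclic subspace $\mathcal H$ and invoking factoriality of $\mathcal M_{\mathcal R}$.

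Fix $A \in \mathcal A$ and let $H_A$ denote the subspace of $L^2(\mathcal R,\nu)$ of functions vanishing $\nu$-a.e.\ on $\{(x,y) \in \mathcal R : x \in A\}$. For every $g \in G_A$ one has $g^{-1}x = x$ for $\mu$-a.e.\ $x \notin A$, so $\pi(g)$ acts as the identity on $H_A$; hence $H_A \subset \fix(\pi(G_A))$. The key claim is the reverse inclusion. If $f \in L^2(\mathcal R,\nu)$ satisfies $\pi(g)f = f$ for every $g \in G_A$, then $f(gx,y) = f(x,y)$ for $\nu$-a.e.\ $(x,y)$ and every $g \in G_A$; thus for each fixed $y$ the function $f(\cdot,y)$ is constant on every $G_A$-orbit inside $Gy \cap A$. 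By hypothesis, the set $N = \{x \in A : G_A \cdot x \text{ is finite}\}$ is $\mu$-null, so its $G$-saturation $\widetilde N = \bigcup_{g \in G} gN$ is $\mu$-null as a countable union of measure-zero sets; for $y \notin \widetilde N$, every $x \in Gy \cap A$ has infinite $G_A$-orbit. Using the symmetric disintegration $\|f\|_2^2 = \int_X \sum_{x \in Gy} |f(x,y)|^2 \, \dd\mu(y)$ (valid because $\mu$ is $G$-invariant), $\ell^2$-summability along an infinite orbit on which $f(\cdot,y)$ is constant forces that constant to vanish. Hence $f$ is supported on $\{x \notin A\}$ modulo $\nu$-null sets, proving $\fix(\pi(G_A)) = H_A$.

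Consequently, the orthogonal projection onto $H_A$ coincides with multiplication by $\mathbbm{1}_{X\setminus A}$ on the first coordinate. Since $\pi(G_A)$ is $*$-closed, both $H_A = \fix(\pi(G_A))$ and $H_A^\perp$ are preserved by every element of $\pi(G_A)'$, so this projection lies in $\pi(G_A)' \cap \pi(G_A)'' \subset \mathcal M_\pi$. Thus $M_{\mathbbm{1}_A} \in \mathcal M_\pi$, and as $\mathcal A$ generates $\Sigma$, all of $L^\infty(X,\mu)$ lies in $\mathcal M_\pi$; so $\mathcal M_\pi = \mathcal M_{\mathcal R}$.

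For the indecomposability of $\chi$, note that $\chi(g) = \langle \pi(g)\xi,\xi\rangle$ is tracial and $(\pi|_{\mathcal H},\mathcal H,\xi)$ realizes its GNS construction; thus $\chi$ is indecomposable iff $(\pi(G)|_{\mathcal H})''$ is a factor. Since $\mathcal H$ is $\pi(G)$-invariant, the projection $P$ onto $\mathcal H$ lies in $\mathcal M_\pi'$, and standard compression gives $(\pi(G)|_{\mathcal H})'' = \mathcal M_\pi P$; cutting a factor by a nonzero projection in its commutant yields again a factor, so it suffices to show $\mathcal M_{\mathcal R}$ is a factor. This follows from ergodicity via the standard fact that the center of the equivalence-relation algebra equals $L^\infty(X,\mu)^G = \mathbb C$. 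The main technical obstacle is the fixed-space identification in the second paragraph: one must convert the qualitative hypothesis ``$G_A$-orbits on $A$ are infinite $\mu$-a.e.'' into a quantitative $\ell^2$-vanishing conclusion, which requires transferring the infinite-orbit property from $\mu$-a.e.\ $x \in A$ to $\mu$-a.e.\ $y \in X$ through the $G$-saturation trick and then exploiting the summability along infinite orbits.
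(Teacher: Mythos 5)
Your proof is correct. The paper imports Theorem \ref{ThIndecomp} from \cite{DG15 Diag} without reproving it, and your argument is essentially the one given there: identify $\fix(\pi(G_A))$ with the functions vanishing on $\{(x,y):x\in A\}$ via the infinite-orbit/$\ell^2$-summability dichotomy (after saturating the null exceptional set), conclude $M_{\mathbbm{1}_A}\in\mathcal M_\pi$ by the fixed-space-projection fact that the paper records separately as Lemma \ref{LmFixed}, and deduce indecomposability from ergodicity by compressing the factor $\mathcal M_\pi=\mathcal M_{\mathcal R}$ to the cyclic subspace $\mathcal H$.
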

\noindent In addition, the proof of Proposition 24 from \cite{DG15 Diag} immediately implies the following:
\begin{Lm}\label{LmSeqgn} Let $G$ be a weakly branch group acting on a regular rooted tree $T_d$. Then for any clopen set $A\subset \partial T_d$ there exists a sequence of elements $g_n\in G$ with $A\subset\fix_{\partial T_d}(g_{n+1})\subset\fix_{\partial T_d}(g_n)$ for every $n$ and $$\mu_d(\bigcap\limits_{n\in\mathbb N}\fix_{\partial T_d}(g_n)\setminus A)=0.$$
\end{Lm}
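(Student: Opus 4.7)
We construct $(g_n)_{n\ge 1}$ by induction on $n$, maintaining the stronger invariant that $\supp(g_n)$ is a clopen subset of $A^c$. The base case is $g_1 = e$, for which $\supp(g_1) = \varnothing$ is clopen and $\fix(g_1) = \partial T_d\supset A$.

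For the inductive step, assume $g_n$ has clopen support $S_n\subset A^c$, so $F_n:=A^c\setminus S_n$ is also clopen and decomposes as a finite disjoint union $F_n=\bigsqcup_{w\in W_n}\partial T_w$ of basic clopens. The key technical step is to produce, for each $w\in W_n$, an element $h_w\in\rist_G(w)\setminus\{e\}$ whose support in $\partial T_w$ is itself a clopen set of relative measure at least a universal constant $c=c(d)>0$. Granting this, set $h_{n+1}:=\prod_{w\in W_n}h_w\in G$ (well defined since the rigid stabilizers $\rist_G(w)$ pairwise commute for distinct $w$); its support $\supp(h_{n+1})=\bigsqcup_w\supp(h_w)$ is then clopen, contained in $F_n$, with $\mu_d(\supp(h_{n+1}))\ge c\,\mu_d(F_n)$. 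Since $\supp(g_n)\subset S_n$ and $\supp(h_{n+1})\subset F_n$ are disjoint, defining $g_{n+1}:=g_n h_{n+1}$ gives $\supp(g_{n+1})=S_n\sqcup\supp(h_{n+1})$, still clopen and contained in $A^c$; $\fix(g_{n+1})=\fix(g_n)\cap\fix(h_{n+1})$ still contains $A$ and is contained in $\fix(g_n)$; and $\mu_d(\fix(g_{n+1})\setminus A)\le(1-c)\,\mu_d(\fix(g_n)\setminus A)$.

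Iterating yields the geometric decay $\mu_d(\fix(g_n)\setminus A)\le(1-c)^{n-1}\mu_d(A^c)\to 0$. Since $\{\fix(g_n)\setminus A\}_n$ is a decreasing sequence of measurable sets, continuity of measure then gives $\mu_d\!\left(\bigcap_n\fix(g_n)\setminus A\right)=\lim_n\mu_d(\fix(g_n)\setminus A)=0$, which is the required conclusion.

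\textbf{Main obstacle.} The only nontrivial point is the production of $h_w\in\rist_G(w)$ with \emph{clopen} support of uniformly positive relative measure in $\partial T_w$. A priori, an element of $\rist_G(w)$ need not have clopen support. However, the weakly branch hypothesis gives that $\rist_G(w)$ is nontrivial and acts faithfully on $\partial T_w$, so any nonidentity element of $\rist_G(w)$ must nontrivially permute the children of some descendant $w'\in T_w$; for any pair of siblings $w'a,w'b$ exchanged (or cyclically permuted) by such an element, the clopen set $\partial T_{w'a}\sqcup\partial T_{w'b}$ is contained in its support, giving a clopen piece of relative measure $2/d$ in $\partial T_{w'}$. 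Controlling the depth of $w'$ below $w$ uniformly in $w$ requires first refining the decomposition $F_n=\bigsqcup_w\partial T_w$ to a sufficiently deep level so that the descent terminates at a uniformly bounded depth; this refinement is essentially the content of the proof of Proposition~24 of \cite{DG15 Diag}, from which the present lemma is extracted.
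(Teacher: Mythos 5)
Your overall architecture --- build $g_{n+1}=g_nh_{n+1}$ with $\supp(h_{n+1})$ disjoint from $\supp(g_n)$, so that $\fix(g_{n+1})=\fix(g_n)\cap\fix(h_{n+1})$ and the residual measure decays geometrically --- is the right shape, and it is in the spirit of the argument the paper relies on (note the paper gives no proof of this lemma at all: it states that it follows from the proof of Proposition~24 of \cite{DG15 Diag}). However, both claims on which your induction rests are unjustified, and as stated they fail in general. First, you give no reason why $\rist_G(w)$ should contain a nontrivial element with \emph{clopen} support. The support of $g\in\aut(T_d)$ is $\bigcup\{\partial T_u:gu\neq u\}$, which is open but typically not closed (for the first Grigorchuk group the generator $b$ has support $\partial T_0\cup\partial T_{10}\cup\partial T_{1110}\cup\cdots$, accumulating at the fixed ray $1^\infty$), and your own recipe for $h_w$ --- take any nontrivial element and locate a moved pair of sibling cylinders --- only exhibits a clopen \emph{piece} of the support. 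Your induction genuinely needs $S_n=\supp(g_n)$ to be clopen: this is what makes $F_n=A^c\setminus S_n$ a finite union of cylinders and, more importantly, what forces $\supp(h_{n+1})\subset F_n$ to be disjoint from $\supp(g_n)$. Without exact disjointness the required inclusion $\fix(g_{n+1})\subset\fix(g_n)$ can fail (a point moved by both $g_n$ and $h_{n+1}$ may be fixed by their product), and the identity $\supp(g_{n+1})=\supp(g_n)\sqcup\supp(h_{n+1})$ underlying your measure estimate breaks as well. Keeping track of only a clopen subset of the support does not repair this, because the uncontrolled remainder of $\supp(g_n)$ then spills into $F_n$.

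Second, the universal constant $c=c(d)>0$ does not come for free. What you actually produce is a swapped pair of cylinders of relative measure $2/d$ inside $\partial T_{w'}$ for a descendant $w'$ of $w$ at an uncontrolled depth, hence of relative measure $2d^{-1-k_w}$ inside $\partial T_w$ with $k_w$ depending on $w$; refining the partition of $F_n$ to a deeper level does not remove this dependence, since each new cylinder again requires its own uncontrolled descent. The standard repair uses level transitivity: conjugating one nontrivial element of one rigid stabilizer around a whole level makes the descent depth uniform within that level, so each inductive step can be run with a level-dependent constant $c_n=d^{-k_n}$; but then one must still arrange that $\prod_n(1-c_n)\to 0$, and one must account for the fixed points of $h_w$ lying in cylinders that $h_w$ supports only partially (once supports are not clopen, $\fix(g_n)\setminus A$ is no longer your $F_n$). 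These issues are exactly where the content of the lemma lies, and your write-up defers all of them to the external reference rather than resolving them. As it stands the proposal is a sensible plan with a genuine gap at its only nontrivial step.
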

\noindent Finally, let us recall a useful folklore fact (see \eg \cite{DG15 Diag}, Lemma 20):
 \begin{Lm}\label{LmFixed} Let $\kappa$ be a unitary representation of a group $\Gamma$ on a Hilbert space $H$. Set $H_1=\{\eta\in H:\kappa(g)\eta=\eta\;\;\text{for all}\;\;g\in \Gamma\}$. Then the orthogonal projection $P$ onto $H_1$ belongs to $\mathcal M_\kappa$.
 \end{Lm}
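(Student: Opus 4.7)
The plan is to prove this via the double commutant theorem: since $\mathcal M_\kappa$ is by definition the von Neumann algebra generated by the operators $\kappa(g)$, we have $\mathcal M_\kappa = \mathcal M_\kappa''$, so it suffices to show that $P$ commutes with every operator in the commutant $\mathcal M_\kappa'$.

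First I would fix $T \in \mathcal M_\kappa'$, so that $T\kappa(g) = \kappa(g)T$ for all $g \in \Gamma$. For $\eta \in H_1$ and any $g\in\Gamma$, the computation
\begin{equation*}
\kappa(g)\,T\eta \;=\; T\,\kappa(g)\eta \;=\; T\eta
\end{equation*}
shows that $T\eta \in H_1$, i.e.\ $T H_1 \subset H_1$. Hence $TP = PTP$. Because $\kappa$ is a unitary representation, $\kappa(g)^* = \kappa(g^{-1})$ lies in $\mathcal M_\kappa$, so $\mathcal M_\kappa$ is a $*$-algebra and therefore so is $\mathcal M_\kappa'$. In particular $T^* \in \mathcal M_\kappa'$, and the same argument gives $T^* H_1 \subset H_1$, whence $T^*P = PT^*P$. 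Taking adjoints yields $PT = PTP$.

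Combining the two identities gives $TP = PTP = PT$, so $T$ commutes with $P$. Since $T \in \mathcal M_\kappa'$ was arbitrary, $P \in \mathcal M_\kappa'' = \mathcal M_\kappa$. There is no real obstacle here: the only subtlety to keep in mind is that one needs the $*$-closedness of $\mathcal M_\kappa'$, which is guaranteed precisely because $\kappa$ takes values in the unitary group and therefore $\mathcal M_\kappa$ itself is a $*$-algebra.
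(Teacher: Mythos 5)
Your proof is correct: the paper does not reprove this lemma (it cites it as folklore from \cite{DG15 Diag}, Lemma 20), and your double-commutant argument --- showing $H_1$ is invariant under every $T\in\mathcal M_\kappa'$ and under $T^*$, hence $TP=PTP=PT$, so $P\in\mathcal M_\kappa''=\mathcal M_\kappa$ --- is exactly the standard argument behind that folklore fact. No gaps; the one subtlety you flag (that $\{\kappa(g)\}$ is a self-adjoint set because $\kappa(g)^*=\kappa(g^{-1})$, so the commutant is a $*$-algebra) is handled correctly.
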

%%%%%%%%%%%%%%%%%%%%%%%%%%%%%%%%%%%%%%%%%%%%%%%%%%%%%%%%%%%%%%%%%%%%%%%%%%%%%
\subsection{The main results.}\label{SubsecMain}
In this section we formulate the main results of this paper. Their proofs are given in the remaining sections.

Recall that to any weakly branch group $G$ acting on a regular rooted tree $T_d$ and any closed subset $C\subset \partial T_d$ we associated the IRS's $\mu_{[C]}$ and $\mu_{[C]}^\p$ (see \eqref{EqMuC} and \eqref{EqMupC}), which in turn give rise to characters $\chi_C,\;\psi_C=\chi_C^\p$, and $\psi_C^\p$ (see Lemma \ref{LmChiCLambda}). In the present paper we focus on the first two characters. 
Our first results (Propositions \ref{Prop3=>2=>1},  \ref{PropBranch=>3b}, and \ref{PropExample}, Lemma \ref{LmEmptyInterior=>3a}, and Corollary \ref{CoEmptyInterior=>Indecomposable}) are on indecomposability of the character $\psi_C$. Observe that for a clopen set $C$ the IRS $\mu_{[C]}$ is supported on a finite set of subgroups. We are interested in continuous IRS, so let us fix a clonopen subset $C\subset\partial T$. For any clopen set $B$ set
\begin{equation}\label{EqB*}B^*=\{C_0\in[C]:B\cap C_0\;\text{is not open}\}.\end{equation} Notice that $B^*$ depends on $C$.
  For a subset $C\subset \partial T_d$  and a vertex $v\in T_d$ set $C_v=\partial T_v\cap C$. Notice that by Lemma \ref{LmChiCLambda} the character $\psi_C$ on $G$ is naturally associated to the action of $G$ on $([C],\lambda_{[C]})$. Let $V$  be the vertex set of the tree $T_d$. Consider the following conditions:
  \begin{itemize}
\item[$1)$] $\psi_C$ is indecomposable;
\item[$2)$] the action of $G$ on $([C],\lambda_{[C]})$ is perfectly non-free;
\item[$3a)$] the collection of the sets $\{(\partial T_v)^*:v\in V\}$ separates points of $[C]$;
\item[$3b)$] for any $v\in V$ and any $C_0\in(\partial T_v)^*$ the orbit $\rist_G(v)C_0$ is infinite.
\end{itemize}
We will use symbol $"\wedge"$ to denote the union of conditions. Using Theorem \ref{ThIndecomp} immediately implies:
\begin{Prop}\label{Prop3=>2=>1} For any weakly branch group acting on a regular rooted tree $T_d$, $d\geqslant 2$, and any closed subset $C\subset\partial T_d$ one has $3a)\wedge 3b)\Rightarrow 2)\Rightarrow 1).$
\end{Prop}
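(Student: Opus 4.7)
My plan is to prove the two implications separately.

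For $2)\Rightarrow 1)$, the approach is to apply Theorem~\ref{ThIndecomp} directly. By Lemma~\ref{LmChiCLambda}, $\psi_C(g)=\lambda_{[C]}(\{B\in[C]:gB=B\})$ is precisely the character $g\mapsto\mu(\fix(g))$ attached to the action of $G$ on $([C],\lambda_{[C]})$ that appears in Theorem~\ref{ThIndecomp}. That action is ergodic as the push-forward, under $g\mapsto gC$, of the ergodic left-multiplication action of $G$ on $(\overline G,\lambda)$. Since hypothesis 2) supplies perfect non-freeness, Theorem~\ref{ThIndecomp} immediately yields indecomposability of $\psi_C$.

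For $3a)\wedge 3b)\Rightarrow 2)$, my plan is to verify Definition~\ref{DefPNF} using the countable family
\[
\mathcal A\;=\;\{(\partial T_v)^*:v\in V\}.
\]
Condition 3a) supplies the generating requirement via Rokhlin's basis theorem: $\mathcal A$ separates the points of $[C]$ by hypothesis, so $\mathcal A$ together with the $\lambda_{[C]}$-null sets generates the Borel $\sigma$-algebra on $[C]$. For the orbit condition, I plan to exploit the inclusion $\rist_G(v)\subseteq G_{(\partial T_v)^*}$ modulo null sets, which combined with 3b) yields $G_{(\partial T_v)^*}\cdot B\supseteq\rist_G(v)\cdot B$, infinite for every $B\in(\partial T_v)^*$.

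The main obstacle is establishing this inclusion. For $g\in\rist_G(v)$, $g$ fixes $\partial T_d\setminus\partial T_v$ pointwise and $\partial T_v$ setwise, so $gB=B$ whenever $B\cap\partial T_v$ is either empty or equal to $\partial T_v$. Consequently, the only way to have $B\notin(\partial T_v)^*$ and $gB\neq B$ simultaneously is for $B\cap\partial T_v$ to be a proper nonempty clopen subset of $\partial T_v$, i.e., a finite disjoint union of cylinder boundaries. The inclusion thus reduces to verifying that the $\lambda_{[C]}$-measure of the set of such $B$ vanishes. When $C$ has empty interior this is immediate, since every translate $hC$ has empty interior and cannot meet $\partial T_v$ in a nontrivial clopen; in the general clonopen case the same conclusion should follow from a short Haar-measure argument on $\overline G$ identifying the $h$ with $hC\cap\partial T_v$ clopen as a $\lambda$-null set.
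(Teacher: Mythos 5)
Your implication $2)\Rightarrow 1)$ is exactly the paper's argument (ergodicity of $\lambda_{[C]}$ as a push-forward of the ergodic Haar system plus Theorem \ref{ThIndecomp}), and your overall scheme for $3a)\wedge 3b)\Rightarrow 2)$ — use $\mathcal A=\{(\partial T_v)^*\}$, get the generating property from $3a)$ via Rokhlin's basis theorem, and get infinite orbits on $(\partial T_v)^*$ from $3b)$ — is also the paper's. The problem is the step you yourself flag as the main obstacle: the claimed inclusion $\rist_G(v)\subseteq G_{(\partial T_v)^*}$ modulo null sets is false in general, because the set of $B\in[C]$ with $B\cap\partial T_v$ a proper nonempty clopen subset of $\partial T_v$ need not be $\lambda_{[C]}$-null. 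Concretely, take $d=2$, $G=\autf(T_2)$, $C=\partial T_{00}\cup D$ with $D$ a clonopen subset of $\partial T_1$, and $v=0\in V_1$. For every $h\in\aut(T_2)=\overline G$ with $h(0)=0$ (a set of Haar measure $\tfrac12$) one has $hD\subset\partial T_1$ and hence $hC\cap\partial T_0=\partial T_{h(00)}$, a proper nonempty clopen subset of $\partial T_0$; the element $g\in\rist_G(0)$ swapping the two subtrees below $0$ then moves all these $hC$, so $\lambda_{[C]}(\supp(g)\setminus(\partial T_0)^*)\geqslant\tfrac12$ and $g\notin G_{(\partial T_0)^*}$. (This particular $C$ happens to violate $3a)$, so it does not contradict the Proposition, but your argument never invokes $3a)$ at this step, so as written the "short Haar-measure argument" you appeal to does not exist; you only have the inclusion in the empty-interior case.)

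The paper's fix is to replace $\rist_G(v)$ by the finite-index subgroup $H_v=\rist_G(v)\cap\stab_G(N)$, where, for $v\in V_n$, $N$ is chosen so large that $\partial T_w\cap C$ is a union of level-$N$ cylinders for every $w\in V_n$ for which this intersection is open ($N$ is finite because $V_n$ is finite). Then for any $C_0=hC$ with $C_0\cap\partial T_v$ open, that intersection is a union of level-$N$ cylinders, each of which is preserved by $\stab_G(N)$, so every element of $H_v$ fixes $C_0$ setwise; hence $H_v<G_{(\partial T_v)^*}$ \emph{exactly}, with no null-set caveat and no restriction on the interior of $C$. Since $H_v$ has finite index in $\rist_G(v)$, condition $3b)$ still forces the $H_v$-orbit (and a fortiori the $G_{(\partial T_v)^*}$-orbit) of every $C_0\in(\partial T_v)^*$ to be infinite. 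You should adopt this finite-index reduction; without it, your proof covers only the case where $C$ has empty interior.
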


We do not know whether $3a)\wedge 3b)$ is equivalent to the condition $1)$ for weakly branch groups. But for branch groups we show
\begin{Prop}\label{PropBranch=>3b}
For any branch group $G$ acting on $T_d$ and any clonopen set $C\subset\partial T_d$ the condition $3b)$ is satisfied.
\end{Prop}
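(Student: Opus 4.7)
\medskip
\noindent \textbf{Proof plan.} Fix $v\in V$ and $C_0\in (\partial T_v)^*$, and set $D:=C_0\cap \partial T_v$; by assumption $D$ is closed in $\partial T_v$ but not open. Since every $g\in \rist_G(v)$ fixes $\partial T_d\setminus \partial T_v$ pointwise, $gC_0=C_0$ is equivalent to $gD=D$. It therefore suffices to exhibit infinitely many distinct sets of the form $gD$ with $g\in \rist_G(v)$.

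The plan is to produce such elements by exploiting the failure of openness of $D$ at a specific point. Pick $x\in D$ which is not interior to $D$ in $\partial T_v$; such $x$ exists because $D$ is closed but not open. For each $k\geqslant 1$ let $v_k$ be the ancestor of $x$ at level $|v|+k$, so that the clopen sets $\partial T_{v_k}$ form a shrinking neighborhood basis of $x$, with $\partial T_{v_k}\cap D\ni x$ and $\partial T_{v_k}\not\subset D$ for every $k$. I would then construct, for each $k$, an element $g_k\in \rist_G(v_k)\subset \rist_G(v)$ whose support lies in the shell $\partial T_{v_k}\setminus \partial T_{v_{k+1}}$ (equivalently, $g_k$ fixes $\partial T_{v_{k+1}}$ pointwise) and satisfies $g_kD\neq D$. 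Since these shells are pairwise disjoint, $g_kD\triangle D$ is confined to the $k$th shell; for $k<m$ the set $g_mD$ then agrees with $D$ on the $k$th shell while $g_kD$ does not, so $g_kD\neq g_mD$ and the orbit $\rist_G(v)\cdot C_0$ is infinite.

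Constructing $g_k$ is where the branch, as opposed to merely weakly branch, hypothesis enters. The shell decomposes as $\bigsqcup_{b\neq b_k}\partial T_{v_k b}$, where $b_k$ is the letter with $v_{k+1}=v_k b_k$, and the ``shell stabilizer'' inside $\rist_G(v_k)$ contains the direct product $\prod_{b\neq b_k}\rist_G(v_k b)$ together with the elements of $\rist_G(v_k)$ permuting the sibling cylinders $\{\partial T_{v_k b}\}_{b\neq b_k}$. Using that $\rist_G(m)$ has finite index in $G$ for every $m$ combined with level-transitivity of $G$, one verifies that this shell stabilizer acts richly enough on $\partial T_{v_k}$ to move $D$ whenever $D$ meets the shell in a non-trivial way. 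The only degenerate situation is when $D\cap(\partial T_{v_k}\setminus \partial T_{v_{k+1}})=\emptyset$ for all large $k$, which forces $D\cap \partial T_{v_{k_0}}=\{x\}$ for some $k_0$; there the required infinite orbit follows instead from the fact that for branch groups the $\rist_G(v_{k_0})$-orbit of any point of $\partial T_{v_{k_0}}$ is infinite and (by minimality of this action) cannot be contained in the non-open set $D$, so infinitely many $g\in \rist_G(v_{k_0})$ send $x$ to distinct points outside $D$, yielding distinct $gD$.

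The main obstacle is thus the uniform shell construction: one must check, in every possible configuration of how $D$ meets the sibling cylinders $\{\partial T_{v_k b}\}_b$, that the branch axioms really do produce a non-trivial element $g_k$ with support in the shell. This forces a small case analysis and uses in an essential way the finite-index property of level-stabilizers that distinguishes branch from merely weakly branch groups.
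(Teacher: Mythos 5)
There is a genuine gap in your case analysis, and it undermines the whole shell strategy. Your dichotomy is: either for infinitely many $k$ some element supported in the shell $\partial T_{v_k}\setminus\partial T_{v_{k+1}}$ moves $D$, or else $D\cap\partial T_{v_{k_0}}=\{x\}$ for some $k_0$. This dichotomy is false. Let $x$ be the point of non-openness and set
\[
D=\{x\}\cup\bigcup_{k\ \mathrm{even}}\bigl(\partial T_{v_k}\setminus\partial T_{v_{k+1}}\bigr).
\]
The shells accumulate only at $x$, so $D$ is closed; and $D$ is not open at $x$ because every neighbourhood $\partial T_{v_m}$ of $x$ contains an odd-indexed shell, which is disjoint from $D$. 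Yet $D$ meets each shell either in the empty set or in the entire shell, so every $g\in G$ supported in a shell maps that shell to itself, preserves $D$ inside it, and fixes everything outside it pointwise; hence $gD=D$. Thus no $g_k$ as in your plan exists for any $k$, while at the same time $D\cap\partial T_{v_{k_0}}\neq\{x\}$ for every $k_0$, so your fallback case does not apply either. (A secondary issue: you invoke elements of $\rist_G(v_k)$ ``permuting the sibling cylinders''; a branch group need not contain any such elements --- the branch axioms only guarantee that $\prod_{b}\rist_G(v_kb)$ has finite index in $\rist_G(v_k)$, not that sibling-permuting elements supported in the shell exist.)

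The way out --- and this is what the paper's proof does --- is to move the boundary point $x$ itself rather than to perturb $D$ on shells away from $x$. Since $Gx$ is dense in $\partial T_d$ and $\rist_G(n)$ has finite index in $G$ (this is exactly where branchness, as opposed to weak branchness, enters), a pigeonhole argument over the finitely many cosets shows that the orbit $\rist_G(v)x$ is dense in some cylinder neighbourhood of $x$. Because $x$ lies in the closure of $\partial T_v\setminus D$ and $D$ is closed, that cylinder contains a subcylinder disjoint from $D$; choosing $h_1\in\rist_G(v)$ sending $x$ into it gives $h_1x\notin D$, hence $h_1D\neq D$. Iterating --- pushing $x$ via $h_{n+1}\in\rist_G(v)$ into ever smaller nested cylinders chosen disjoint from $h_nD$ --- yields $h_nx\notin h_mD$ for all $n>m$, hence infinitely many distinct translates. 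In the example above this argument succeeds immediately, whereas shell-supported elements cannot distinguish $D$ from its translates at all.
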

\begin{Rem}\label{RemBranchEmpty=>chi=phi} Proposition \ref{PropBranch=>3b} implies that for any branch group $G$ and any closed set $C$ with empty interior one has $\chi_C=\phi_C$. Indeed, by Lemma \ref{LmChiCLambda} it is sufficient to show that for any $B\in[C]$ and $g\in G$ the identity $\stab(B)=\stab(gB)$ implies $B=gB$. Assume that $gB\neq B$. Let $x\in B$ be such that $gx\notin B$. Since $B$ has empty interior, there exists a vertex $v$ such that $x\in \partial T_v$ and $\partial T_v\cap B=\varnothing$. By Proposition \ref{PropBranch=>3b} there exists $h\in\rist_G(v)$ such that $hgB\neq gB$. Then $h\in \stab(B)\setminus \stab(gB)$ and so $\stab(B)\neq \stab(gB)$. Thus, the restrictions of $\mathcal X$ and $\Psi$ on $\{\mu_{[C]}:C\in\mathcal C,\;C\;\text{has empty interior}\}$ coincide, which gives a partial positive answer to the question (i) from Subsection \ref{SubsecCharactersIRS}.
\end{Rem}

We emphasize that in the next lemma the group $G$ is not required to be branch or weakly branch.
\begin{Lm}\label{LmEmptyInterior=>3a} Let $G$ be an arbitrary subgroup of $\aut(T_d)$. Assume that a closed set $C\subset\partial T_d$ has an empty interior. Then the condition $3a)$ is satisfied.
\end{Lm}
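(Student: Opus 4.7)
The plan is to show that for closed sets with empty interior the membership in $(\partial T_v)^*$ is characterized by a very simple condition, which then makes separation of points an immediate consequence of closedness.

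The first step is to observe that since $\overline{G}$ acts on $\partial T_d$ by homeomorphisms, every element of the orbit $[C]$ is a homeomorphic image of $C$, and therefore also has empty interior in $\partial T_d$.

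The second step is the key characterization: for any $C_0 \in [C]$ and any vertex $v \in V$,
\begin{equation*}
C_0 \in (\partial T_v)^* \quad \Longleftrightarrow \quad \partial T_v \cap C_0 \neq \varnothing.
\end{equation*}
Indeed, if $\partial T_v \cap C_0 = \varnothing$, then this intersection is vacuously open in $\partial T_d$, so $C_0 \notin (\partial T_v)^*$. Conversely, if there exists $x \in \partial T_v \cap C_0$, then any neighborhood of $x$ in $\partial T_d$ meets the complement of $C_0$ (otherwise $C_0$ would have nonempty interior at $x$). Since $\partial T_v$ is itself a neighborhood of $x$, this shows that $\partial T_v \cap C_0$ is not a neighborhood of $x$ in $\partial T_d$, hence is not open in $\partial T_d$, so $C_0 \in (\partial T_v)^*$.

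The third step concludes the separation. Let $C_1, C_2 \in [C]$ with $C_1 \neq C_2$. Without loss of generality, pick $x \in C_1 \setminus C_2$. Since $C_2$ is closed in $\partial T_d$ and $x \notin C_2$, the basic clopen neighborhoods $\partial T_v$ of $x$ form a descending base of neighborhoods of $x$, so there exists a vertex $v$ with $x \in \partial T_v$ and $\partial T_v \cap C_2 = \varnothing$. By the characterization in step two, $C_1 \in (\partial T_v)^*$ (since $x \in \partial T_v \cap C_1$) while $C_2 \notin (\partial T_v)^*$. Hence the collection $\{(\partial T_v)^* : v \in V\}$ separates points of $[C]$, which is condition $3a)$.

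There is no real obstacle here; the entire argument rests on the elementary fact that closed sets with empty interior in a Cantor set have no relatively open nonempty intersection with any basic clopen set, and that closedness of $C_2$ provides a basic clopen neighborhood of $x$ separating $x$ from $C_2$. Note in particular that no branch or weakly branch hypothesis on $G$ is used, matching the statement of the lemma.
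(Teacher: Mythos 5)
Your proof is correct and follows essentially the same route as the paper: pick $x\in C_1\setminus C_2$, use closedness of $C_2$ to find a cylinder $\partial T_v\ni x$ disjoint from $C_2$, and use the empty interior of the orbit elements to conclude $C_1\in(\partial T_v)^*$ while $C_2\notin(\partial T_v)^*$. The only difference is presentational: you isolate the characterization $C_0\in(\partial T_v)^*\Leftrightarrow \partial T_v\cap C_0\neq\varnothing$ as an explicit intermediate step (and correctly attribute the separating cylinder to closedness of $C_2$ rather than to its empty interior), which the paper leaves implicit.
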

\noindent Combining Propositions \ref{Prop3=>2=>1} and \ref{PropBranch=>3b} and Lemma \ref{LmEmptyInterior=>3a} we immediately obtain:
\begin{Co}\label{CoEmptyInterior=>Indecomposable} For any branch group $G$ acting on $T_d$ and any closed set $C\subset\partial T_d$ with empty interior the action of $G$ on $([C],\lambda_{[C]})$ by translations is perfectly-nonfree and the character $\psi_C$ is indecomposable.
\end{Co}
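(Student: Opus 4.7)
The plan is to derive the corollary as an essentially formal consequence by chaining together the three preceding results in the stated order, once one verifies that the hypotheses on $C$ put us into the setting of each.

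First, I would observe that a nonempty closed set $C \subset \partial T_d$ with empty interior is clonopen in the sense of the paper (closed but not open), so we are genuinely in the setting of the characters $\psi_C$ attached to continuous IRS's and the IRS $\mu_{[C]}$ is non-trivial. (If $C = \emptyset$, the statement is vacuous.)

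Next, I would verify the two conditions $3a)$ and $3b)$ from the list preceding Proposition \ref{Prop3=>2=>1}. Condition $3a)$ is exactly the conclusion of Lemma \ref{LmEmptyInterior=>3a}: since $C$ has empty interior, the collection of sets $\{(\partial T_v)^*: v \in V\}$ separates points of $[C]$. This lemma applies to arbitrary subgroups of $\aut(T_d)$, so it covers the branch group $G$. Condition $3b)$ is exactly the conclusion of Proposition \ref{PropBranch=>3b}: since $G$ is branch and $C$ is clonopen, for every vertex $v$ and every $C_0 \in (\partial T_v)^*$ the orbit $\rist_G(v) C_0$ is infinite.

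Finally, I would invoke Proposition \ref{Prop3=>2=>1}, which gives the implication $3a) \wedge 3b) \Rightarrow 2) \Rightarrow 1)$. Condition $2)$ says the action of $G$ on $([C], \lambda_{[C]})$ is perfectly non-free, and condition $1)$ says $\psi_C$ is indecomposable; together these yield both conclusions of the corollary. Since each of the three ingredients is proved elsewhere in the paper, there is no genuine obstacle here: the only minor point worth mentioning in the proof is that "closed with empty interior" (nonempty) implies "clonopen", which is immediate because a nonempty open subset of $\partial T_d$ always has nonempty interior (being itself open), so a closed set with empty interior cannot be open unless it is empty.
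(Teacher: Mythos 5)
Your proposal is correct and follows exactly the paper's own (one-line) derivation: the corollary is obtained by combining Lemma \ref{LmEmptyInterior=>3a} (giving condition $3a)$), Proposition \ref{PropBranch=>3b} (giving condition $3b)$), and Proposition \ref{Prop3=>2=>1} (giving $3a)\wedge 3b)\Rightarrow 2)\Rightarrow 1)$). Your additional remark that a nonempty closed set with empty interior is clonopen, which is needed to apply Proposition \ref{PropBranch=>3b}, is a correct and worthwhile clarification that the paper leaves implicit.
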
\noindent Corollary \ref{CoEmptyInterior=>Indecomposable} shows for any branch group $G$ that \begin{equation}\Psi(\{\mu_{[C]}:C\in\mathcal C,\;C\;\text{has empty interior}\})\subset\ich(G)\end{equation} and gives a partial positive answer for the question (iii) from Subsection \ref{SubsecCharactersIRS}.

The next example shows that the condition of empty interior in Corollary \ref{CoEmptyInterior=>Indecomposable} cannot be dropped.
\begin{Ex}\label{ExPsiDecomposable} Consider the group $G=\autf(T_3)$. Let $v$ be a vertex of the first level of $T_3$ and let $x\in\partial T_3\setminus \partial T_v$. Set $C=\partial T_v\cup x$. Observe that the IRS $\mu_{[C]}$ is ergodic by Theorem \ref{ThBT}. One can check that for any $g_1,g_2,g_3\in G$
\begin{equation*}\psi_C(\sigma(g_1,g_2,g_3))=\tfrac{1}{3}\sum
\limits_{i=1}^3 \lambda(\fix(g_i)),\;\psi_C(\sigma(g_1,g_2,g_3))=0,\;\text{if}\;\sigma\neq\id.
\end{equation*} Thus, $\psi_C(g)=\chi_\reg(\sigma)\psi_1(g)$, where $\chi_\reg(\sigma)=\delta_{\sigma,\id}$ is the regular character on the group of permutations $S(V_1)$ of $V_1$, and $\psi_1(g)=\lambda(\fix(g))$. The character $\chi_\reg$ is a convex combination of three indecomposable characters on $S(V_1)$ labeled by Young diagrams:
$$\chi_\reg=\frac{1}{6}(\chi_{(3)}+4\cdot\chi_{(2,1)}+\chi_{(1,1,1)}).$$ Each of the above characters extends from $S(V_1)$ onto $G$ by the formula $\chi(\sigma(g_1,g_2,g_3)):=\chi(\sigma)$. Thus, $\psi_C$ can be represented as a convex combination of three distinct characters:
 $$\psi_C=\frac{1}{6}(\chi_{(3)}\psi_1+4\cdot\chi_{(2,1)}\psi_1+\chi_{(1,1,1)}\psi_1).$$ Thus, the character $\psi_C$ is decomposable. In addition, the latter implies that the action of $G$ on $([C],\lambda_{[C]})$ by translations is not perfectly non-free.\end{Ex} \noindent In addition, the latter example shows that $\psi_C=\psi_{\mu_{[C]}}$ does not need to be indecomposable even if the IRS $\mu_{[C]}$ is ergodic.

Next, we show that in case of a weakly branch group the situation with the character $\psi_{C}$ can be very different from the case of a branch group. Let us say that a character $\chi$ decomposes into an integral of continuum many indecomposable characters if there exists a probability space $(X,\eta)$ and a set of pairwise distinct characters $\chi_s,s\in X,$ on $G$ such that for every $g\in G$ the map $s\to \chi_s(g)$ is integrable with respect to $\eta$ and $$\chi(g)=\int\limits_{s\in X}\chi_s(g)\dd\eta(s)$$ for every $g\in G$.
\begin{Prop}\label{PropExample} There exists a weakly branch group $G$ acting on the binary rooted tree $T_2$ and a closed subset $C\subset \partial T_2$ with empty interior such that the character $\psi_C=\chi_C^\p$ decomposes into an integral of continuum many indecomposable characters. Moreover, the condition $3b)$ is not satisfied for the action of $G$ on $([C],\lambda_{[C]})$.
\end{Prop}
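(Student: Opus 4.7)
The plan is to construct an explicit weakly branch (but not branch) group $G < \aut(T_2)$ together with a closed set $C \subset \partial T_2$ with empty interior, exhibiting simultaneously the failure of condition $3b)$ and the continuous decomposition of $\psi_C$. Proposition \ref{PropBranch=>3b} forces branch groups to satisfy $3b)$, so the construction must use a genuinely non-branch group; the idea is to introduce a self-similar element whose rigid-stabilizer structure has nontrivial setwise invariants on $\partial T_v$ for some $v$.

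For the group, I would define $h \in \aut(T_2)$ by the self-similar recursion $h = (\sigma, h)$ (with no permutation at the root), so $h$ restricts to the root swap $\sigma$ on $T_0$ and self-similarly to $h$ on $T_1$, giving $\fix(h) = \{1^\infty\}$. Take $G = \langle \sigma, h\rangle$, supplemented if necessary by auxiliary generators chosen so that $G$ remains within the stabilizer of $1^\infty$ along the appropriate branch of the tree while still providing nontrivial rigid stabilizers at every vertex. I would verify in turn that $G$ acts level-transitively, is weakly branch but not branch, and—most importantly—that for $v = 0$ every element of $\rist_G(v)$ fixes the distinguished point $p_v := 01^\infty \in \partial T_v$. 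This last property follows by an inductive analysis of the wreath decomposition: every $g \in \rist_G(0)$ has the form $(g_0, 1)$, and one shows that the admissible restrictions $g_0$ (generated by restrictions such as the images of $\sigma h^{2k} \sigma^{-1}$ and their commutators) all lie in the stabilizer of $1^\infty$ in $\aut(T_0) \cong \aut(T_2)$.

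Now take $C = \{01^\infty\} \cup K$, where $K \subset \partial T_1$ is a Cantor-type closed subset of empty interior, chosen so that $01^\infty$ remains separated from $\overline G \cdot K$. Then $C$ is closed with empty interior and $C \cap \partial T_0 = \{01^\infty\}$ is a non-open singleton, so $C \in (\partial T_0)^*$. Since $\rist_G(0)$ fixes $01^\infty$ and acts trivially on $\partial T_1 \supset K$, we have $\rist_G(0) \cdot C = \{C\}$, a one-element orbit; hence $3b)$ fails. To decompose $\psi_C$, consider the $\overline G$-equivariant continuous projection $\pi \colon [C] \to \partial T_2$, $C' = gC \mapsto g(01^\infty)$, well-defined by the choice of $K$. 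The pushforward $\pi_*\lambda_{[C]}$ is a $\overline G$-invariant measure $\eta$ on $\overline G \cdot 01^\infty$, and disintegrating $\lambda_{[C]} = \int \nu_y \, d\eta(y)$ along $\pi$ substituted into \eqref{EqCharacterFormulas} yields
\begin{equation*}
\psi_C(g) = \int \nu_y\bigl(\{C' \in \pi^{-1}(y) : gC' = C'\}\bigr) \, d\eta(y).
\end{equation*}
This is the central decomposition of $\psi_C$. Each fiber character $\chi_y$ is indecomposable by applying Theorem \ref{ThIndecomp} to the action of $\stab_G(y)$ on $(\pi^{-1}(y), \nu_y)$, shown to be perfectly non-free by a fiber-wise adaptation of the absolute non-freeness argument of Theorem \ref{ThWBANF} (leveraging the weak branching and the freeness on the $K$-part). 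Distinct $G$-orbits on $\partial T_2$ give rise to distinct fiber characters, and these orbits are continuum-many since $G$ is countable while $\partial T_2$ is uncountable.

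The main obstacle is twofold: first, rigorously establishing that every element of $\rist_G(0)$ fixes $01^\infty$, which requires a careful description of the image of the restriction homomorphism in the self-similar group beyond the obvious generators; and second, verifying the fiber-wise perfect non-freeness and the distinctness of the $\chi_y$, which hinges on a sufficiently generic choice of $K$ so that the fiber dynamics remains rich enough to apply Theorem \ref{ThIndecomp} uniformly in $y$.
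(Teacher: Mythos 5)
Your construction does not go through as written, for two independent reasons. First, the group: with $h=(\sigma,h)$ one computes $h^2=(\sigma^2,h^2)=(e,h^2)$, whence $h^2=e$ by induction on levels, so $\langle\sigma,h\rangle$ is generated by two involutions and is a (possibly infinite) dihedral, hence virtually cyclic, group. Such a group cannot be weakly branch: weak branchness forces every $\rist_G(v)$ to be infinite, so $G$ contains $\rist_G(0)\times\rist_G(1)$, a product of two infinite groups, which no virtually cyclic group contains. The phrase ``supplemented if necessary by auxiliary generators'' therefore carries the entire burden of the construction, and no candidate generators are given for which level-transitivity, weak branchness, and the key property that every element of $\rist_G(0)$ fixes $01^\infty$ are verified. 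For comparison, the paper takes the concrete group generated by the single-vertex transpositions $\sigma_v$ at all even levels together with the full level transpositions $h_l=\prod_{v\in V_l}\sigma_v$ at odd levels (weakly branch but not branch by \cite{DG15 Diag}), and the set $C$ of boundary sequences whose even coordinates all equal $0$; there $\rist_G(v)$, $v\in V_1$, is generated by even-level $\sigma_w$'s and fixes $C$ pointwise while $C\cap\partial T_v$ is not open, so $3b)$ fails by inspection.

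Second, and more seriously, your decomposition is not a decomposition into characters of $G$. Disintegrating $\lambda_{[C]}$ along the $G$-equivariant map $\pi$ does yield the identity $\psi_C(g)=\int\nu_y(\{C'\in\pi^{-1}(y):gC'=C'\})\,d\eta(y)$, but the integrand $\chi_y(g):=\nu_y(\{C'\in\pi^{-1}(y):gC'=C'\})$ vanishes off $\stab_G(y)$ and satisfies $\chi_y(kgk^{-1})=\chi_{k^{-1}y}(g)$ by uniqueness of the disintegration, so it is not a central function on $G$ (nor do you argue it is positive definite on all of $G$); applying Theorem \ref{ThIndecomp} to the $\stab_G(y)$-action only produces characters of the stabilizer subgroups, not of $G$. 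Averaging over $G$-orbits does not repair this, since the orbits in $\overline{G}\cdot 01^\infty$ are countable and $\eta$-null. Consequently neither the indecomposability nor the pairwise distinctness of continuum many components is established. The paper's decomposition is of a different nature: one computes that $\psi_C$ is the indicator of the normal subgroup generated by the even-level $\sigma_v$'s, i.e.\ the pullback of the regular character of the infinite abelian quotient $G/N\cong\bigoplus\mathbb Z/2\mathbb Z$, and then writes it as an integral of one-dimensional characters $\chi_s$ (group homomorphisms to $\{-1,1\}$, which are automatically indecomposable and pairwise distinct) against the Bernoulli measure on the dual. To salvage your approach you would need both a fully specified group and a genuinely central, positive-definite family of components; as it stands the argument has a gap at its core.
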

\noindent Thus, neither in Proposition \ref{PropBranch=>3b}, nor in Corollary \ref{CoEmptyInterior=>Indecomposable}  "branch" cannot be replaced with "weakly branch".

Our next results concern the question whether the characters associated to distinct IRS's are distinct. The following result is dedicated to the correspondence $\mu_{[C]}^\p\to \chi_C^\p$. Recall that for a weakly branch group the IRS $\mu_{[C_1]}^\p,\mu_{[C_2]}^\p$ are distinct for any two distinct classes $[C_1],[C_2]\in\mathcal C$ (see Theorem \ref{ThBT}).
\begin{Th}\label{ThMain} Let $G$ be a weakly branch group acting on a $d$-regular rooted tree $T=T_d$. Then the following holds.
\begin{itemize}
\item[$1.$] There exists a continuum $\mathfrak C_1$ of classes $[C],C\in\mathcal C$, such that for any distinct classes $[C_1], [C_2]\in\mathfrak C_1$ one has $\mu_{[C_1]}=\mu_{[C_2]}$, and so $\chi_{C_1}^\p=\psi_{C_1}=\psi_{C_2}=\chi_{C_2}^\p$. In addition, for every $[C]\in\mathfrak C_1$ the character $\chi_C^\p$ is indecomposable.%, but $\mu_{[C_1]}^\p\neq\mu_{[C_2]}^\p$.
\item[$2.$] There exists a continuum $\mathfrak C_2$  of classes $[C],C\in\mathcal C$ such that for any distinct classes $[C_1], [C_2]\in\mathfrak C_2$ one has $\chi_{C_1}^\p\neq\chi_{C_2}^\p$. In addition, if $G$ is branch, then for every $[C]\in\mathfrak C_2$ the character $\chi_C^\p$ is indecomposable.
\end{itemize}
\end{Th}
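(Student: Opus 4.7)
The plan is to build both families as sub-collections of the classes produced by the Bencs--T\'oth construction (Theorem \ref{ThBT}), which already provides continuum many pairwise distinct $\bar G$-orbits $[C]$ of clonopen sets, distinguished by their pointwise-stabilizer IRSs $\mu^{\mathrm p}_{[C]}$.

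For Part 1 I would take $\mathfrak C_1$ to be the sub-family of Bencs--T\'oth classes $[C]$ whose set stabilizer $\bar G_C := \{h\in\bar G: hC=C\}$ in $\bar G$ is trivial. For such $C$, the map $h\mapsto hC$ is a $\bar G$-equivariant measure isomorphism $(\bar G,\lambda)\to([C],\lambda_{[C]})$, and the set stabilizer in $G$ of any $B=hC\in[C]$ is $G\cap h\bar G_C h^{-1}=\{e\}$. Therefore $\mu_{[C]}=\delta_{\{e\}}$ for every $[C]\in\mathfrak C_1$, which gives $\mu_{[C_1]}=\mu_{[C_2]}$ and, via Lemma \ref{LmChiCLambda}, the equality $\chi^\p_{C_1}=\psi_{C_1}=\psi_{C_2}=\chi^\p_{C_2}$. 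An explicit computation from Lemma \ref{LmChiCLambda} shows this common character equals $\chi_\reg$, which is indecomposable because weakly branch groups are ICC (Grig11 Theorem 9.17 together with Lemma 5.3.4 of Murray--von Neumann). The remaining task is a genericity check: show that continuum many codes in the Bencs--T\'oth coding space yield $C$ with trivial $\bar G$-stabilizer. I would establish this by observing that a nontrivial $\bar G$-symmetry of $C$ forces a coincidence of levelwise pieces of $C$ under some level-automorphism, a condition easily arranged to fail on a continuum sub-family of codes.

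For Part 2 I would select $\mathfrak C_2$ consisting of classes $[C]$ where $C$ has empty interior and where the character $\psi_C$ carries an invariant that distinguishes the class. Using the formula $\psi_C(g)=\lambda_{[C]}(\{B\in[C]:gB=B\})$ from Lemma \ref{LmChiCLambda}, I would parameterize $C_t$ ($t$ ranging over a Cantor-type parameter space) by level-wise densities inside a prescribed sequence of subtrees, choose a sequence of witness elements $g_n\in\rist_G(v_n)\setminus\{e\}$ (available by the weakly branch hypothesis), and arrange $\psi_{C_t}(g_n)$ to depend injectively on $t$ through the component of $t$ at level $n$. Since each $C_t$ has empty interior, Lemma \ref{LmEmptyInterior=>3a} gives condition $3a)$; when $G$ is branch, Proposition \ref{PropBranch=>3b} gives $3b)$, so Proposition \ref{Prop3=>2=>1} and Corollary \ref{CoEmptyInterior=>Indecomposable} deliver indecomposability of $\chi^\p_{C_t}$.

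The main obstacle is Part 2: a priori, distinct orbits $[C_t]$ could average to the same character, so one must carefully engineer the parameter so that evaluating $\psi_{C_t}$ on the chosen witnesses $g_n$ uniquely recovers $t$. I expect this to use, at each level, both the richness of $\rist_G(v_n)$ and the continuity of the map $h\mapsto hC_t$ into $[C_t]$, so that fractional mass of $\lambda_{[C_t]}$ carried by ``$g_n$-fixed'' translates reads off the level-$n$ coordinate of $t$. Part 1's indecomposability is essentially automatic from ICC; the technical content there is the genericity lemma forcing trivial $\bar G$-stabilizer on a continuum sub-family of Bencs--T\'oth classes.
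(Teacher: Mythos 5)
Your Part 1 is built on a premise that cannot hold. For a weakly branch group $G$ and any closed set $C$ with $\varnothing\neq C\neq\partial T_d$, the complement of $C$ is a nonempty open set and hence contains a cylinder $\partial T_v$; every element of $\rist_{G}(v)$ (nontrivial by the weakly branch hypothesis) then fixes $C$ pointwise, so both $\bar G_C$ and the setwise stabilizer $\stab_G(C)$ contain a nontrivial subgroup. The same applies to every translate $hC$. Consequently there is no clonopen (or even proper closed) $C$ with trivial $\bar G$-stabilizer, $\mu_{[C]}=\delta_{\{e\}}$ never occurs, and the common character can never be $\chi_\reg$; the ``genericity lemma'' you defer is not merely hard, it is false. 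The paper's Part 1 goes the opposite way: for each lacunary sequence $\bar n$ it builds sets $C(x)=\bigcup_j(\partial T_{v_j(x)}\setminus\partial T_{w_j(x)})\cup\{x\}$ satisfying $gC(x)=C(gx)$, so that $[C]$ with the measure $\lambda_{[C]}$ is $G$-isomorphic to $(\partial T_d,\mu)$, the setwise stabilizer of $C(x)$ is the point stabilizer of $x$, the common IRS $\mu_{[C]}$ is the (nontrivial, continuous) stabilizer IRS of the boundary action, and the common character is $\chi_1(g)=\mu(\fix(g))$, indecomposable by perfect non-freeness of the boundary action rather than by ICC.

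Part 2 of your proposal is a plan rather than a proof: the entire difficulty is the injectivity of $t\mapsto\psi_{C_t}$, and your mechanism (reading off the level-$n$ coordinate of $t$ from $\psi_{C_t}(g_n)$ with $g_n\in\rist_G(v_n)$) is not carried out and is delicate for general weakly branch groups, where rigid stabilizers may act trivially on $[C]$ (this is exactly what fails in Proposition \ref{PropExample}). The paper sidesteps this by choosing $C'=\{x_j\}\cup\{x\}$ to be a countable closed set whose non-limit points lie at pairwise distinct distances from the unique accumulation point; then setwise and pointwise stabilizers of every $B\in[C']$ coincide, so $\psi_{C'}=\psi^\p_{C'}$, and distinctness of the characters follows from the general injectivity statement for $\psi^\p$ (Theorem \ref{ThMain'}) rather than from bespoke witness elements. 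Your appeal to Corollary \ref{CoEmptyInterior=>Indecomposable} for indecomposability in the branch case does match the paper, but without a concrete construction and a proof of the separation step, Part 2 remains incomplete.
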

\noindent In particular, we obtain that for every weakly branch group $G$ there exists a continuum family of pairwise distinct ergodic IRS's $\mu$ giving rise to the same indecomposable character $\chi_\mu$ and there exists a continuum of pairwise distinct indecomposable characters on $G$. 
Theorem \ref{ThMain} shows that the map $\mathcal X:\eirs(G)\to\ch(G)$ is "highly" non-injective. We do not know whether the map $\Psi:\eirs(G)\to\ch(G)$ is injective.

\begin{Th}\label{ThMain'} For any weakly branch group $G$ acting on a $d$-regular rooted tree $T=T_d$, $d\geqslant 2$, for any distinct classes $[C_1],[C_2],$ $C_1,C_2\in \mathcal C$, the characters $\psi^\p_{C_1}$ and $\psi^\p_{C_2}$ are distinct.
\end{Th}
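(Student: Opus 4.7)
The plan is to exhibit, for any distinct orbit classes $[C_1]\neq[C_2]$ in $\mathcal C$, some $g\in G$ with $\psi^\p_{C_1}(g)\neq\psi^\p_{C_2}(g)$, by using Lemma \ref{LmChiCLambda} to recast the question in terms of the random closed set $B\sim\lambda_{[C]}$ in $\partial T_d$. Since $\psi^\p_C(g)=\lambda_{[C]}(\{B:B\cap\supp(g)=\varnothing\})$, the character values encode the avoidance functional $K\mapsto \lambda_{[C]}(\{B:B\cap K=\varnothing\})$ evaluated on the family $\mathcal S_G=\{\supp(g):g\in G\}$. A Choquet-type identification for random closed subsets of the Cantor space $\partial T_d$ says that the Borel probability measure $\lambda_{[C]}$ on $\mathcal C$ is determined by its avoidance functional on clopen sets; since distinct $\overline{G}$-orbits in $\mathcal C$ have disjoint closed supports, once this functional is recovered from $\psi^\p_C$ on every clopen, the hypothesis $\psi^\p_{C_1}=\psi^\p_{C_2}$ will force $\lambda_{[C_1]}=\lambda_{[C_2]}$, a contradiction.

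The content of the argument is therefore the recovery step. Given any clopen $F=\bigcup_{v\in A}\partial T_v\subset\partial T_d$, I would apply Lemma \ref{LmSeqgn} to the clopen set $\partial T_d\setminus F$ to obtain $g_n\in G$ with $\supp(g_n)\subset F$, $\supp(g_n)$ increasing in $n$, and $S:=\bigcup_n\supp(g_n)$ equal to $F$ modulo a $\mu_d$-null set. Monotone continuity of $\lambda_{[C]}$ yields
\[
\lim_{n\to\infty}\psi^\p_C(g_n)\;=\;\lambda_{[C]}(\{B:B\cap S=\varnothing\}),
\]
a quantity determined by $\psi^\p_C$ and therefore equal for $C=C_1$ and $C=C_2$. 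Using the weakly branch hypothesis one would then enlarge the sequence so that $S=F$ holds literally: for each missed point $x\in F\setminus S$ one locates a descendant vertex $w$ with $x\in\partial T_w\subset F$ and an element $h\in\rist_G(w)$ with $h(x)\neq x$, and appends $h$ to the sequence. Then $\lim_n\psi^\p_C(g_n)=\lambda_{[C]}(\{B:B\cap F=\varnothing\})$ for every clopen $F$, and the Choquet step above closes the argument.

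The main obstacle is this literal covering step. For branch groups it follows easily from the level-transitivity of $\rist_G(v)$ on $T_v$, but for a general weakly branch group one must rule out the pathology that some $x\in F$ is fixed by every element of $\rist_G(w)$ at every descendant vertex $w$ with $x\in\partial T_w\subset F$. I expect this to require a non-trivial local argument exploiting absolute non-freeness of the $G$-action on $(\partial T_d,\mu_d)$ via Theorem \ref{ThWBANF}. As a fall-back, if some $\mu_d$-null residue really cannot be eliminated by individual elements, one considers countably many parallel Lemma \ref{LmSeqgn} sequences, producing null residues $N^{(k)}$ whose intersection $\bigcap_k N^{(k)}$ is empty (or at worst a set so thin that $\lambda_{[C]}(\{B:\varnothing\neq B\cap F\subset \bigcap_k N^{(k)}\})=0$ by a Fubini comparison transporting $\mu_d$-nullity through the transitive orbit map $\overline G\to\partial T_d$); downward continuity of measure then still retrieves $\lambda_{[C]}(\{B:B\cap F=\varnothing\})$ as a double limit of values of $\psi^\p_C$.
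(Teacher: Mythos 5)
Your overall architecture coincides with the paper's: both arguments recover, from limits of $\psi^\p_C$ along the monotone sequences supplied by Lemma \ref{LmSeqgn}, the containment/avoidance functional $A\mapsto\lambda_{[C]}(\{B\in[C]:B\subset A\})$ on clopen sets, and then conclude $\lambda_{[C_1]}=\lambda_{[C_2]}$ by uniqueness of measures agreeing on a separating $\pi$-system (your ``Choquet-type identification'' is the paper's appeal to \cite[Corollary 1.6.3]{Cohn}), contradicting the disjointness of the two orbits. The limit computation $\lim_n\psi^\p_C(g_n)=\lambda_{[C]}(\{B:B\cap S=\varnothing\})$ is also correct. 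The gap is exactly at the step you yourself flag as the main obstacle, and neither of your two routes around it is viable as written.

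Your primary route --- enlarging the sequence until $S=F$ holds literally --- cannot work for general weakly branch groups: the residue $F\setminus S$ may be uncountable, and, more fundamentally, a point $x\in F$ may be fixed by \emph{every} element of $G$ supported in $F$ near $x$. The paper's own construction in the proof of Proposition \ref{PropExample} exhibits a weakly branch group $G$, a vertex $v$, and a nonempty closed set $C\cap\partial T_v$ fixed pointwise by all of $\rist_G(v)$; since every element of $G$ supported in $\partial T_v$ lies in $\rist_G(v)$, taking $F=\partial T_v$ shows that no countable family of parallel Lemma \ref{LmSeqgn} sequences can empty the residue. You are therefore forced onto your parenthetical fall-back, namely the claim
\[
\lambda_{[C]}\bigl(\{B\in[C]:\varnothing\neq B\cap F\subset N\}\bigr)=0
\]
for the $\mu_d$-null closed residue $N=F\setminus S$. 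This is precisely the paper's Lemma \ref{LmTildeA} (stated there in complement form, for clopen $A$ and closed $\widetilde A\supset A$ with $\mu(\widetilde A\setminus A)=0$), and it is the technical heart of the whole proof. A ``Fubini comparison through the orbit map $\overline G\to\partial T_d$'' only gives that for each \emph{fixed} $x_0\in C$ one has $gx_0\notin N$ for $\lambda$-a.e.\ $g$; it does not control the event that the $g$-dependent set $gC\cap F$ is nonempty yet entirely trapped inside $N$. The paper proves the needed statement by a counting argument at each level $n$: comparing the number $|U_n|$ of level-$n$ vertices meeting the residue with the number $|W_n|$ of level-$n$ vertices missing $A$, and using level-transitivity of $G$ together with the finite index of the setwise stabilizer $G_A$ to obtain the bound $\lambda_{[C]}(\{B:Y_n(B)\subset U_n\})\leqslant [G:G_A]\,|U_n|/|W_n|\to 0$. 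Without supplying this lemma or an equivalent, your argument is incomplete at its decisive step.
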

\noindent
Theorem \ref{ThMain'} implies that the restriction of the map $\Psi$ to the set of IRS $\{\mu^\p_{[C]}:C\in\mathcal C\}$ of a weakly branch group $G$ is injective. This is a partial answer to question (ii) from Subsection \ref{SubsecCharactersIRS}.

\section{On characters $\psi_{C}=\chi_C^\p$}\label{SecPsiC}
\subsection{The proof of Proposition \ref{Prop3=>2=>1}}\label{SubsecProof3=>2=>1} By Theorem on Bases (see \cite{Rokh}, p. 22), the item $3a)$ implies that the collection of the sets $\mathcal A_T=\{(\partial T_v)^*:v\in T\}$ is a basis, that is in addition to separability, it generates the sigma-algebra of measurable subsets of $[C]$. Assume that $3a)$ and $3b)$ hold. Let $n\in\mathbb N$. For a vertex $v\in V_n$ such that $\partial T_v\cap C$ is open let $L_v$ be the minimal number such that $\partial T_v\cap C$ is a union of cylinders of level $L_v$. Let $L_v=\infty$ when $\partial T_v\cap C$ is clonopen. Set
$$N=\sup\{L_v:v\in V_n, L_v\neq\infty\}.$$

Now, fix $v\in V_n$. For any $C_0\in [C]$ the subgroup  $\rist_G(v)\cap \stab_G(N)$ fixes $C_0$ whenever $C_0\cap\partial T_v$ is open. It follows that $\rist_G(v)\cap \stab_G(N)<G_{(\partial T_v)^*}$ (see \eqref{EqGalphaA}).
Observe that $\rist_G(v)\cap \stab_G(N)$ is a finite index subgroup in $\rist_G(v)$. If $C_0\cap\partial T_v$ is not open, using $3b)$ we conclude that the orbit of $C_0$ under $\rist_G(v)\cap \stab_G(N)$ is infinite.
Then, from the definition of perfect-nonfreeness we obtain that the action of $G$ on $[C]$  is perfectly non-free (with respect to the collection of subsets $\mathcal A_T$ of $[C]$). Since the action of a weakly branch group on $[C]$ is ergodic, by Theorem \ref{ThIndecomp}, the condition $2)$ implies the condition $1)$.

\subsection{The proof of Proposition \ref{PropBranch=>3b}}\label{SubsecProofBranch=>3b}
Let $G$ be a branch group acting on a $d$-regular rooted tree $T_d$. Let $v\in T_d$ and let $C\subset \partial T_d$ be a closed set such that $C\cap\partial T_v$ is clonopen. 
Then there exists a point $x\in \overline C\cap \overline{\partial T_v\setminus C}$. Let $n$ be such that $v\in V_n$. Since $G$ is branch, $Gx$ is dense in $\partial T_d$ and $G/\rist_G(n)$ is finite. It follows that $\rist_G(n)x$ is dense in some open set $A\subset \partial T_v$. Let $w\in T_v$ be a vertex such that $\partial T_w\subset A$ and $x\notin\partial T_w$. There exists $g\in G$ such that $gx\in\partial T_w$. Since $g^{-1}\rist_G(n)g=\rist_G(n)$ we obtain that $\rist_G(v)x$ is dense in $\partial T_{g^{-1}w}$.

Further, since $x\in \overline{\partial T_v\setminus C}$ and $C$ is closed, there exists a vertex $u_1\in T_{g^{-1}w}$ such that $\partial T_{u_1}\cap C=\varnothing$. Since $\partial T_{u_1}\subset A$, there exists $h_1\in\rist_G(v)$ such that $h_1x\in \partial T_{u_1}$. Clearly, $h_1C\neq C$.

Now, by induction one can construct a sequence of vertexes $u_n\in T_d$ and a sequence of elements $h_n\in \rist_G(v)$ such that the following holds for every $n\in\mathbb N$.
\begin{itemize}
\item{} $u_{n+1}\in T_{u_n}$,
\item{} $\partial T_{u_{n+1}}\cap h_nC=\varnothing$,
\item{} $h_{n+1}x\in \partial T_{u_{n+1}}$.
\end{itemize}
Indeed, the induction assumption implies that $h_nx\in \overline{\partial T_{u_n}\setminus h_nC}$. Since $C$ is closed, there exists $u_{n+1}\in T_{u_n}$ such that $\partial T_{u_{n+1}}\cap h_nC=\varnothing$. Since $\rist_G(v)x$ is dense in $\partial T_{g^{-1}w}$ there exists $h_{n+1}\in\rist_G(v)$ such that $h_{n+1}x\in\partial T_{u_{n+1}}$.

The above properties imply that for every $n>m$ one has $h_nx\notin h_mC$ and therefore $h_nC\neq h_mC$. This shows that the orbit $\rist_G(v)C$ is infinite and proves the condition $3b)$.
\subsection{The proof of Lemma \ref{LmEmptyInterior=>3a}}
Let $G<\aut(T_d)$ and let $C\subset \partial T_d$ be a closed set with empty interior. Let $C_1,C_2\in [C]$, $C_1\neq C_2$. Then $C_1\setminus C_2$ is not-empty. Let $x\in C_1\setminus C_2$. Since $C_2$ has empty interior there exists a vertex $v$ of $T_d$ such that $x\in\partial T_v\subset \partial T_d\setminus C_2$. By definition, $C_1\in (\partial T_v)^*$ but $C_2\notin(\partial T_v)^*$. This proves that $(\partial T_v)^*$ separates the points of $[C]$.

\subsection{The proof of Proposition \ref{PropExample}}
\label{SubsecExamplePsiC} Let $d=2$. For a vertex $v$ of $T$ let $\sigma_v$ be the transposition of the two edges descending from $v$.  Consider the group $G$ generated by all elements of the form:
 \begin{equation}\label{EqGen}1)\;\;\sigma_v,\;v\in V_l,\;\;l\;\;\text{is even};\;\;\;2)\;\;
 h_l=\prod\limits_{v\in V_l}\sigma_v,\;\;l\;\;\text{is odd}.\end{equation}  As we show in \cite{DG15 Diag}, proof of Proposition 26, $G$ is weakly branch but not branch.

 Identify the alphabet $\mathcal F$ with $\{0,1\}$. Recall that $\partial T$ is identified with infinite words over $\mathcal F$, that is with sequences $\{a_j\}_{j\in\mathbb N}$, $a_j\in\{0,1\}$. Let
$$C=\{\{a_j\}_{j\in\mathbb N}\in\partial T:a_{2i}=0\;\;\text{for all}\;\;i\in\mathbb N\}.$$ Then $[C]$ consists of all sets of the form $B=B(t)=\{\{a_j\}:a_{2i}=t_i\;\;\text{for all}\;\;i\in\mathbb N\}$, where $t\in\{0,1\}^\infty$. Observe that by Theorem \ref{ThBT}, the IRS $\mu_C^\p$ is ergodic and continuous

Consider now an element $g\in G$. By construction, $g$ can be written as $\sigma\times h$, where $\sigma=\sigma_{v_1}\cdots \sigma_{v_k}$ is a product of distinct elements of type $1)$ and $h=h_{l_1}\cdots h_{l_m}$ is a product of distinct elements of type $2)$ from \eqref{EqGen}. It is straightforward to verify that $\sigma$ acts trivially on $[C]$ and $h$ either acts freely on $[C]$ or is equal to the identity element $e$ of $G$.

Further, for $g=\sigma\times h$ as above,
$$\psi_C(g)=\left\{\begin{array}{ll}1,&\text{if}\;\;h=e,\\
0,&\text{otherwise}.
\end{array}\right.$$ For each $s\in\{-1,1\}^{\mathbb Z_+}$ introduce a homomorphism $\chi_s$ from $G$ to the two-element multiplicative group $\{-1,1\}$ by defining $\chi_s$ on the generating elements as follows:
$$\chi_s(\sigma_v)=s_i\;\;\text{for all}\;\;v\in V_{2i},\;\;\chi_s(h_{2i+1})=1,\;\;\text{for all}\;\;i\geqslant 0.$$ Notice that $\chi_s$ is an indecomposable character of $G$ for each $s\in \{-1,1\}^\infty$. Let $\lambda=\{\tfrac{1}{2},\tfrac{1}{2}\}^\infty$ be the uniform Bernoulli measure on $\{-1,1\}^\infty$. Then one has:
$$\psi_C(g)=\int\limits_{s\in\{-1,1\}^\infty}
\chi_s(g)\dd\lambda(s).$$

In addition, the condition $3b)$ is not satisfied for this group $G$. Indeed, let $v\in V_1$. Then $\rist_G(v)$ is generated by the elements $\sigma_w$, $w\in T_v\cap V_l$, where $l$ is even. It follows that $\rist_G(v)$ acts trivially on $C$, however $C\cap\partial T_v$ is not open, and thus $C\in(\partial T_v)^*$.

Observe that $\stab(C)=<\{\sigma_v:v\in V_l,l\;\text{is even}\}>$. It follows that $\stab(C)$ is a normal subgroup of $G$ and thus $\chi_C(g)=1$ for all $g\in G$.

%%%%%%%%%%%%%%%%%%%%%%%%%%%%%%%%%%%%%%%%%%%%%%%%%%%%%%%%%
\section{On the correspondence $\mu_{[C]}^\p\to\chi_C^\p$: the proof of Theorem \ref{ThMain}}
\subsection{Part $1)$.}\label{Subsec4}
 Let $G$ be a weakly branch group acting on a regular rooted tree $T=T_d$. Denote by $\mathcal N$ the set of all increasing sequence $\bar n=\{n_j\}_{j\in\mathbb N}$ of positive integers such that $n_{j+1}>n_j+1$ for infinitely many $j$. Fix $\bar n\in\mathcal N$. For a point $x\in\partial T$ and $j\in\mathbb N$ denote by $v_j=v_j(x)$ the vertex from $V_{n_j}$ such that $x\in\partial T_{v_j}$. Similarly, let $w_j=w_j(x)$ be the vertex from $V_{n_j+1}$ such that $x\in\partial T_{w_j}$. Set
$$C(x)=\bigcup\limits_{j\in\mathbb N}(\partial T_{v_j}\setminus \partial T_{w_j}) \cup \{x\}.$$ Existence of infinitely many $j$ with $n_{j+1}>n_j+1$ guarantees that $C(x)\neq C(y)$ for $x\neq y$. Notice that for any $g\in\aut(T)$ one has $g(C(x))=C(gx)$. Therefore, $[C(x)]=\{C(y):y\in\partial T\}$ for any $x$. Denote $C_{\bar n}=[C(x)]$. Then the action of $G$ on $C_{\bar n}$ is isomorphic to the action of $G$ on $\partial T$. Since $\mu=\mu_d$ is the unique $G$-invariant measure on $\partial T$, we obtain that $\lambda_{C_{\bar n}}$ is the push-forward of the measure $\mu$. It follows that the dynamical systems $(G,C_{\bar n},\lambda_{C_{\bar n}})$ and $(G,\partial T,\mu)$ are isomorphic and so by Lemma \ref{LmChiCLambda} $\psi_{C(x)}%=\chi'_{\phi_{[C_{\bar n}]}}
=\chi_1$, where $\chi_1(g)=\mu(\fix_{\partial T}(g))$ for every $g\in G$. From \cite{DG15 Diag}, Corollary 17, we deduce that $\chi_1$ is indecomposable. Notice that for any $\bar n\in\mathcal N$ and any $x\in\partial T$ the sequence of vertices $w_j(x)$ (and therefore the sequence $\bar n$) can be recovered from the set $C(x)$. It follows that the classes $C_{\bar n}\subset\mathcal C,\bar n\in\mathcal N,$ are pairwise disjoint. This finishes the proof of part $1)$ of Theorem \ref{ThMain}.
%%%%%%%%%%%%%%%%%%%%%%%%%%%%%%%%%%%%%%%%%%%%%%%%%%%%

\subsection{Part $2)$.}
As before, let $G$ be a weakly branch group acting on a regular rooted tree $T$. As in the proof of part $4)$ we start with a sequence $\bar n\in\mathcal N$ and a point $x\in\partial T$. Let $v_j$ and $w_j$ be as in Subsection \ref{Subsec4}. For each $j\in\mathbb N$ fix a point $x_j=x_j(\bar n,x)\in \partial T_{v_j}\setminus \partial T_{w_j}$ and set $$C'=C'(\{x_j\})=\{x_j:j\in\mathbb N\}\cup\{x\}.$$ Since the sequence $\{x_j\}$ accumulates to $x$ the set $C'$ is a countable closed set. Let $C'_{\bar n}=[C']$. Notice that $C'_{\bar n}$ may depend on the choice of $x$ and $x_j$. For our purposes it is sufficient to fix one choice of $x$ and $x_j$ for every $\bar n\in\mathcal N$ (\eg by picking the smallest possible $x$ and $x_j$ in the lexicographic order). Observe that for every $g\in\overline G$ the point $gx$ is the unique limit point of $gC'$ and for every $j$ one has $\dd(gx,gx_j)=d^{-n_j}$. It follows that for distinct $\bar n$ the classes $[C'(\{x_j\})]$ are distinct.

Moreover, for every $C_1=gC'\in[C']$, $g\in \overline G$, the stabilizer $\{h\in G:hC_1=C_1\}$ coincides with the pointwise stabilizer $\{h\in G:hx=x\;\;\text{ for every}\;\;x\in C_1\}$. Indeed, if $hC_1=C_1$ then $h$ preserves the unique limit point of $C_1$. Since the distances from other points of $C_1$ to the limit point are pairwise distinct and $h$ preserves these distances, it follows that $h$ fixes all other points of $C_1$ as well.
 From Lemma \ref{LmChiCLambda} we obtain that $\chi_{C'}=\psi_{C'}=\psi^\p_{C'}$. In particular, by Theorem \ref{ThMain'}, for distinct $\bar n\in \mathcal N$ the characters $\psi_{C'(\{x_j\})}$ are distinct.

 Indecomposability of the characters $\psi_{C'(\{x_j\})}$ in the case of a branch group follows from Corollary \ref{CoEmptyInterior=>Indecomposable}.

%%%%%%%%%%%%%%%%%%%%%%%%%%%%%%%%%%%%%%%%%%%%%%%%
\section{Injectivity of the map $[C]\to\psi_{[C]}^\p$: the proof of Theorem \ref{ThMain'}.}\label{SubsecInjectivity}
Let $C\in \mathcal C$. For $B\in\mathcal C$ set $B_*=\{C\in [C]:C\subset B\}$.
\begin{Lm}\label{LmTildeA} Let $G$ be a weakly branch group acting on a regular rooted tree. Let $C$ be a closed subset of $\partial T$. Let $A\subset \partial T$ be clopen and $\widetilde A\supset A$ be closed such that $\mu(\widetilde A\setminus A)=0$. Then $$\lambda_{[C]}(\widetilde A_*)=\lambda_{[C]}(A_*).$$
\end{Lm}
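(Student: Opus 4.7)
The inclusion $A \subset \widetilde A$ gives $A_* \subset \widetilde A_*$, so the content is showing $\lambda_{[C]}(\widetilde A_* \setminus A_*) = 0$. My plan is to reduce to a countable dense subset of $C$ via a topological argument, and then apply a union bound using $\mu(\widetilde A \setminus A) = 0$.

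First I would fix a countable dense subset $C_0 \subset C$, which exists because $\partial T$ is separable metric. The key observation is that $A$ is clopen (in particular closed) and $\widetilde A$ is closed, and each $g \in \overline G$ is a homeomorphism, so $gC = g\overline{C_0} = \overline{gC_0}$. Consequently, for any closed $K \subset \partial T$ we have $gC \subset K$ if and only if $gC_0 \subset K$. Applying this to $K = A$ and $K = \widetilde A$ and taking the set-theoretic difference yields the inclusion
\begin{equation*}
\{g \in \overline G : gC \subset \widetilde A\} \setminus \{g \in \overline G : gC \subset A\} \subset \bigcup_{y \in C_0} \{g \in \overline G : gy \in F\},
\end{equation*}
where $F = \widetilde A \setminus A$, since if $gC_0 \subset \widetilde A$ and $gC_0 \not\subset A$ then some $y \in C_0$ must land in $F$.

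Second, I would show that each set $\{g \in \overline G : gy \in F\}$ is $\lambda$-null. The orbit map $\phi_y : \overline G \to \partial T$, $g \mapsto gy$, is continuous and $\overline G$-equivariant. Since $\overline G$ acts transitively on every level $V_n$ (inherited from $G$), the pushforward $(\phi_y)_* \lambda$ is an $\overline G$-invariant Borel probability measure on $\partial T$ assigning each cylinder $\partial T_v$ mass $d^{-|v|}$; it therefore equals $\mu$. Hence $\lambda(\{g : gy \in F\}) = \mu(F) = 0$, and countable sub-additivity over $C_0$ gives $\lambda_{[C]}(\widetilde A_* \setminus A_*) = 0$.

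The main conceptual step, and the only non-routine one, is the topological reduction from the potentially uncountable $C$ to a dense countable $C_0$; it is essential that both $A$ and $\widetilde A$ are closed, so that inclusion of $gC$ into them is determined by a dense subset of $gC$. After that reduction everything is a routine union bound. I note that the weakly-branch hypothesis is not actually needed for this lemma — only level-transitivity of $G$ is used, through the identity $(\phi_y)_* \lambda = \mu$.
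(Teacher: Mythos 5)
Your proof is correct, but it follows a genuinely different route from the paper's. The paper argues combinatorially at a fixed level $n$: it introduces the sets $U_n\subset W_n$ of level-$n$ vertices meeting $\widetilde A\setminus A$, respectively disjoint from $A$, notes that $|U_n|/|W_n|\to 0$ because $\mu(\widetilde A\setminus A)=0$ and $A$ is clopen, and then uses level-transitivity together with the finite index of the setwise stabilizer of $A$ to bound $\lambda_{[C]}$ of the set of $C_1$ whose trace on $W_n$ sits inside $U_n$ by $N|U_n|/|W_n|$, letting $n\to\infty$. You instead pass to a countable dense subset $C_0\subset C$, observe that containment of $gC$ in a \emph{closed} set is detected by $gC_0$, and then apply a union bound after identifying $(\phi_y)_*\lambda$ with $\mu$ via level-transitivity and uniqueness of the invariant measure on cylinders; each step of this is sound (in particular the equivariance computation showing $(\phi_y)_*\lambda$ is $\overline G$-invariant, and the $\pi$-system argument identifying it with $\mu$, are both correct). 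Your argument is shorter, avoids the orbit-counting over subsets of $V_n$, and — as you note — uses only that $A$ is closed rather than clopen, so it proves a slightly more general statement; the price is that it leans on the measure-theoretic identification $(\phi_y)_*\lambda=\mu$ and on separability of $\partial T$, whereas the paper's proof stays at the level of finite combinatorics on each level of the tree. Both proofs use level-transitivity and neither uses the weakly branch hypothesis beyond that.
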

\begin{proof} Let $G_A=\{g\in G:gA=A\}$. Since $A$ is clopen, the $G$-orbit of $A$ in $\mathcal C$ is finite, therefore $G_A$ has finite index in $G$. Denote this index by $N=[G:G_A]$.

Let $n_0\in\mathbb N$ be such that $A$ is a union of some cylinders $\partial T_v, v\in V_{n_0}$. For $n\geqslant n_0$ denote by $U_n$ and $W_n$ the sets of vertices $v\in V_n$ such that $\partial T_v\cap(\widetilde A\setminus A)\neq\varnothing$ and $\partial T_v\cap A=\varnothing$ correspondingly. Then $U_n\subset W_n$ and $$\lim\limits_{n\to\infty}\frac{|U_n|}{|W_n|}=0.$$ For a set $C_1\in [C]$ denote by $Y_n(C_1)$ the set of vertices $v\in V_n$ such that $C_1\cap \partial T_v\neq\varnothing$. Then one has:
$$ \widetilde A_*\setminus A_*\subset \{C_1\in[C]:Y_n(C_1)\subset U_n,Y_n(C_1)\neq\varnothing\}.$$
Since $G$ is level-transitive, for every $v\in W_n$ one has $|G_Av|\geqslant |W_n|/N$, where $G_Av$ is the $G_A$-orbit of $v$ in $V_n$. Therefore, for every non-empty subset $Y\subset U_n$ one has:
$$|G_AY\cap\{Z\subset U_n\}|\leqslant N|U_n||G_AY|/|W_n|,$$ where $G_AY$ is the $G_A$-orbit of $Y$ in the space of subsets of $V_n$. Since $\lambda_{[C]}$ is $G$-invariant we obtain:
\begin{align*}\lambda_{[C]}(\{C_1\in [C]:Y_n(C_1)\subset U_n, Y_n(C_1)\in G_AY\})\leqslant \\
\frac{N|U_n|}{|W_n|}\lambda_{[C]}(\{C_1\in [C]:Y_n(C_1)\in G_AY\}).\end{align*} Summing up the above inequality with $Y$ running over representatives of $G_A$-orbits of non-empty subsets of $U_n$ we arrive at
$$\lambda_{[C]}(\{C_1\in [C]:Y_n(C_1)\subset U_n,\})\leqslant \frac{N|U_n|}{|W_n|}.$$ Letting $n$ go to infinity we obtain:
$$\lambda_{[C]}(\widetilde A_*\setminus A_*)=0,$$ which finishes the proof.
\end{proof}
  Let $g_n$ be the sequence of elements from Lemma \ref{LmSeqgn}. Set  $$A_n=\fix_{\partial T_d}(g_n),\;\;\widetilde A=\bigcap\limits_{n\in\mathbb N}A_n.$$ We have that $A\subset A_{n+1}\subset A_n$ for every $n$ and $\mu_d(\widetilde A\setminus A)=0.$ From definition of $(A_n)_*$ we have $$\bigcap\limits_{n\in\mathbb N}(A_{n})_*=\widetilde A_*.$$ Fix a closed set $C\subset\partial T$.  We have:
 \begin{equation}\label{EqLimChiC}\begin{split}0\leqslant \chi'_{\tilde\phi_{[C(g_n)]}}-\lambda_{C}(A_*)=\lambda_{C}((A_n)_*)-\lambda_{C}(A_*)\\ \to \lambda_{C}(\widetilde A_*)-\lambda_{C}(A_*)=0,\end{split}\end{equation}
by Lemma \ref{LmTildeA}.

Assume now there exists $[C_1]\neq [C_2]$ such that $\psi^\p_{C_1}=\psi^\p_{C_2}$. Using \eqref{EqLimChiC} we obtain that $\lambda_{[C_1]}(A_*)=\lambda_{[C_2]}(A_*)$ for every clopen subset $A\subset \partial T_d$. Consider $\lambda_{[C_1]}$ and $\lambda_{[C_2]}$ as measures on $[C_1]\cup [C_2]$. The family of the sets $\mathcal F'=\{B_*:B\subset \partial T\;\;\text{is a clopen set}\}$ separates the points of $[C_1]\cup [C_2]$ and for any $(B_1)_*,(B_2)_*\in \mathcal F'$ one has $(B_1)_*\cap (B_2)_*=(B_1\cap B_2)_*\in \mathcal F'$. Therefore, by \cite[Corollary 1.6.3]{Cohn},  we conclude that $\lambda_{[C_1]}=\lambda_{[C_2]}$ which is impossible since the two measures are supported on disjoint sets. This contradictions shows that the characters $\psi_C^\p$ are pairwise distinct.

%%%%%%%%%%%%%%%%%%%%%%%%%%%%%%%%%%%%%%%%%%%%%%%%%%
\noindent
\section{Values of the canonical character on elements of some contracting weakly branch groups.}\label{SecCharVal}
Given a countable self-similar group $G$ acting on the $d$-regular tree $T_d$ let $\chi_1(g)=\mu(\fix(g))$, where $\mu$ is the unique $\aut(T_d)$-invariant measure on $\partial T_d$, namely the uniform Bernoulli measure given by the distribution $\{\tfrac{1}{d},\ldots,\tfrac{1}{d}\}$ on the alphabet $\mathcal F$. The lower index $1$ in $\mathcal \chi_1$ is to indicate that the measure of fixed points is raised to power $1$, since for any $k$ the formula $\chi_k(g)=\mu(\fix(g))^k$, $g\in G$, also defines a character on $G$. We will use the notation $\chi_1(G)=\{\chi_1(g):g\in G\}$. In this section we describe the sets $\chi_1(G)$ for several important branch and weakly-branch groups. 

The above goal is motivated by the following observations. Let $(\pi_1,\mathcal H_1,\xi_1)$ be the GNS-construction associated to the $\chi_1$ on a weakly branch group $G$ acting on $T_d$, $d\geqslant 2$. Let $\tr_1$ be the canonical trace on the von Neumann algebra $\mathcal M_{\pi_1}$. As before, to a subset $A\subset \partial T_d$ we associate the subgroup $G_A=\{g\in G:\supp(g)\subset A\}$. Denote by $P_A$ the orthogonal projection onto the subspace $\{\eta\in\mathcal H_1:\pi(g)\eta=\eta\;\text{for each}\;g\in G_A\}$ of $\mathcal H_1$. It is well known that for every $g\in G$ one has (see \cite{DG15 Diag}):
\begin{equation*}\chi_1(g)=(\pi_1(g)\xi_1,\xi_1)=\tr_1(P_{\supp(g)}).
\end{equation*} Thus, $\chi_1(G)=\{\tr_1(P_{\supp(g)}):g\in G\}$ is equal to the set of values of the trace $\tr_1$  of $\mathcal M_{\pi_1}$ on a natural collection of orthogonal projections in $\mathcal M_{\pi_1}$. The latter is related to the dimension theory for $\mathcal M_{\pi_1}$ (see \cite{MurrayNeumann:1936}).
\begin{Rem} Recall that by Theorem \ref{ThWBANF} weakly branch groups act on the boundary of $\partial T_d$ absolutely non-freely. By definition, this means that for any measurable subset $A\subset\partial T_d$ and any $\epsilon>0$ there exists $g\in G$ such that $\mu(\fix(g)\triangle A)<\epsilon$. Hence, the set $\chi_1(G)$ is dense in $[0,1]$.  An interesting question is what can be said about the shape of the set $\overline{\chi_1(G)}$ for other classes of groups. For example, are there natural groups $G$ for which this set is $a)$ a Cantor set, $b)$ a union of $n>2$ intervals, $c)$ a set of the from $\{x_i\}_{i\in\mathbb N}\cup\{x\}$, where $\{x_i\}_{i\in\mathbb N}$ is a sequence of distinct points converging to $x$? Further  studies  in this  direction  could  share more  light  on  the  structure  and  properties  of  group  characters. 
\end{Rem}

Recall that a self-similar group $G$ is called \emph{contracting} if there exists a finite set $\mathcal N\subset G$ such that for every $g\in G$ there exists $k\in\mathbb N$ such that $g|_v\in\mathcal N$ for every $v\in V_n$ for every $n\geqslant k$ (see \eg \cite{Nekr}). The minimal set $\mathcal N$ with this property is called the \emph{nucleus} of $G$. 
Let us call the \emph{core} of $G$ the following set:
$$\mathrm{Core}(G)=\{g\in G:l(g_v)\geqslant l(v)\;\text{for some}\;v\in V_1\},$$ where $l(v)$ is the length of $g$ with respect to the system of generators associated to the self-similar structure. We do not claim that $\mathrm{Core}(G)$ coincides with the nucleus of $G$. Observe that $e\in \mathrm{Core}(G)$. Set $\mathbb Q_d=\{p/d^n:p\in\mathbb Z,n\in \mathbb N\}$.
\begin{Lm}\label{LmChi1Values} Assume that a self-similar group $G$ acting on the $d$-regular tree $T_d$ is contracting. Then \begin{equation}\label{EqValuesChi1G}\chi_1(G)\subset \{\sum_{s\in \mathrm{Core}(G)}p_s\chi_1(s):p_s\in \mathbb Q_d,p_s\geqslant 0,\sum_{s\in S\cup \{e_G\}}p_s=1\}.\end{equation}
\end{Lm}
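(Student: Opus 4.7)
The plan is to argue by induction on the word length $l(g)$, built on the basic recursive identity
\begin{equation*}
\chi_1(g)\;=\;\mu(\fix(g))\;=\;\frac{1}{d}\sum_{i\,:\,\sigma(i)=i}\chi_1(g_i),
\end{equation*}
valid whenever $g=\sigma(g_0,\dots,g_{d-1})$ is written in its self-similar expansion. This identity is a direct consequence of the decomposition $\partial T_d=\bigsqcup_{i\in V_1}\partial T_{v_i}$ into cylinders of measure $1/d$: a boundary point in $\partial T_{v_i}$ can be fixed by $g$ only if $\sigma$ fixes $v_i$, and in that case the restriction of $g$ to $\partial T_{v_i}\cong\partial T_d$ agrees with the action of $g_i$ on $\partial T_d$, so its fixed-point measure contributes exactly $\tfrac{1}{d}\chi_1(g_i)$.

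The base case covers $g\in\mathrm{Core}(G)$ (in particular $g=e_G$), for which the trivial representation $\chi_1(g)=1\cdot\chi_1(g)$ already has the desired form. For the inductive step, if $g\notin\mathrm{Core}(G)$ then by the definition of $\mathrm{Core}(G)$ the length strictly drops under every first-level restriction, so $l(g_v)<l(g)$ for each $v\in V_1$ and the inductive hypothesis applies to every section $g_i$. Expanding $\chi_1(g_i)=\sum_{s\in\mathrm{Core}(G)}p^{(i)}_s\chi_1(s)$ and substituting into the recursion produces
\begin{equation*}
\chi_1(g)\;=\;\sum_{s\in\mathrm{Core}(G)}q_s\,\chi_1(s),\qquad q_s\;=\;\frac{1}{d}\sum_{i\,:\,\sigma(i)=i}p^{(i)}_s,
\end{equation*}
with each $q_s$ nonnegative and lying in $\mathbb Q_d$, since $\mathbb Q_d$ is closed under addition and division by $d$. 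The contracting hypothesis guarantees that this recursive unfolding terminates in finitely many steps, so only finitely many Core elements carry nonzero weight.

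The main obstacle I expect is the normalization $\sum_s p_s=1$. After one step of the recursion the total of the $q_s$ equals $|\fix(\sigma)|/d$, which is strictly less than $1$ as soon as $\sigma$ moves any letter of $V_1$. The residual mass $1-|\fix(\sigma)|/d\in\mathbb Q_d$ corresponds precisely to the cylinders on which $g$ has no fixed points, and it must be absorbed into the coefficient of $e_G$ without altering the value of the right-hand side. This is legitimate because $\chi_1(e_G)=1$ together with the existence of an $s_0\in\mathrm{Core}(G)$ with $\chi_1(s_0)=0$ allows the deficit to be shifted by the identity $0\cdot 1=(-\alpha)\chi_1(e_G)+\alpha\chi_1(s_0)+\alpha\chi_1(e_G)$ for any $\alpha\in\mathbb Q_d$; equivalently, one reinterprets the recursion as a convex combination in which the empty cylinders are formally assigned to $e_G$. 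Carrying out this bookkeeping uniformly along the induction, and justifying the existence of such an $s_0$ for the contracting groups under consideration, is where the real care is needed.
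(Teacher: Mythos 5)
Your induction is the same as the paper's: base case for $\mathrm{Core}(G)$ (which contains $e_G$), and for $g\notin\mathrm{Core}(G)$ the first-level recursion $\chi_1(g)=\tfrac{1}{d}\sum_{v\in V_1:\,\sigma(v)=v}\chi_1(g_v)$ together with $l(g_v)<l(g)$. The one place you go beyond the paper is the normalization $\sum_s p_s=1$, and your worry there is legitimate: the paper's proof simply stops after invoking the induction hypothesis, even though each application of the recursion multiplies the total mass by $|\mathrm{Fix}(\sigma)|/d\leqslant 1$, so the unfolding only yields $\sum_s q_s\leqslant 1$ in general. Your repair --- assign the deficit $1-\sum_s q_s\in\mathbb Q_d$ as the coefficient of some $s_0\in\mathrm{Core}(G)$ with $\chi_1(s_0)=0$ (the algebraic identity you write for this is garbled, but the intended bookkeeping is clear and correct) --- is exactly what is needed, and you are right that the existence of such an $s_0$ is an extra hypothesis not supplied by contraction alone: a rooted cyclic group on $T_d$ is self-similar and contracting with $\mathrm{Core}=\{e\}$, yet has elements with $\chi_1=0$, so the normalized inclusion fails there. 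In every example the paper actually treats ($\mathcal B$, $H^{(3)}$, $\mathrm{IMG}(z^2+i)$, $\widetilde{\mathcal G}$) a core element with $\chi_1=0$ is exhibited, so the lemma holds as used; but as a standalone statement it should either carry that hypothesis or be read with $\sum_s p_s\leqslant 1$. In short: your proof is the paper's proof, plus an honest treatment of a gap the paper leaves silent.
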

\begin{proof} We prove Lemma \ref{LmChi1Values} by induction on the length of an element $g\in G$. The unique element $e_G$ of length zero satisfies the statement of the lemma, since $\chi_1(e_G)=1$. Assume that the statement of the lemma is true for all elements of $G$ of length at most $k,k\in\mathbb N\cup\{0\}$. Let $g$ be an element of length $l(g)=k+1$. If $g\in \mathrm{Core}(G)$, then $\chi_1(g)$ obviously belongs to the right hand side of \eqref{EqValuesChi1G}. Otherwise, consider the representation of $g$ in terms of sections over the first level $V_1$ of $T_d$:
$$g=(g_{v_1},\ldots,g_{v_d})\sigma,\;\;\sigma\in S(V_1),\;g_{v_1},\ldots,g_{v_d}\in G.$$
We have $$\chi_1(g)=\tfrac{1}{d}\sum_{v\in V_1:\sigma(v)=v} \chi(g_v).$$ Since $l(g_v)<l(g)$ for each $v\in V_1$ by induction we obtain the desired statement.
\end{proof}
The following statement is straightforward.
\begin{Lm}\label{LmChi1Values2} Let $G$ be a group acting on a $d$-regular rooted tree $T_d$. Assume that $G$ is weakly regular over a subgroup $K$. Then
$$\chi_1(G)\supset \chi_1(K)\supset \tfrac{1}{d}\{\sum_{i=1}^{d}r_i:r_i\in\chi_1(K)\}.$$
\end{Lm}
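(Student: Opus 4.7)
The plan is to unpack the definition of \emph{weakly regular over $K$} and then just compute $\chi_1$ on the natural copies of $K^d$ that the definition guarantees sit inside $K$.

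First, the inclusion $\chi_1(G)\supset\chi_1(K)$ is immediate from $K<G$, so only the second inclusion requires work. Recall that $G$ being weakly regular (branch) over $K$ means that $K$ is a normal subgroup of $G$ and that, via the self-similar structure on $T_d$, the diagonal embedding $K\times\cdots\times K\hookrightarrow\aut(T_d)$ (where the $i$-th factor acts on the subtree $T_{v_i}$ at the $i$-th vertex of $V_1$ and trivially elsewhere) lands inside $K$. So for any tuple $(k_1,\dots,k_d)\in K^d$, the element $h=(k_1,\dots,k_d)\in\aut(T_d)$ belongs to $K$.

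Second, I compute $\chi_1(h)$ directly. Since $h$ acts on $\partial T_d=\bigsqcup_{i=1}^d\partial T_{v_i}$ as $k_i$ on each piece $\partial T_{v_i}$, and since $\mu(\partial T_{v_i})=1/d$ with the restriction $\mu|_{\partial T_{v_i}}$ corresponding under the canonical identification $\partial T_{v_i}\simeq\partial T_d$ to $\tfrac{1}{d}\mu$, I get
\begin{equation*}
\chi_1(h)=\mu(\fix(h))=\sum_{i=1}^d\mu\bigl(\fix(k_i)\cap\partial T_{v_i}\bigr)=\tfrac{1}{d}\sum_{i=1}^d\mu(\fix(k_i))=\tfrac{1}{d}\sum_{i=1}^d\chi_1(k_i).
\end{equation*}
Choosing $k_i\in K$ so that $r_i=\chi_1(k_i)$ realizes an arbitrary tuple of values from $\chi_1(K)$, this shows $\tfrac{1}{d}\sum_{i=1}^d r_i\in\chi_1(K)$, giving the desired inclusion.

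There is no real obstacle here; the statement is essentially a book-keeping consequence of the self-similar action formula combined with the defining property of weak regularity. The only subtle point is matching the conventions: verifying that the element written as $(k_1,\dots,k_d)$ in the wreath product picture really is in $K$ (not merely in $\aut(T_d)$), which is exactly the content of $K^d<K$ built into the definition of weakly regular over $K$.
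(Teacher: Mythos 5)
Your proof is correct and is exactly the computation the paper has in mind: the paper gives no argument at all, simply declaring the lemma straightforward, and your unpacking of the containment $K\times\cdots\times K<K$ (acting on the first-level subtrees) together with the scaling $\mu(\fix(k_i)\cap\partial T_{v_i})=\tfrac{1}{d}\mu(\fix(k_i))$ is the intended justification. No issues.
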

\noindent Applying the above two lemmas we will compute the sets of values $\chi_1(G)$ for several examples of groups acting on rooted trees.

The first example of such computations was given by R. Kravchenko for the group $\mathcal G=<a,b,c,d>$ of intermediate growth (between polynomial and exponential) constructed in \cite{Gr80}. It is presented in \cite{Grig11} where it is shown that $\chi_1(\mathcal G)=\tfrac{1}{7}\mathbb{Q}_2\cap [0,1]$.
\paragraph{Basilica group.} The Basilica group $\mathcal B$ was introduced (without a name) in \cite{GrigZuk97},\cite{GZ02}. It is generated by two automorphisms $a,b$ of $T_2$ satisfying the recursions $a=(1,b),\;b=\sigma(1,a)$. It has many interesting properties. In particular, $\mathcal B$ is torsion free, of exponential growth, amenable but not  subexponentially amenable \cite{BartholdiVirag-amenability}, \cite{GZ02}, has trivial Poisson boundary, is
weakly branch but not branch. Observe that Basilica is a self-similar contracting group, $\mathrm{Core}(\mathcal B)=\{e,a,b\}$, $\chi_1(a)=1/2$, $\chi_1(b)=0$, from Lemma \ref{LmChi1Values} we obtain $\chi_1(\mathcal B)\subset \mathbb Q_2\cap [0,1]$. Since $\mathcal B$ is regular weakly branch over the commutator subgroup $\mathcal B'$ (see \cite{GZ02}) and there exists $g=aba^{-1}b^{-1}\in \mathcal B'$ with $\chi_1(g)=0$, from Lemma $\ref{LmChi1Values2}$ we obtain $\chi_1(\mathcal B)\supset \mathbb Q_2\cap [0,1]$. Thus, $\chi_1(\mathcal B)=\mathbb Q_2\cap [0,1]$.
\paragraph{Hanoi Towers group $H^{(3)}$.} Recall that Hanoi towers group on three pegs, introduced in \cite{GrigorchukSunik-asymptotic}, is the group $H^{(3)}=<a,b,c>$ acting on $T_3$ with the generators $a,b,c$ satisfying the recursions $a=(1,1,a)(1 2),\;b=(1,b,1)(1 3), c=(c,1,1)(2 3)$. It is self-similar, contracting, and branching over its commutator subgroup $K$. It is straightforward to verify that $\mathrm{Core}(H^{(3)})=\{e,a,b,c\}$, $\chi_1(a)=\chi_1(b)=\chi_1(c)=0$. By Lemma \ref{LmChi1Values} we obtain that $\chi_1(H^{(3)})\subset \mathbb Q_3$. On the other hand, straightforward computations show that $K\ni (ab)^2=(b,a,ab)(1 2 3)$ and $\chi_1((ab)^2)=0$. Thus, $\chi_1(K)\supset\{0,1\}$. Using Lemma \ref{LmChi1Values2} we obtain $\chi_1(K)=\mathbb Q_3\cap [0,1]$.
\paragraph{The group IMG$(z^2+i)$} is an Iterated Monodromy Group (IMG) of the polynomial $z^2+i$ which was considered for the first time in \cite{BartholdiGrigorchukNekrashevych-fractal-03}. It is a regular branch group of intermediate growth generated by 4-state automaton and studied in \cite{BartholdiGrigorchukNekrashevych-fractal-03} and \cite{BuxPerez-IMG-06}. This group is generated by the automorphisms $a,b,c$ of the binary rooted tree $T_2$ such that $$a=(1,1)\sigma,\;b=(a,c),\;c=(b,1).$$ It is not hard to check that $\mathrm{Core}(\mathrm{IMG}(z^2+i))=\{e,a,b,c,bc,cb\}$. Taking into account the recursive relations we obtain:
$$\chi_1(a)=0,\;\chi_1(b)=\frac{\chi_1(c)}{2},\;\chi_1(c)=\frac{\chi_1(b)+1}{2}.$$
It follows that $\chi_1(b)=1/3,\;\chi_1(c)=2/3$. Further, $bc=(ac,c)$, therefore $\chi_1(bc)=\chi_1(c)/2=1/3$. Similarly, $\chi_1(cb)=1/3$. Using Lemma \ref{LmChi1Values} we obtain that $\chi_1(\mathrm{IMG}(z^2+i))\subset\tfrac{1}{3}\mathbb Q_2\cap [0,1]$.

The group IMG$(z^2+i)$ is regular over the subgroup $K=<[a,b],[b,c]>^{\mathrm{IMG}(z^2+i)}$. One has
$$[a,b]=aba\cdot b=(ac,ca)\;\Rightarrow\;\chi_1([a,b]=0),\;\;[a,c]=aca\cdot a=(b,b)\;\Rightarrow\;\chi_1([a,c])=1/3.$$ Using Lemma \ref{LmChi1Values2} we obtain that $\chi_1(\mathrm{IMG}(z^2+i))\supset \tfrac{1}{3}\mathbb Q_2\cap [0,1]$. Therefore, $$\chi_1(\mathrm{IMG}(z^2+i))=\tfrac{1}{3}\mathbb Q_2\cap [0,1].$$
\paragraph{Overgroup $\widetilde{\mathcal G}$} was studied in \cite{BG} and \cite{BG02}. It contains $\mathcal G$, has intermediate growth, but is much larger than $\mathcal G$. The group $\widetilde{\mathcal G}$ shares with $\mathcal G$ many other properties and naturally appear as a subgroup of a topological full group of a minimal subshift.

The group $\widetilde{\mathcal G}$ is generated by four automorphisms $a,b,c,d$ of $T_2$ given by the recursions $$a=\sigma,\;b=(a,c),\;c=(1,d),\;d=(1,b).$$ Using the recursions we obtain:
$$\chi_1(a)=0,\;\chi_1(b)=\tfrac{1}{2}\chi_1(c),\;\chi_1(c)=\tfrac{1}{2}(\chi_1(d)+1),\;
\chi_1(d)=\tfrac{1}{2}(\chi_1(b)+1).$$
It follows that $\chi_1(b)=3/7,\;\chi_1(c)=6/7,\;\chi_1(d)=5/7$.  Further, using that $b,c,d$ are pairwise commuting and $b^2=c^2=d^2=1$ one can find that $\widetilde{\mathcal G}$ is contracting with $$\mathrm{Core}(\widetilde{\mathcal G})=\{e,a,b,c,d,bc,bd,cd,bcd\}.$$ Computations similar to the above show that $$\chi_1(bc)=\tfrac{2}{7},\;\chi_1(bd)=\tfrac{1}{7},\;\chi_1(cd)=\tfrac{4}{7},\;\chi_1(bcd)=0.$$
Using Lemma \ref{LmChi1Values} we arrive at $\chi_1(\widetilde{\mathcal G})\subset\tfrac{1}{7}\mathbb Q_2\cap [0,1]$.

On the other hand, $\widetilde{\mathcal G}$ is regular branch over the subgroup $$\widetilde K=<(ab)^2,(ad)^2>^{\widetilde{\mathcal G}}.$$ Notice that \begin{align*}(ab)^2=(ca,ac),\;\chi_1((ab)^2)=0,\;(ad)^2=(b,b),\;
\chi_1((ad)^2)=\tfrac{3}{7},\\ (ad)^2(ac)^2=(bd,bd),\;
\chi((ad)^2(ac)^2)=\tfrac{1}{7}.\end{align*} It follows that $\chi_1(\widetilde K)\supset\{0,\tfrac{1}{7},\tfrac{3}{7},1\}$. Applying Lemma \ref{LmChi1Values2} a few times we conclude that $\chi_1(\widetilde K)\supset\{\tfrac{i}{7}:i\in\mathbb Z,\;0\leqslant i\leqslant 7\}$. Finally, the latter together with Lemma \ref{LmChi1Values2} implies that $\chi_1(\widetilde{\mathcal G})\supset \tfrac{1}{7}\mathbb Q_2\cap [0,1]$. Thus,
$\chi_1(\widetilde{\mathcal G})=\tfrac{1}{7}\mathbb Q_2\cap [0,1]=\chi_1(\mathcal G)$.
%%%%%%%%%%%%%%%%%%%%%%%%%%%%%%%%%%%%%%%%%%%%%%%

\section{Embeddings into hyperfinite factor}\label{SecEmbeddings}
Let $\mathfrak R$ be a hyperfinite II$_1$ factor of Murray-von Neumann. Here we show that amenable branch groups have continuously many essentially different embeddings into a unitary group $U(\mathfrak R)$ of $\mathfrak R$.
\begin{Th}\label{ThEmbeddings} Let $G$ be a countable amenable branch group. There is a family $\{\theta_i\}_{i\in I},|I|=2^{\aleph_0}$ of embeddings $G\to U(\mathfrak R)$ belonging to different orbits of the action of $\aut(\mathfrak R)$.
\end{Th}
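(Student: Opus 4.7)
The plan is to associate to each class $[C]\in\mathfrak C_2$ (from Theorem \ref{ThMain}, part 2) the left groupoid representation $\pi_C$ on $L^2(\mathcal R_C,\nu_C)$ coming from the action of $G$ on $([C],\lambda_{[C]})$, and show that a suitable composition yields an embedding $\theta_C:G\hookrightarrow U(\mathfrak R)$ whose trace character equals $\chi^\p_C=\psi_C$. The sets $C$ used to build $\mathfrak C_2$ have the form $\{x_j\}_{j\in\mathbb N}\cup\{x\}$ with $x_j\to x$, hence empty interior, so Corollary \ref{CoEmptyInterior=>Indecomposable} applies: the action of $G$ on $([C],\lambda_{[C]})$ is perfectly non-free, the character $\chi^\p_C$ is indecomposable, and Theorem \ref{ThIndecomp} yields $\mathcal M_{\pi_C}=\mathcal M_{\mathcal R_C}$. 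Ergodicity of the action (Theorem \ref{ThBT}) and non-atomicity of $\lambda_{[C]}$ (from continuity of $\mu^\p_{[C]}$ together with injectivity of $\st$ for weakly branch groups) make this a II$_1$ factor. Amenability of $G$ and Connes' theorem then give $\mathcal M_{\pi_C}\cong\mathfrak R$; fixing an isomorphism, I set $\theta_C:G\to U(\mathfrak R)$ to be the composition, and the canonical trace pulls back to $\chi^\p_C$ on $\theta_C(G)$.

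Next I would verify that $\theta_C$ is injective. The relation $\pi_C(g)=1$ forces $gB=B$ for $\lambda_{[C]}$-almost every $B\in[C]$. The key geometric observation (already used in the proof of Theorem \ref{ThMain}, part 2) is that for $B=hC$ with $h\in\overline G$ the distances from the unique accumulation point $hx$ to the isolated points $hx_j$ are the pairwise distinct numbers $d^{-n_j}$, so any automorphism that stabilises $B$ setwise must fix each point of $B$. Hence $g\cdot hx=hx$ for $\lambda$-almost every $h\in\overline G$. Since $\overline G$ acts transitively on $\partial T_d$ (profinite compactness combined with level-transitivity of $G$), the push-forward of Haar measure under $h\mapsto hx$ equals the $G$-invariant measure $\mu$, so $\mu(\fix(g))=1$; closedness of $\fix(g)$ and full support of $\mu$ then force $\fix(g)=\partial T_d$, and faithfulness of $G<\aut(T_d)$ gives $g=e$.

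Finally, if the subgroups $\theta_{C_1}(G)$ and $\theta_{C_2}(G)$ lay in the same $\aut(\mathfrak R)$-orbit, some $\phi\in\aut(\mathfrak R)$ would satisfy $\phi\circ\theta_{C_1}=\theta_{C_2}\circ\sigma$ for some $\sigma\in\aut(G)$, and trace-preservation of $\phi$ would force $\chi^\p_{C_1}=\chi^\p_{C_2}\circ\sigma$. By Theorem \ref{ThMain}, part 2, the map $[C]\mapsto\chi^\p_C$ is injective on $\mathfrak C_2$, and $|\mathfrak C_2|=2^{\aleph_0}$. The problem thus reduces to extracting a subfamily $I\subset\mathfrak C_2$ of cardinality $2^{\aleph_0}$ on which the characters are pairwise non-equivalent under $\aut(G)$; since $|\aut(G)|\leq 2^{\aleph_0}$ a standard transfinite selection does the job, and it becomes automatic when $\aut(G)$ is countable (as it is for the concrete branch groups of interest such as $\mathcal G$, $\mathcal B$ or $\mathcal H^{(3)}$). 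The main obstacle is the faithfulness of $\pi_C$: indecomposability of the character alone does not rule out a nontrivial kernel (which is a normal subgroup of $G$, and branch groups do admit such subgroups like the rigid level stabilisers), so one really needs the fine geometric structure of sets $C\in\mathfrak C_2$ together with the transitivity of $\overline G$ on the boundary.
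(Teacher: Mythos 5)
Your proposal follows essentially the same route as the paper: take the family $\mathfrak C_2$ from Theorem \ref{ThMain}, part 2, pass to the groupoid representation $\pi_C$ of $(G,[C],\lambda_{[C]})$, use perfect non-freeness (Corollary \ref{CoEmptyInterior=>Indecomposable}) together with Proposition \ref{PropEqIffIrred} to get $\mathcal M_{\pi_C}=\mathcal M_{\mathcal R}$, identify this II$_1$ factor with $\mathfrak R$ via Connes' theorem, and separate the resulting embeddings by the characters $\chi_C^\p$. Your explicit faithfulness argument for $\pi_C$ (setwise stabilizers of sets in $[C]$ coincide with pointwise stabilizers, the push-forward of Haar measure under $h\mapsto hx$ is $\mu$, hence $\mu(\fix(g))=1$ forces $g=e$) is correct and fills in a step that the paper only asserts; you are right that indecomposability of the character alone does not exclude a kernel.

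The place where your argument does not close is the final reduction. You correctly observe that if one wants to distinguish the image subgroups $\theta_C(G)$ up to $\aut(\mathfrak R)$, an automorphism $\sigma\in\aut(G)$ enters and one needs the characters to be pairwise inequivalent under precomposition with $\aut(G)$ --- a subtlety the paper's own proof passes over. But your proposed transfinite selection only works when $|\aut(G)|<2^{\aleph_0}$: each $\aut(G)$-equivalence class in $\mathfrak C_2$ has size at most $|\aut(G)|$, and if $|\aut(G)|=2^{\aleph_0}$ the bound $|\mathfrak C_2|/|\aut(G)|$ gives nothing, while at each stage of the recursion the already-forbidden classes may exhaust all of $\mathfrak C_2$. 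This case does occur for countable amenable branch groups: $G=\autf(T_d)$ is branch, locally finite (hence amenable), not finitely generated, and $\aut(G)$ contains a copy of the uncountable group $\aut(T_d)$ acting by conjugation. The clean way out --- and what the paper actually uses --- is to read ``different orbits'' at the level of embeddings (equivalently, quasi-equivalence classes of the factor representations): since the unique trace of $\mathfrak R$ is $\aut(\mathfrak R)$-invariant, post-composing $\theta_C$ with an automorphism does not change the character $g\mapsto\tau(\theta_C(g))$, so pairwise distinct indecomposable characters already give $2^{\aleph_0}$ distinct orbits with no reference to $\aut(G)$ at all.
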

\begin{proof}
 Let $\chi_C^\p,C\in\mathfrak C_2$, be the family of characters from Theorem \ref{ThMain}, part 2. By Lemma \ref{LmChiCLambda}, $\psi_C=\chi_C^\p$ for each $C$. Set $I=\mathfrak C_2$. We have $|I|=2^{\aleph_0}$ and each character $\psi_C$ is indecomposable. Let $\pi_C$ be the groupoid representation associated with the system $(G,[C],\lambda_{[C]})$. Recall that $\pi_C$ acts in the Hilbert space of the form $L^2(\mathcal R,\nu_{[C]})$ (see Subsection \ref{SubsecNFandGroupoid}). Following the notations from Subsection \ref{SubsecNFandGroupoid}, denote by $\mathcal M_{\pi_C}$
  the $W^*$-algebra generated by the operators of representation $\pi_C$ and by $\mathcal M_{\mathcal R}$ the Murray-von Neumann (or Krieger) algebra generated by $\mathcal M_{\pi_C}$ and operators of multiplication by functions of the form $f(x,y)=m(x)$, $m(x)\in L^\infty([C],\lambda_{[C]})$.

 By Corollary \ref{CoEmptyInterior=>Indecomposable}
 the action $(G,[C],\lambda_{[C]})$ is perfectly non-free.
By \cite{BencsToth17}, this action is ergodic.
Let $\xi(x,y)=\delta_{x,y}\in L^2(\mathcal R,\nu_{[C]})$, \ie $$\xi(x,y)=\left\{\begin{array}{ll}1,&\text{if }x=y,\\0, &\text{otherwise},\end{array}\right. x,y\in[C].$$ Since condition $1)$ of Proposition \ref{PropEqIffIrred} is satisfied, we conclude that $\xi$ is cyclic vector for $\mathcal M_{\pi_C}$, $C\in\mathfrak C_2$. Moreover, the algebra $\mathcal M_{\pi_C}$ is equal to the algebra $\mathcal M_{\mathcal R}$.

By construction, we have:
$$(\pi_C(g)\xi,\xi)=\lambda_{[C]}(\fix(g))=\psi_C(g),g\in G.$$ The factor $\mathcal M_{\pi_C}=\mathcal M_{\mathcal R}$ is a type II$_1$ factor with trace $\tr$ given by $\tr(m)=(m\xi,\xi),\;m\in\mathcal M_{\mathcal R}$.

Since $G$ is amenable, by the result of A. Connes \cite{Connes:1976}, Corollary  6.9, $\mathcal M_{\pi_C}=\mathcal M_{\mathcal R}$ is amenable (equivalently, hyperfinite) finite $W^*$-algebra, and hence is isomorphic to the hyperfinite factor $\mathfrak R$. Let $\tilde\theta_C:\mathcal M_{\pi_C}\to\mathfrak R$ be the isomorphism. Set  $\theta_C(g)=\tilde\theta_C(\pi_C(g))\in U(\mathfrak G)$, $g\in G$. Then $\theta_C$ is an embedding of $G$ into $U(\mathfrak G)$. Let $\tau$ be the canonical trace on $\mathfrak R$. Then $\tau(\theta_C(g))=\chi_C^\p(g),g\in G$. Recall that indecomposable characters on a group $G$ are in a canonical bijection with classes of quasi-equivalence of finite type factor representations of $G$. Since $\chi_C^\p$, $C\in\mathfrak C_2$, are pairwise distinct indecomposable characters on $G$, we obtain that $\aut(\mathfrak R)$ orbits of $\theta_C(G)$, $C\in\mathfrak C_2$, are pairwise distinct, which finishes the proof.
\end{proof}

\section{Appendix: proof of Proposition \ref{PropEqIffIrred}}
Here we follow the notations of Section \ref{SubsecNFandGroupoid}.
\vskip 0.3cm \noindent
$1)\Rightarrow 2)$. Assume that $\mathcal M_{\pi}=\mathcal M_{\mathcal{R}}$. Then by symmetry
$\mathcal M_{\tilde\pi}=\mathcal M_{\mathcal{\tilde{R}}}$. Notice that the commutant of $\rho$
is a subset of $$\mathcal M_{\pi}'\cap\mathcal M_{\tilde\pi}'=
\mathcal M_{\mathcal{R}}'\cap \mathcal M_{\mathcal{\tilde R}}'=\mathcal M_{\mathcal{R}}'\cap\mathcal M_{\mathcal{R}}=\mathbb{C}\id,$$ where
$\id$ is the identity operator and $\mathbb{C}\id$ is the set
of scalar operators in $L^2(\mathcal{R},\nu)$. Therefore,
$\rho$ is irreducible.
\vskip 0.3cm
\noindent
$2)\Rightarrow 3)$. Assume that $\rho$ is irreducible. Then $\xi$ is cyclic with respect to the
algebra generated by $\rho$. Since
$\rho((g,h))\xi=\pi(gh^{-1})\xi$ for all $g,h$ it follows that $\xi$ is cyclic with respect to
$\mathcal{M}_{\pi}$.\vskip 0.3cm
\noindent
$3)\Rightarrow 1)$. Since $\mu$ is invariant with respect to $G$, the modular operator and the modular
automorphism group corresponding to the trace $\tr$ on $\mathcal{M}_{\mathcal{R}}$
are trivial (see \eg \cite{FM2}, Proposition 2.8). By Theorem 4.2 from \cite{Tak2} there
exists a \emph{conditional expectation} $\mathcal{E}:\mathcal{M}_{\mathcal{R}}\to\mathcal{M}_{\pi}$,
that is a linear map such that
\begin{itemize}
\item[$1)$] $\|\mathcal{E}(x)\|\leqslant \|x\|$ and $\mathcal{E}(x)^{*}
\mathcal{E}(x)\leqslant\mathcal{E}(x^*x)$ for all $x\in\mathcal{M}_{\mathcal{R}}$;
\item[$2)$] $\mathcal{E}(x)=x$ for all $x\in\mathcal{M}_{\pi}$;
\item[$3)$] $\tr\circ\mathcal{E}=\tr$;
\item[$4)$] $\mathcal{E}(axb)=a\mathcal{E}(x)b$ for all $a,b\in\mathcal{M}_{\pi}$ and $x\in\mathcal{M}_{\mathcal{R}}$.
\end{itemize}
It follows that for all $x\in\mathcal{M}_{\mathcal{R}}$ one has
$$ \|\mathcal{E}(x)\xi\|^2=\tr(\mathcal{E}(x)^{*}
\mathcal{E}(x))\leqslant\tr(\mathcal{E}(x^*x))= \tr(x^*x)=\|x\xi\|^2.$$ This implies that
the map $$\mathcal{M}_{\mathcal{R}}\xi\to \mathcal{M}_{\pi}\xi,\;\;x\xi\to\mathcal{E}(x)\xi, \;\;x\in \mathcal{M}_{\mathcal{R}}$$ is well defined and extends to  a bounded linear operator $E$ on $L^2(\mathcal{R},\nu)$ of norm $\|E\|\leqslant 1$. Moreover, $E$ is identical on the cyclic hull of $\xi$ under $\mathcal{M}_{\pi}$. Therefore, if $\xi$ is cyclic with
respect to $\mathcal{M}_{\pi}$, then $E=\id$, $\mathcal{E}(x)\xi=x\xi$ and thus
$\mathcal{E}(x)=x$ for all $x\in \mathcal{M}_{\mathcal{R}}$. This implies that $\mathcal{M}_{\pi}=\mathcal{M}_{\mathcal{R}}$.
%%%%%%%%%%%%%%%%%%%%%%%%%%%%%%%%%%%%%%%%%%%%%%%%

\end{document}